\documentclass[11pt]{amsart}

\usepackage[utf8x]{inputenc}
\usepackage[colorlinks=false, linktocpage=true]{hyperref}
\usepackage{fullpage}
\usepackage{color}
\usepackage{longtable}
\usepackage{amscd,amssymb,amsthm}
\usepackage[all]{xy}

\usepackage{tikz-cd}

\usepackage{mathrsfs}
\usepackage{txfonts}

\allowdisplaybreaks[4]

\numberwithin{equation}{section}

\newcommand{\bC}{\mathbb{C}}

\newcommand{\bQ}{\mathbb{Q}}
\newcommand{\bR}{\mathbb{R}}
\newcommand{\bV}{\mathbb{V}}
\newcommand{\bZ}{\mathbb{Z}}

\newcommand{\C}{\mathbb{C}}
\newcommand{\R}{\mathbb{R}}
\newcommand{\Z}{\mathbb{Z}}

\newcommand{\Q}{\mathbb{Q}}

\newcommand{\cH}{\mathcal{H}}

\newcommand{\cN}{\mathcal{N}}

\newcommand{\cT}{\mathcal{T}}

\newcommand{\Hom}{\mathrm{Hom}}

\renewcommand{\deg}{\mathrm{deg}}
\newcommand{\Pic}{\mathrm{Pic}}

\newcommand{\rank}{\mathrm{rank}}

\newcommand{\Ad}{\mathrm{Ad}}
\newcommand{\ad}{\mathrm{ad}}

\newcommand{\Spec}{\mathrm{Spec}}
\newcommand{\Mor}{\mathrm{Mor}}

\newcommand{\Lie}{\mathrm{Lie}}
\newcommand{\diag}{\mathrm{diag}} 
\newcommand{\ab}{ {\mathrm{ab}}  } 

\newcommand{\U}{\mathbf{U}}
\renewcommand{\O}{\mathbf{O}}
\newcommand{\GL}{\mathbf{GL}}
\newcommand{\SL}{\mathbf{SL}}

\newcommand{\SO}{\mathbf{SO}}
\newcommand{\Sp}{\mathbf{Sp}}

\newcommand{\cC}{\mathscr{C}}
\newcommand{\cG}{\mathscr{G}}
\newcommand{\cP}{\mathscr{P}}

\newcommand{\fP}{\mathfrak{P}}
\newcommand{\fT}{\mathfrak{T}}
\newcommand{\fa}{\mathfrak{a}}
\newcommand{\fg}{\mathfrak{g}}
\newcommand{\fh}{\mathfrak{h}}

\newcommand{\ft}{\mathfrak{t}}
\newcommand{\fb}{\mathfrak{b}}
\newcommand{\fp}{\mathfrak{p}}

\newcommand{\fu}{\mathfrak{u}} 
\newcommand{\fz}{\mathfrak{z}}
\newcommand{\fl}{\mathfrak{l}}
\newcommand{\fM}{\mathfrak{M}}
\newcommand{\fBun}{\mathfrak{Bun}}

\newcommand{\ep}{\varepsilon}

\renewcommand{\phi}{\varphi}
\newcommand{\Ga}{\Gamma}

\newcommand{\tal}{ {\tilde{\alpha}} }

\newcommand{\hLambda}{\widehat{\Lambda}}

\newcommand{\ModXrd}{\mathscr{M}_X^{r,d}}
\newcommand{\ModXrL}{\mathscr{M}_X^{r,\mathscr{L}}}
\newcommand{\ModGXdss}{\mathscr{M}_{G_X}^{\,d,ss}}
\newcommand{\ModGXdst}{\mathscr{M}_{G_X}^{\,d,s}}

\newcommand{\CP}{\mathbb{C}\mathbf{P}}

\newcommand{\ov}[1]{\overline{#1}}

\newcommand{\lra}{\longrightarrow}
\newcommand{\lmt}{\longmapsto}

\newcommand{\Ker}{\mathrm{Ker}\,}
\renewcommand{\Im}{\mathrm{Im}\,}

\newcommand{\piG}{\pi_1G}
\newcommand{\piL}{\pi_1L}
\newcommand{\piH}{\pi_1H}
\newcommand{\BunGX}{\fBun_{\, G_X}}

\newcommand{\BunGXd}{\fBun_{\, G_X}^{\ d}}
\newcommand{\BunGXdss}{\fBun_{\, G_X}^{\,d,ss}}
\newcommand{\BunGXdst}{\fBun_{\, G_X}^{\,d,s}}
\newcommand{\BunLXmuss}{\fBun_{\, L_X}^{\,\ud_\umu,ss}}
\newcommand{\LGd}{\,I(G,d)}

\newcommand{\Bmu}{\fBun_{\nu}}
\newcommand{\Bnu}{\fBun_{\,\nu}}
\newcommand{\ud}{\delta}
\newcommand{\umu}{\mu}
\newcommand{\Blnu}{\fBun_{\,\leq\nu}}
\newcommand{\Bslnu}{\fBun_{\,<\nu}}
\newcommand{\Sch}{(\mathrm{Sch}/\bC)}
\newcommand{\Gp}{\mathrm{Groupoids}}

\renewcommand{\geq}{\geqslant}
\renewcommand{\leq}{\leqslant}
\renewcommand{\i}{\sqrt{-1}}

\theoremstyle{plain}
\newtheorem{dummy}{dummy}[section]
\newtheorem{lemma}[dummy]{Lemma}
\newtheorem{theorem}[dummy]{Theorem}

\newtheorem{corollary}[dummy]{Corollary}
\newtheorem{proposition}[dummy]{Proposition}
\newtheorem{example}[dummy]{Example}

\theoremstyle{definition}
\newtheorem{definition}[dummy]{Definition}

\newtheorem*{ack}{Acknowledgments}

\newtheorem{remark}[dummy]{Remark}

\theoremstyle{plain}
\newtheorem{thm}{Theorem}

\begin{document}

\title[Hodge numbers of moduli  of $G$-bundles]{Hodge numbers of moduli of principal bundles on a curve}
\author{Chiu-Chu Melissa Liu}
\address{Chiu-Chu Melissa Liu, Department of Mathematics, Columbia University, New York City, NY, USA.}
\email{ccliu@math.columbia.edu}
\author{Florent Schaffhauser}
\address{Florent Schaffhauser, Department of Mathematics, Heidelberg University, Germany}
\email{fschaffhauser@mathi.uni-heidelberg.de}

\subjclass[2020]{14D23,14C30}
\keywords{Stacks and moduli problems, Hodge numbers}

\date{\today}

\begin{abstract}
We prove an inversion theorem for recursive formulas satisfied by certain families of converging power series in two variables. These power series are indexed by the Harder-Narasimhan types of principal $G$-bundles of degree $d \in \piG$ on a smooth projective curve $X$, where $G$ is a connected complex reductive group. As an application, we obtain a closed formula for the Hodge-Poincaré series of moduli stacks of semistable principal $G$-bundles of degree $d$. We also compute the variation of Hodge structure of the moduli stack of all principal $G$-bundles over $X$, as a function of the period matrix of that curve.
\end{abstract}

\maketitle

\tableofcontents

\section{Introduction and main results}

Let $X$ be a smooth complex projective curve of genus $g \geqslant 2$ and let $G$ be a connected complex reductive group. Moduli spaces of principal $G$-bundles on $X$ were constructed by Ramanathan in \cite{Ramanathan}. For every $d \in \piG$, there exists a connected moduli space of semistable principal $G$-bundles, which is a projective variety of dimension $(g-1) \dim(G) + \dim Z_G$, where $Z_G$ is the center of $G$, and whose points are in bijection with $S$-equivalence classes of such bundles. Ramanathan's semistability condition generalizes Mumford's condition for vector bundles on a curve (see \cite{Mumford_ICM}), for which Narasimhan and Seshadri constructed a moduli space  in \cite{NS}. 

\smallskip

The Betti numbers of moduli spaces of semistable principal $G$-bundles were computed by Atiyah and Bott in \cite{AB}. More precisely, they computed the Poincaré series of moduli stacks of such bundles, as well as the Poincaré series of the moduli stack of all principal $G$-bundles, using a gauge-theoretic approach. For curves defined over finite fields and in the case when $G = \GL_n$, a formula for these Poincaré series had been obtained earlier by Desale and Ramanan in \cite{DeRa}, following the number-theoretic approach of Harder and Narasimhan \cite{HN}. The Harder-Narasimhan reduction of a principal bundle plays a central role in the Atiyah-Bott approach (see Section \ref{HN_stratif_section} for a review) and the formula for the Poincaré polynomial obtained by Atiyah and Bott is recursive: to compute the Betti numbers of moduli stacks of semistable principal $G$-bundles, one needs to know the Betti numbers of moduli stacks of semistable principal $L$-bundles, where $L$ ranges over Levi subgroups of $G$, hence is a reductive of smaller dimension than $G$. In the case when $G = \GL_n$, Zagier \cite{Zagier} was able to invert the recursion and obtain a closed formula for the Poincaré series of moduli stacks of semistable principal $G$-bundles of rank $r$ and degree $d \in \Z$. When $r$ and $d$ are coprime, one can deduce recursive and closed formulas for the Poincaré polynomials of the coarse moduli spaces of those stacks, which is the moduli space of semistable vector bundles constructed by Narasimhan and Seshadri.

\smallskip

Going back to the case of principal bundles, Laumon and Rapoport found a completely general way to invert the Atiyah-Bott recursive formula, valid for all reductive groups. The formula given in \cite{LR} holds for all connected reductive groupes $G$ such that $[G,G]$ is simply connected, and this restriction was lifted by Ho and Liu in \cite{Ho_Liu_YM_AMS}. In \cite{Teleman}, Teleman computed the Hodge numbers of moduli stacks of principal $G$-bundles for all semisimple group $G$ and, in the case when $G = \GL_n$, Earl and Kirwan obtained a recursive formula for the Hodge-Poincaré series of moduli stacks of semistable vector bundles in \cite{Earl_Kirwan}. Our goal in this paper is to generalize the above in two directions and obtain the following results:
\begin{enumerate}
	\item A recursive formula for Hodge-Poincaré series of moduli stacks of semistable principal $G$-bundles.
	\item An inversion theorem for such recursive formulas, resulting in a closed formula for Hodge-Poincaré series of moduli stacks of semistable principal $G$-bundles (Theorem \ref{thm:HP}). 
\end{enumerate}
We now state our main result. 
%
\begin{thm}[Theorem \ref{thm:HP}] \label{thm:HP_intro}
	For all $d\in\piG$, the Hodge-Poincaré series of the moduli stack $\BunGXdss$ is equal to the expansion in $\Z[\![u,v]\!]$ of the element of $\Q(u,v)$ defined by the right-hand side of the following formula:
	$$
	HP_{u,v}(\BunGXdss)  = 
	\sum_{I\subseteq \Delta}
	(-1)^{\dim Z(L^I)-\dim Z_G}
	a_{u,v}(L^I_X)
	\frac{(uv)^{(g-1)\dim U^I}}
	{\prod_{\alpha\in I}
		\big(1-(uv)^{2\varrho^I(\alpha^\vee)}\big)}\cdot
	(uv)^{2\sum_{\alpha\in I}\varrho^I(\alpha^\vee)
		\langle \varpi_\alpha(d)\rangle}
	$$
	where, for all subset $I\subset \Delta$ of the set of simple roots of $G$, we denote by:
	\begin{itemize}
		\item $P^I$ the parabolic subgroup of $G$ associated to $I\subset\Delta$, 
		\item $L^I$ the Levi quotient of $P^I$,
		\item $U^I$ the unipotent radical of $P^I$, 
		\item $a_{u,v}(L^I_X)$ the Hodge-Poincaré series 
		$$
		a_{uv}(L^I_X) =  \left(\frac{ (1+u)^g(1+v)^g }{1-uv} \right)^{\dim Z(L^I)} \prod_{k=\dim Z(L^I)+1}^{\mathrm{rk}(L^I)}
		\frac{(1+u^{d_k(L^I)} v^{d_k(L^I)-1})^g (1+u^{d_k(L^I)-1} v^{d_k(L^I)})^g}{\big(1- (uv)^{d_k(L^I)-1}\big)\big(1-(uv)^{d_k(L^I)}\big)}\,.
		$$
		of the moduli stack of all principal $L^I$-bundles on $X$ of a fixed degree $\delta\in \pi_1 L_I$, as calculated in Theorem \ref{thm:Hodge},
		\item $\varrho^I\in\fh^*$ the linear form on the Cartan sub-algebra $\fh\subset \fg$ defined by 
		$$
		\varrho^I\quad =\quad \frac{1}{2}\sum_{\tiny \begin{array}{c}\beta\in R_+\ |\ \exists\,\alpha\in I\\
				\langle \beta,\alpha^\vee \rangle >0\end{array}} \beta
		$$
	\end{itemize}
	and, moreover,
	\begin{itemize}
		\item $\varpi_\alpha(d)\in \bQ/\bZ$ is the image of $d$, as defined in Diagram \eqref{diagram_with_fund_weights}, ,
		\item and $\langle x\rangle \in\bQ$ is the unique representative of the class $x\in \bQ/\bZ$ such that $0< \langle x\rangle\leq 1$.
	\end{itemize}
\end{thm}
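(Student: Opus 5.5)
We follow the Atiyah--Bott / Laumon--Rapoport strategy in Hodge-theoretic form. The first step is to establish the recursive formula coming from the Harder--Narasimhan stratification of $\BunGXd$. The strata $\Bnu$ are indexed by HN types $\nu$: pairs consisting of a standard parabolic $P^I$ and a type in the dominant chamber of $\fa_I$ compatible with $d$. Each stratum is a vector-bundle stack over $\BunLXmuss$ for the Levi $L^I$, of codimension $\sum$ over roots of $U^I$ of $(\langle\beta,\nu\rangle + g-1)$. By purity of the mixed Hodge structures and the Gysin sequences, which split because the stratification is equivariantly perfect (Atiyah--Bott), we obtain
$$
HP_{u,v}(\BunGXd)=\sum_{\nu} (uv)^{\mathrm{codim}\,\Bnu}\, HP_{u,v}(\fBun^{\,\ud_\nu,ss}_{L_X}),
$$
with the term for $\nu$ semistable equal to $HP_{u,v}(\BunGXdss)$. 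By Theorem \ref{thm:Hodge}, the left-hand side equals $a_{u,v}(G_X)$, and it is independent of $d$. Convergence in $\Z[\![u,v]\!]$ holds because the codimensions go to infinity.

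The second step is the inversion. We argue by induction on the semisimple rank and apply the same identity to every Levi $L^I$, so that $HP(\fBun_{L^I}^{ss})$ is expressed through $a_{u,v}(L^J)$ for $J\subseteq I$. Substituting into the recursion gives a sum over chains $J\subseteq I\subseteq\Delta$ of lattice sums over HN types. Following Laumon--Rapoport, we reorganize these as sums over cones of $\fa_J$ with signs $(-1)^{|I\setminus J|}$. The key combinatorial input is Langlands' lemma (the alternating sum of characteristic functions of chambers and obtuse cones equals the function supported at the origin). Applying it, all terms collapse to a single sum over $I$ of $a_{u,v}(L^I_X)$ times a geometric series over the lattice points of an open simplicial cone.

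The third step evaluates that geometric series. The HN types for $P^I$ with degree compatible with $d$ are the points $x=\sum_{\alpha\in I}x_\alpha\varpi_\alpha^\vee$ (in suitable coordinates) with $x_\alpha\in\varpi_\alpha(d)+\Z$ and $x_\alpha>0$, and the codimension is linear in $x$ with slope $2\varrho^I(\alpha^\vee)$. Summing each coordinate gives $(uv)^{2\varrho^I(\alpha^\vee)\langle\varpi_\alpha(d)\rangle}/(1-(uv)^{2\varrho^I(\alpha^\vee)})$. The constant part gives $(uv)^{(g-1)\dim U^I}$, and the sign is $(-1)^{|I|}=(-1)^{\dim Z(L^I)-\dim Z_G}$.

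We expect the main obstacle to be the situation where $[G,G]$ is not simply connected. There the lattice of degrees of $L^I$ mapping to $d$ is not a product lattice, so $\varpi_\alpha(d)$ has to be read through Diagram \eqref{diagram_with_fund_weights} in $\bQ/\bZ$, and the Langlands-lemma cancellation must be checked on cosets rather than on lattices. Our first approach would be to adapt Ho--Liu's argument, passing to a $z$-extension $\tilde G$ with simply connected derived group. The Hodge series $a_{u,v}$ then change by a computable factor coming from the central torus, and the degree $d$ lifts compatibly with the maps $\varpi_\alpha$.
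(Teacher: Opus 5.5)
Your outline follows the paper's route. It has the same three ingredients: a Hodge-refined Atiyah--Bott recursion (Theorem \ref{perfection_thm} and \eqref{recursive_formula_for_HP_series}); the Laumon--Rapoport inversion based on Langlands' combinatorial lemma, which the paper imports as Theorems \ref{thm:inversionI} and \ref{thm:inversionII}; and a geometric-series evaluation over a simplicial cone, given by the lemma and corollary preceding Theorem \ref{thm:inversionII}.

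The real divergence is how you treat non-simply-connected $[G,G]$. The paper uses no $z$-extension. It runs the inversion on the set $\fT$ of pairs $(P,\nu_P)$, where $\nu_P\in\Lambda^P_{P_0}\simeq\pi_1(L)$ is a \emph{topological type} rather than a slope. With this indexing the obstacle you anticipate does not arise:
\begin{itemize}
\item the degrees $\delta\in\pi_1 L^I$ with $j_{L^I}(\delta)=d$ form a torsor under $\Lambda/\Lambda_I\simeq\bigoplus_{\alpha\in I}\bZ\alpha^\vee$;
\item since $\varpi_\alpha$ vanishes on $\Lambda_I$ for $\alpha\in I$, the map $\delta\mapsto(\varpi_\alpha(\delta))_{\alpha\in I}$ identifies this set with the product coset $\prod_{\alpha\in I}(\varpi_\alpha(d)+\bZ)$;
\item a non-product lattice appears only if you sum over all slopes in $X_*(A'_P)$ lying over $\nu'_G$, as in Laumon--Rapoport's $\fP$, which mixes several degrees with the same image in $\pi_1(G^{\ab})$.
\end{itemize}

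Your $z$-extension $1\to Z\to\tilde G\to G\to 1$ can also be made to work, and it lets you quote Laumon--Rapoport verbatim. But it needs an argument you have not supplied.
\begin{itemize}
\item Since semistability is detected on the induced $G$-bundle, $\fBun^{\tilde d,ss}_{\tilde G_X}\to\BunGXdss$ is a torsor under $\fBun^0_{Z_X}$.
\item You need Leray--Hirsch, compatibly with Hodge structures, to get $HP_{u,v}(\fBun^{\tilde d,ss}_{\tilde G_X})=HP_{u,v}(\BunGXdss)\big((1+u)^g(1+v)^g/(1-uv)\big)^{\dim Z}$.
\item This does hold: the K\"unneth components of $c_1$ of the line bundles $\chi(U)$, $\chi\in\widehat{\tilde G}$, restrict to generators of $H^*(\fBun^0_{Z_X};\bQ)$, because $Z\cap[\tilde G,\tilde G]$ is finite.
\item Combine this with $a_{u,v}(\tilde L^I_X)=a_{u,v}(L^I_X)\big((1+u)^g(1+v)^g/(1-uv)\big)^{\dim Z}$, $\dim Z(\tilde L^I)=\dim Z(L^I)+\dim Z$ and $\varpi_\alpha(\tilde d)=\varpi_\alpha(d)$. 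The formula then transfers from $\tilde G$ to $G$.
\end{itemize}

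One correction to your third step: the lattice points summed there are not the HN types. After Langlands' lemma, what survives for each $I$ is a sum over the support of $\widehat\tau$, the \emph{obtuse} cone $\{\varpi_\alpha(x)>0,\ \alpha\in I\}$. The summand is $(uv)$ raised to the linear extension $2\varrho^I(x)+(g-1)\dim U^I$ of the codimension.

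The coordinates must therefore be $x=\sum_{\alpha\in I}x_\alpha\alpha^\vee$ (projected coroots). Then $x_\alpha=\varpi_\alpha(x)\in\varpi_\alpha(d)+\bZ$, and the coefficient of $x_\alpha$ is $2\varrho^I(\alpha^\vee)$.

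In the fundamental-coweight coordinates you wrote, things go wrong. The condition $x_\alpha>0$ describes the acute chamber $\{\alpha(x)>0\}$ of HN types, and the congruence $x_\alpha\in\varpi_\alpha(d)+\bZ$ fails. The coefficient would be $2\varrho^I(\varpi^\vee_\alpha)$. Summing over HN types would just reproduce the terms of the recursion rather than invert it. Your final expression is nonetheless the correct one.
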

As we shall see in Section \ref{proof_of_inversion}, the proof of Theorem \ref{thm:HP_intro} is a refinement of the proof of the inversion theorem in one variable given in \cite[Appendix A]{Ho_Liu_YM_AMS}, generalizing it to the case of convergent series of two variables. Here is an outline of the paper.

\begin{itemize}
	\item In Section \ref{Hodge_theory_for_stacks}, we review the Hodge theory of stacks, following Totaro \cite{Totaro} and Kubrak-Prikhodko \cite{Kubrak_Prikhodko}, which slightly generalizes Teleman's result (to the reductive case), via a different approach that also enables us to compute the variation of Hodge structure of the moduli stack of all principal $G$-bundles as function of a variation of Hodge structure on $X$ (Corollary \ref{cor:VHS}).
	\item In Section \ref{recursive_formulae_section}, we obtain a recursive formula for the Hodge-Poincaré series of moduli stacks of semistable principal bundles. The main result of that section is Theorem \ref{perfection_thm}, which enables us to spell out the recursive formula in Equation \eqref{recursive_formula_for_HP_series}.
	\item In Section \ref{inversion_section}, we prove Theorem \ref{thm:HP_intro} and specialize it to the case of classical groups. The main technical step towards Theorem \ref{thm:HP_intro} is Theorem \ref{thm:inversionII}, a general inversion formula
	for a $\bZ[\![ u,v ]\!]$-module $A$ which is complete with respect to the $\bZ[\![u,v]\!]$-adic topology. This is a refined version of the inversion formula \cite[Theorem A.6]{Ho_Liu_YM_AMS} (which is a slightly modified version of \cite[Theorem 2.4]{LR}) for
	a $\bZ[\![ t ]\!]$-module $A$ which is complete with respect to the $\bZ[\![t]\!]$-adic topology.  We then apply Theorem \ref{thm:inversionII} to invert the recursive formula for the Hodge Poincar\'{e} series  (Theorem \ref{thm:recursion}, which is equivalent to 
	Theorem \ref{perfection_thm}) to obtain the closed formula stated in Theorem \ref{thm:inversionIII}, which is equivalent to our main result Theorem \ref{thm:HP_intro}. 
	The general formula of Theorem \ref{thm:HP_intro} is then specialized to the case of the classical groups in Section \ref{section:classical_groups}.

	\item In Section \ref{applications}, we derive from our main result a closed formula for the Hodge-Poincaré series of coarse moduli spaces of moduli stacks of semistable principal $G$-bundles of degree $d$, under the assumption that all semistable such bundles are in fact stable. When $G=\GL_n$, this is equivalent to the assumption that $r$ and $d$ are coprime. We note in Remark \ref{good_case} that this so-called good case cannot occur when $G$ is simply connected. We also give a few applications of our results to the vector bundle case (e.g. Theorem \ref{HP_pol_mod_spaces_of_vector_bundles}).
\end{itemize}

\begin{ack}
C.-C. M. Liu wishes to thank Kai Behrend, Daniel Halpern-Leistner, and Ezra Getzler for helpful conversations. F. Schaffhauser's research is partially supported by AEI-DFG \textit{V-SHARP} Project SCHA 2147\_1-1 AOBJ 706923. C.-C. M. Liu is partially supported by NSF DMS-1564497. 
\end{ack}

\section{Hodge theory of moduli stacks of principal bundles}\label{Hodge_theory_for_stacks}

In this section, we compute the Hodge-Poincar\'{e} series of the moduli stack of all principal $G$-bundles on a smooth projective curve, for $G$ a complex reductive group (Theorem \ref{thm:Hodge}).  When $G$ is semisimple, this was computed by C. Teleman in \cite[Chapter IV]{Teleman}.

\subsection{Hodge theory of classifying stacks}

Let $G$ be a connected, reductive algebraic group defined over $\bC$. The classifying space $BG$ of $G$ is a smooth algebraic stack. By definition, the stack $BG :=[\bullet/G]$ is the quotient stack of a point $\bullet$ by the trivial action of $G$, and the projection $\bullet \lra [\bullet/G]$ is the universal principal $G$-bundle, also denoted $EG\lra BG$. The projection $\bullet\lra BG=[\bullet/G]$ is a smooth morphism of relative dimension equal to $\dim G$, so $BG$ is a smooth algebraic stack of dimension equal to $-\dim G$. By \cite[Theorem~9.1.1]{Deligne_Hodge_III}, there is a $\Q$-Hodge structure on $H^*(BG;\C)$ which is pure and Hodge-Tate, in the following sense.

\begin{definition}
A \emph{pure $\Q$-Hodge structure} is a triple $(V_\Q,(V^{p,q})_{p,q\geq 0},\psi)$ consisting of a $\Q$-vector space $V_\Q$, a family $(V^{p,q})_{p,q\geq 0}$ of finite-dimensional $\C$-vector spaces, and an isomorphism of $\C$-vector spaces $$\psi: \bigoplus_{k=0}^{+\infty}\bigoplus_{p+q=k} V^{p,q}\overset{\simeq}{\lra} V_\Q\otimes_\Q\C $$ such that $\ov{V^{p,q}} = V^{q,p}$ as subspaces of $V_\C:=V_\Q\otimes_\Q\C$, where $v\lmt\ov{v}$ is the $\C$-antilinear automorphism of $V_\C=V_\Q\otimes_\Q\C$ induced by the complex conjugation of $\C$.  

\smallskip

The \emph{Hodge-Poincar\'{e} series} of a pure Hodge structure is the formal power series in two variables $$HP_{u,v}(V_\C) := \sum_{k=0}^{+\infty}\sum_{p+q=k} u^p\,v^q\, \dim_\C\,V^{p,q}\in\Z[\![u,v]\!].$$

A pure $\Q$-Hodge structure is said to be \emph{Hodge-Tate} (or \emph{balanced}) if $V^{p,q}=0$ when $p\neq q$. When the cohomology $H^*(\fM;\C)$ of an algebraic stack $\fM$ carries a pure Hodge structure, the Hodge-Poincar\'{e} series of that Hodge structure will be denoted by $HP_{u,v}(\fM)$.
\end{definition}

In what follows, we shall refer to a $\Q$-Hodge structure $(V_\Q,(V^{p,q})_{p,q\geq 0},\psi)$ simply as a Hodge structure on the complex vector space $V_\C$. By definition of a pure Hodge structure, an element $v\in V^{p,q}$ has a complex conjugate $\ov{v}\in V^{q,p}$. In the examples of interest to us, we have $V_\Q=H^*(\fM;\Q)$, meaning that $V_\Q$ is the cohomology, with coefficients in the locally constant sheaf $\Q$, of a smooth complex algebraic stack $\fM$. The vector spaces $V^{p,q}$ are the Hodge cohomology groups of $\fM$, which are defined as 

\begin{equation}\label{Hodge_groups}
H^{p,q}(\fM):=H^q(\fM;\Lambda^p\,\mathbb{L}_{\fM})
\end{equation} 
where $\mathbb{L}_{\fM}$ is the cotangent complex of $\fM$ \cite{Totaro}. When the Hodge-to-de-Rham spectral sequence of $\fM$ degenerates at the $E_1$ page, we have a decomposition $$H^{\,k}_{DR}(\fM) \simeq \bigoplus_{p+q=k} H^{p,q}(\fM)$$ and the de Rham isomorphism $$\psi:H^{\,*}_{DR}(\fM)\overset{\simeq}{\lra}H^*(\fM;\C)=H^*(\fM;\Q)\otimes_\Q\C$$ induces a pure Hodge structure on the cohomology of $\fM$. A sufficient condition for the Hodge-to-de-Rham spectral sequence of $\fM$ to degenerate at the $E_1$ page is given in \cite[Theorem~1.4.3]{Kubrak_Prikhodko}. Section 3 of \textit{loc.\ cit.}\ shows that this condition is satisfied for the moduli stack of principal $G$-bundles on a complex smooth projective curve $X$, as well as for its Harder-Narasimhan strata (the definition of the latter will be recalled in Section \ref{HN_stratif_section} of the present paper). Before we get to that, let us collect a few facts on the Hodge structure of $H^*(BG;\C)$.

\smallskip 

Let $H\simeq(\bC^*)^r$ be a maximal  torus in $G$, where $r$ is the rank of $G$. Then  $BH \simeq (\CP^\infty)^r$ as ind-schemes, so $H^*(BH;\bC)$ is a polynomial ring
\begin{equation}\label{classif_stack_max_torus}
H^*(BH;\bC) \simeq \bC[ x_1,\,\ldots\,,x_r]
\end{equation}
where $x_i \in H^2(BH;\bZ)\cap H^{1,1}(BH)$.  Therefore, the Hodge-Poincar\'{e} series of $BH$ is the infinite series
$$
HP_{u,v}(BH) = \frac{1}{(1-uv)^r}\, ,
$$
which is an element in $\bZ[\![u,v]\!] \cap \bZ(u,v)$. The map $BH\lra BG$ (which is a fiber bundle with fiber $G/H$)  induces an injective $\C$-algebra homomorphism compatible with the Hodge decomposition:
$$
H^*(BG;\bC) \lra H^*(BH;\bC) \simeq \bC[x_1,\,\ldots\,,x_r] 
$$
whose image is the sub-algebra fixed by the Weyl group $W:=N(H)/H$, where $N(H)$ is the normalizer of $H$ in $G$.   Therefore, 
$$
H^*(BG;\bC) \simeq \bC[x_1,\,\ldots\,,x_r]^W \simeq \bC[I_1,\,\ldots\,,I_r]
$$
where $I_1,\,\ldots\,, I_r$ are homogeneous polynomials of degree $d_k$ in $(x_1,\,\ldots\,,x_r)$. We have $I_k\in H^{d_k,d_k}(BG)$ for some positive integer $d_k$. By permuting $I_1,\,\ldots\,,I_r$ if necessary, we may assume that
\begin{equation}\label{group_exponents}
d_1 =\cdots = d_m = 1 < d_{m+1}\leq\cdots \leq d_r\,,
\end{equation} 
where $m$ is equal to the dimension of $Z_G$, the center of $G$. So $m=0$ if and only of $G$ is semi-simple. The Hodge-Poincar\'{e} series of $BG$ is
\begin{equation}
HP_{u,v}(BG) = \frac{1}{\prod_{k=1}^r(1- u^{d_k} v^{d_k}) }  =\frac{1}{(1-uv)^m \prod_{k=m+1}^r (1-u^{d_k}v^{d_k})}\,.
\end{equation}
\begin{example}[Classical groups] $ $
\begin{enumerate}
\item[(A)] $G=\GL_r(\bC)$. Then $I_k=c_k$ is the $k$-th elementary symmetric polynomial in the $r$ variables $(x_1,\,\ldots\,,x_r)$ and $d_k=\deg I_k=k$, so
$$
H^*\big(B\GL_r(\bC);\bC\big) \simeq\bC[c_1,\,\ldots\,,c_r]
$$
where $c_k\in H^{k,k}(B\GL_r(\bC))\cap H^{2k}(B\GL_r(\bC);\bZ)$ is the $k$-th Chern class of the universal bundle $E\GL_r(\C)$. Therefore,
$$
HP_{u,v}\big(B\GL_r(\bC)\big) = \frac{1}{\prod_{k=1}^r (1-u^k v^k)}\ . 
$$
The injective group homomorphism $\SL_r(\bC)\lra \GL_r(\bC)$ induces a map $B\SL_r(\bC)\lra B\GL_r(\bC)$ with fiber $\GL_r(\C)/\SL_r(\C)\simeq \C^*$, and a surjective ring homomorphism
$$
H^*\big(B\GL_r(\bC);\bC\big) \simeq \bC[c_1,\,\ldots\,,c_r]\longrightarrow H^*\big(B\SL_r(\bC);\bC\big) \simeq\bC[c_2,\,\ldots\,,c_r]
$$
sending $c_1$ to zero. So:
$$
HP_{u,v}\big(B\SL_r(\bC)\big) =\frac{1}{\prod_{k=2}^r (1-u^k v^k)}\ .
$$

\item[(B)]  $G=\SO_{2r+1}(\bC)$. Then $I_k = p_k$ is the $k$-th universal Pontryagin class, and $d_k = 2k$.
$$
H^*\big(B\SO_{2r+1}(\bC);\bC\big) =\bC[p_1,\,\ldots\,,p_r]
$$
where $p_k\in H^{2k,2k}(B\SO_{2r+1}(\bC))$ is the $k$-th Pontryagin class of $E\SO_{2r+1}(\C)$. Therefore,
$$
HP_{u,v}\big(B\SO_{2r+1}(\bC)\big)  = \frac{1}{\prod_{k=1}^r (1-u^{2k} v^{2k})}\ . 
$$
\item[(C)] $G=\Sp_r(\bC)$. Then $d_k =2k$, and
$$
H^*\big(B\Sp_r(\bC);\bC\big) =\bC[\sigma_1,\,\ldots\,,\sigma_r]
$$
where $\sigma_k \in H^{k,k}(B\Sp_r(\bC))\cap H^{2k}(B\Sp_r(\bC);\bZ)$. Therefore,
$$
HP_{u,v}\big(B\Sp_r(\bC)\big)= \frac{1}{\prod_{k=1}^r (1-u^{2k} v^{2k})}\ .
$$
\item[(D)] $G=\SO_{2r}(\bC)$. Then
$$
H^*\big(B\SO_{2r}(\bC);\bC\big) =\bC[p_1,\,\ldots\,,p_{r-1},e]
$$
where $p_k\in H^{2k,2k}(B\SO_{2n}(\bC))$ is the universal $k$-th Pontryagin class and $e\in H^{r,r}(B\SO_{2r}(\bC))$ is the universal Euler class. Therefore, 
$$
HP_{u,v}\big(B\SO_{2r}(\bC)\big)  = \frac{1}{(1-u^r v^r)\prod_{k=1}^{r-1} (1-u^{2k} v^{2k})}\ . 
$$ 
\end{enumerate}
\end{example}
Over base fields other than $\C$, B.~Totaro has computed the Hodge and de Rham cohomology of the classifying space $BG$ (defined as the \'{e}tale cohomology of the algebraic stack $BG$) in many cases, including for fields of small characteristic \cite{Totaro}. Totaro's definition of the de Rham cohomology of an algebraic stack agrees with the definition of \cite{Behrend, Teleman} for smooth algebraic stacks defined over a field of characteristic zero. 

\subsection{Moduli stacks of principal bundles}\label{stacks_of_G_bdles}
Let $\Sch$ denote the category of $\bC$-schemes, i.e.\ schemes defined over $\Spec\, \bC$. Let $X$ be a smooth projective curve over $\C$ and let $\BunGX$ be the category whose objects are pairs $(S, E)$ where
$S$ is a $\bC$-scheme and $E\lra S\times X$ is a principal $G$-bundle over $S\times X$. A morphism $(S_1,E_1) \lra (S_2, E_2)$ is a cartesian diagram
$$
\xymatrix{
E_1 \ar[r]\ar[d] & E_2 \ar[d] \\
S_1 \times X \ar[r]^{\phi\times id_X}  & S_2 \times X }
$$
where $\phi:S_1\lra S_2$ is a morphism of $\bC$-schemes and $id_X:X\lra X$ is the identity map. The functor
$\BunGX\lra \Sch$, given by $(S,E)\lmt S$, makes $\BunGX$ a category fibered in groupoids over $\Sch$. 
Given a $\bC$-scheme $S$, the fiber $\BunGX(S)$ is the category whose objects are principal $G$-bundles $E$ on $S\times X$ and morphisms are cartesian diagrams
$$
\xymatrix{
E_1 \ar[r]\ar[d] & E_2 \ar[d] \\
S \times X \ar[r]^{id_{S\times X}} & S \times X }
$$
or equivalently, isomorphisms of principal $G$-bundles over $S\times X$. Note that $\BunGX(S)$ is a groupoid since all the morphisms are isomorphisms. We may also view $\BunGX$ as a contravariant $2$-functor
from the category of $\bC$-schemes to the category of groupoids, which is actually a sheaf of groupoids for the étale topology on $\Sch$:
$$
\BunGX: \begin{array}{rcl}
\Sch^{\mathrm{op}} & \lra & \Gp\\
S & \lmt & \BunGX(S)
\end{array}.
$$

\smallskip

For all $\C$-point $s:\Spec\,\bC\lra S$, the pullback of a principal $G$-bundle $E\lra S\times X$ is a principal $G$-bundle $E_s\lra X$ over $X$, i.e.\ an object in $E\in\BunGX(\C)$. 

Recall that, since $\dim_\R X=2$, the topological type of a principal $G$-bundle $E$ over $X$ is characterized by the obstruction classes
$$
o_1(E)\in H^1(X;\pi_0G)\ \mathrm{and}\ o_2(E) \in H^2(X,\piG) \simeq \piG.
$$
Note that $o_1(E)=0$ if $G$ is connected. In this case we call $o_2(E)$ the {\em degree} of $E$. 
\begin{example} 
If $G=\O_{r}(\bC)$, then $\pi_0G\simeq\bZ/2\bZ$ and $o_1(E)=w_1(E) \in H^1(X;\bZ/2\bZ)\simeq(\Z/2\Z)^{2g}$ is the first Stiefel-Whitney class. 
If $G=\GL_r(\bC)$, then $\pi_0(G)=0$, $\pi_1(G)\simeq \bZ$, and $o_2(E)  = c_1(E) \in H^2(X,\bZ)$ is the first Chern class. 
If $G=\SO_{2r}(\bC)$, then $\pi_0 G =0$,  $\piG\simeq\bZ/2\bZ$, and $o_2(E)=w_2(E) \in H^2(X;\bZ/2\bZ)\simeq\Z/2\Z$ is the second Stiefel-Whitney class. 
\end{example}

The above topological classification holds when $G$ is a topological group and $X$ is a connected closed oriented topological 2-manifold. The isomorphism
$H^2(X,\piG) \simeq \piG$ depends on the  orientation on $X$.  When $G$ is 
a connected topological group, so that $o_1(E)=0$,  a proof of this topological classification can be found in \cite[Section 5]{Ramanathan}, where $o_2(E)$ is denoted $\chi(E)$. When $G$ is a Lie group and $X$ is a connected closed Riemann surface, the topological and $C^\infty$ classification of $G$-bundles over $X$ are the same. 

We can write $\BunGX$ as a disjoint union of connected components (i.e.\ irreducible open and closed substacks):
$$
\BunGX = \bigsqcup_{d\in \piG} \BunGXd
$$
where $\BunGXd$ is the moduli stack of principal $G$-bundles of degree $d$ over $X$, meaning that, for all $\C$-scheme $S$, the category $\BunGXd(S)$ is the groupoid 
$$
\BunGXd(S) = \left<E\in \BunGX(S)\ |\ \forall\,s\in S(\C), \deg\, E_s=d\right>.
$$  

By the Yoneda lemma for stacks, for all $\C$-scheme $T$, there is a natural equivalence of categories
$$
\Mor_{\text{stacks}}(\underline{T},\BunGXd) \cong \BunGXd(T)
$$
where $\underline{T}: \Sch^{\text{op}} \to \text{Sets} \subset \text{Groupoids}$ sends
a $\bC$-scheme $S$ to the set  $\underline{T}(S):= \Mor_{\Sch}(S,T)$. 
The universal principal $G$-bundle  $U\to \BunGXd\times X$  is the principal $G$-bundle associated to the identity morphism 
$$
\mathrm{id}_{\BunGXd}\in \Mor_{\text{stacks}} \left( \BunGXd, \BunGXd\right).
$$

Let $\fg$ be the Lie algebra of $G$. Given a principal $G$-bundle $E$ over a scheme or an algebraic stack $B$, let $\ad\, E = E\times_{\Ad} \fg \to B$ be the 
vector bundle associated to the adjoint representation $\Ad: G\to \GL(\fg)$.  The
tangent complex of the smooth algebraic stack $\BunGXd$ is given by 
$$
\cT_{\BunGXd} = \mathbf{R}\pi_* \ad\, U[-1]
$$
where $\pi: \BunGXd\times X\to \BunGXd$ is the projection onto the first factor. The tangent bundle of $\BunGXd$ is
$$
T_{\BunGXd}= h^1/h^0\left(\mathbf{R}\pi_*\ad\, U\right)
$$
which is a vector bundle stack over $\BunGXd$. Given an object $E\to X$ in $\BunGXd(\bC)$, or equivalently a morphism $\xi: \Spec(\bC) \to \BunGXd$, the
fiber of $T_{\BunGXd}$ over $\xi$ is the quotient stack
\begin{equation}\label{infinitesimal_deformations}
[ H^1(X, \ad\, E)/H^0(X, \ad\, E)]
\end{equation}
where $H^0(X,\ad\, E)$ (resp. $H^1(X,\ad\, E)$) is the space of infinitesimal automorphisms (resp.\ deformations) of the principal $G$-bundle $E\to  X$, and $H^0(X, \ad\, E)$ acts trivially on $H^1(X,\ad\, E)$. 

\smallskip

The dimension of the smooth algebraic stack $\BunGXd$ is equal to the rank of its tangent bundle, which, by Riemann-Roch on $X$, is equal to 
$$
\dim H^1(X,\ad\, E) - \dim H^0(X,\ad\, E) =  - \left( \deg(\ad\, E) + \rank(\ad\, E) (1-g) \right) = \dim G (g-1)
$$
since $\rank(\ad\, E) =\dim G$ and $\deg(\ad\, E)=0$, the latter because $\ad\,E$ is self-dual when $G$ is reductive.

\smallskip

\begin{remark}\label{differential_viewpoint}
We refer for instance to \cite{AB,Ho_Liu_YM_AMS} for more details on the following differential-geometric viewpoint. Take $d\in\piG$ and let $\cP$ be a $C^\infty$ principal $G$-bundle of degree $d$ over $X$. Let $\cC$ be the infinite-dimensional complex affine space of $(0,1)$-connections on $\cP$, and let $\cG$ be the gauge group of $\mathscr{P}$. The latter is an infinite-dimensional Lie group, and we denote by $B\cG$ its classifying space in the homotopy-theorectic sense. For $G=\GL_r(\C)$, Atiyah and Bott proved in \cite{AB} that the graded $\Z$-algebra $H^*(B\cG;\Z)$ is a torsion-free $\Z$-module which is finitely generated in each degree, and computed its Poincaré series. For a general reductive group $G$, the rational Poincaré series of $\BunGXd$ was computed by Laumon and Rapoport \cite{LR}. The point is that, for a reductive complex algebraic group $G$, we have:
\begin{equation}\label{eqn:BunG-BcG} 
 H^*\big(\BunGXd;\bQ\big) \simeq  H^*_{\cG}\big(\cC;\bQ\big) \simeq  H^*\big(B\cG;\bQ\big). 
 \end{equation}
\end{remark}

In this paper, we will be interested mostly in the cohomology of $\BunGXd$ with coefficients in $\C$, which does not change the Poincaré series since $H^*(\BunGXd;\C)\simeq H^*(\BunGXd;\Q)\otimes_\Q\C$ by the universal coefficient theorem.

 \begin{theorem}[Atiyah-Bott \cite{AB}, Laumon-Rapoport \cite{LR}] \label{thm:stack-Poincare} Let $m$ and $(d_k)_{1\leq k\leq r}$ be defined as in \eqref{group_exponents}. Then:
 $$
P_t\big(\BunGXd;\bQ\big)=   \left(\frac{(1+t)^{2g}}{1-t^2} \right)^m \prod_{k=m+1}^r \frac{(1+t^{2d_k-1})^{2g}}{(1-t^{2d_k-2})(1-t^{2d_k})}. 
 $$
 \end{theorem}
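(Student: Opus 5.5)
Since $H^*(\BunGXd;\bQ)\simeq H^*(B\cG;\bQ)$ by \eqref{eqn:BunG-BcG}, the plan is to compute the rational cohomology of $B\cG$ from its homotopy type, as Atiyah--Bott do.

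\emph{Step 1: identify $B\cG$.} Take $\cG$ to be the gauge group of the $C^\infty$ bundle $\cP$ of degree $d$, with $G$ deformation-retracting onto a maximal compact subgroup $K$. Then $B\cG$ is homotopy equivalent to the connected component $\mathrm{Map}_{\cP}(X,BG)$ of the mapping space $\mathrm{Map}(X,BG)$ that contains the classifying map of $\cP$.

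\emph{Step 2: compute rationally.} Rationally, $BG\simeq_\bQ\prod_{k=1}^r K(\bQ,2d_k)$, since $H^*(BG;\bQ)=\bQ[I_1,\dots,I_r]$ is free on generators of degree $2d_k$. A mapping space into a product of rational Eilenberg--MacLane spaces splits as a product. For each factor, the standard decomposition gives
\[
\mathrm{Map}\big(X,K(\bQ,n)\big)\simeq \prod_{j} K\big(H^{n-j}(X;\bQ),j\big),
\]
so each component is $K(\bQ,n)\times K(\bQ^{2g},n-1)\times K(\bQ,n-2)$ when $n\ge 2$ (components are indexed by $H^n(X;\bQ)$).

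For $n=2d_k$, the rational cohomology of this factor is a free graded-commutative algebra with one generator in degree $2d_k$, $2g$ odd generators in degree $2d_k-1$, and one generator in degree $2d_k-2$.
\begin{itemize}
\item When $d_k>1$, the degree-$(2d_k-2)$ generator is even of positive degree, so the Poincaré series of the factor is $(1+t^{2d_k-1})^{2g}/\big((1-t^{2d_k})(1-t^{2d_k-2})\big)$.
\item When $d_k=1$ (the $m$ central factors), the degree-$0$ part is only $\pi_0$: it indexes components and contributes nothing on a single component. The factor therefore gives $(1+t)^{2g}/(1-t^2)$.
\end{itemize}
Taking the product over $k$ yields the stated formula, independently of $d$.

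\emph{Step 3: justify the rationalization.} This is the main obstacle. Rationalization commutes with mapping spaces out of the finite complex $X$ only component-wise and for nilpotent targets, and here $\pi_1$ of the mapping space may be nontrivial. I would handle this in one of two ways:
\begin{itemize}
\item pass to a finite cover / the simply connected case $[G,G]\times Z$, and track the action of $\pi_0$ of the central torus part; or
\item follow Atiyah--Bott directly. Use the fibration $\cG_0\to\cG\to G$ (the based gauge group), with $B\cG_0$ a component of $\Omega^2 BG\times(\Omega BG)^{2g}$. Its rational cohomology is computed via $\Omega BG\simeq G$ and the fact that $H^*(G;\bQ)$ is exterior on generators of degree $2d_k-1$. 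Then check that the Serre spectral sequence for $B\cG_0\to B\cG\to BG$ degenerates, since the universal-bundle slant-product classes $a_k,b_k^j,f_k$ restrict to generators on the fiber (Leray--Hirsch).
\end{itemize}
This second route also directly produces the generators, whose Hodge types will later matter for Theorem~\ref{thm:Hodge}.

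Alternatively, one can simply cite \cite{AB} and \cite{LR} for the count. The proposal above is how I would reprove it.
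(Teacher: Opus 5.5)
Your proposal is correct, but it follows a different route from the paper. The paper does not reprove the statement. It quotes the Atiyah--Bott description \eqref{eqn:algebra} of $H^*(\BunGXd;\bQ)$ as the free graded-commutative algebra on the K\"unneth components $h_k$, $a^j_k$, $b^j_k$ and $f_k$ (the $f_k$ only for $k>m$) of the universal classes $I_k(U)$, and reads off the Poincar\'e series from the degrees \eqref{degree_generators}.

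Your main argument instead computes the rational homotopy type of $\mathrm{Map}_{\cP}(X,BG)$, using $BG_\bQ\simeq\prod_k K(\bQ,2d_k)$ and Thom's splitting of $\mathrm{Map}(X,K(\bQ,n))$. This is self-contained and explains conceptually why the answer is a free algebra independent of $d$. The Atiyah--Bott description instead gives explicit generators as K\"unneth components of algebraic classes, which is exactly what the paper needs next to read off Hodge types in Theorem~\ref{thm:Hodge}. The two pictures agree: the generators from your Eilenberg--MacLane factors can be taken to be the slant products of $\mathrm{ev}^*I_k$ with the point class, the $\alpha_j,\beta_j$, and $[X]$. These are precisely $h_k$, $a^j_k$, $b^j_k$, $f_k$.

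The obstacle you flag in Step 3 is not really there. Since $G$ is connected, $BG$ is simply connected, hence nilpotent, and $X$ is a finite complex. By Hilton--Mislin--Roitberg, each component of $\mathrm{Map}(X,BG)$ is then nilpotent and maps by a rationalization to the corresponding component of $\mathrm{Map}(X,BG_\bQ)$. A non-trivial (even non-abelian) $\pi_1$ of the mapping space does not matter, so neither fallback is needed.

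If you do use the Atiyah--Bott fallback, claim only that $B\cG_0$ is rationally a product. The fibration $\Omega^2BG\to B\cG_0\to G^{2g}$ is induced by the product-of-commutators map $G^{2g}\to G$, which need not be null-homotopic integrally. Leray--Hirsch only needs the universal classes to restrict to generators on the fibre.
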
 
 \begin{corollary}
If $G=\GL_r(\bC)$, then $m=1$ and $d_k=k$, so:
$$
P_t\big(\BunGXd;\bQ\big) =  \frac{(1+t)^{2g}}{1-t^2}  \prod_{k=2}^r \frac{(1+t^{2k-1})^{2g}}{(1-t^{2k-2})(1-t^{2k})}. 
$$
\end{corollary}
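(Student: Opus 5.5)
This is a direct specialization of Theorem \ref{thm:stack-Poincare}, so the plan is to substitute the data of $\GL_r(\bC)$ into that formula. There are three steps.

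\textbf{Step 1: the center.} The center of $\GL_r(\bC)$ is the group of scalar matrices $\bC^*\cdot \mathrm{Id}$. It is one-dimensional, so $m=\dim Z_G=1$.

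\textbf{Step 2: the degrees.} By Example (A), the $W=\mathfrak{S}_r$-invariant generators of $H^*(BH;\bC)\simeq\bC[x_1,\ldots,x_r]$ are the elementary symmetric polynomials $c_1,\ldots,c_r$. Their degrees are $d_k=k$. In particular
$$d_1=1<d_2=2\leq\cdots\leq d_r=r,$$
which is exactly the ordering required in \eqref{group_exponents}, and it agrees with $m=1$.

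\textbf{Step 3: substitution.} Insert $m=1$ and $d_k=k$ into the formula of Theorem \ref{thm:stack-Poincare}. The factor $\left(\frac{(1+t)^{2g}}{1-t^2}\right)^m$ becomes $\frac{(1+t)^{2g}}{1-t^2}$. The product $\prod_{k=m+1}^r$ becomes $\prod_{k=2}^r$, with factors
$$\frac{(1+t^{2k-1})^{2g}}{(1-t^{2k-2})(1-t^{2k})}.$$
This is the stated formula.

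There is no real obstacle. The only point that needs care is that the ordering convention \eqref{group_exponents} is respected, so that the index split at $k=m+1=2$ is the correct one. This holds because $c_1$ is the unique generator of degree one. The result is independent of $d\in\piG\simeq\bZ$, as in the theorem.
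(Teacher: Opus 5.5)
Your proposal is correct and matches the paper, which obtains the corollary by substituting $m=\dim Z_{\GL_r(\bC)}=1$ and $d_k=k$ (the degrees of the Chern classes $c_k$ in Example (A)) into Theorem \ref{thm:stack-Poincare}. You also correctly check that the split of the product at $k=m+1=2$ respects the ordering convention \eqref{group_exponents}.
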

 
\subsection{Hodge theory of moduli stacks of bundles}

\subsubsection{The Hodge-Poincar\'{e} series} 

In \cite[Section 4]{Teleman}, C.~Teleman proved the existence of a pure Hodge structure on $H^*(\BunGXd;\bC)$ and computed its Hodge-Poincaré series in the case when $G$ is semisimple \cite[Proposition~4.4]{Teleman}. His formula was also obtained by K.~Behrend and A.~Dhillon in \cite[Remark~4.2]{BD}, as a consequence of the main conjecture in that paper (see also \cite{dARN}). Here, we extend Teleman's result to the reductive case, which constitutes a refinement of Theorem \ref{thm:stack-Poincare}. 

\begin{theorem}\label{thm:Hodge}
Given a reductive complex algebraic group $G$ , the Hodge-Poincaré series of the moduli stack of principal $G$-bundles of degree $d$ over a complex smooth proper curve $X$ is given by the formula:
\begin{equation}\label{HP_series_of_BunGXd}
HP_{u,v}\big(\BunGXd\big) =  \left(\frac{ (1+u)^g(1+v)^g }{1-uv} \right)^m \prod_{k=m+1}^r
 \frac{(1+u^{d_k} v^{d_k-1})^g (1+u^{d_k-1} v^{d_k})^g}{(1- u^{d_k-1} v^{d_k-1})(1-u^{d_k} v^{d_k})}\,,
 \end{equation}
where $g$ is the genus of $X$ and $m$ and $(d_k)_{1\leq k\leq r}$ are defined as in \eqref{group_exponents}. 
 \end{theorem}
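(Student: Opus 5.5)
The approach is the Atiyah--Bott description of the rational cohomology of $\BunGXd$ by generators (Künneth components of characteristic classes of the universal bundle), refined by keeping track of Hodge types. Let $U\lra \BunGXd\times X$ be the universal $G$-bundle. For each basic invariant $I_k\in H^{d_k,d_k}(BG)$ of degree $2d_k$, the classifying map of $U$ gives a class $I_k(U)\in H^{2d_k}(\BunGXd\times X;\bC)$. These classes are algebraic: they come from the Chern--Weil or Totaro-type description of $H^*(BG)$ via the cotangent complex, so they have Hodge type $(d_k,d_k)$. Choose a basis of $H^*(X;\bC)$ adapted to the Hodge decomposition: $1\in H^{0,0}$, $\alpha_1,\ldots,\alpha_g\in H^{1,0}$, $\beta_1,\ldots,\beta_g\in H^{0,1}$, and $[X]\in H^{1,1}$. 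The Künneth decomposition
$$
I_k(U)=a_k\otimes 1+\sum_{j=1}^g\big(b_{k,j}\otimes \alpha_j+b'_{k,j}\otimes\beta_j\big)+f_k\otimes[X]
$$
then produces classes on $\BunGXd$. Because the Künneth isomorphism is an isomorphism of Hodge structures, these classes have pure types:
\begin{itemize}
\item $a_k$ has type $(d_k,d_k)$;
\item $b_{k,j}$ has type $(d_k-1,d_k)$, since the Künneth dual of $\alpha_j$ lives in $H^{0,1}$;
\item $b'_{k,j}$ has type $(d_k,d_k-1)$;
\item $f_k$ has type $(d_k-1,d_k-1)$.
\end{itemize}
When $d_k=1$, i.e.\ $k\leq m$, the class $f_k$ lies in $H^0$ and only records the degree $d$. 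In that case it is discarded. The $a_k,b_{k,j},b'_{k,j}$ with $k\leq m$ give the factors $(1+u)^g(1+v)^g/(1-uv)$ after the usual reduction.

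The key input, which I take from Atiyah--Bott for the reductive case and from the Poincaré-series computation of Laumon--Rapoport (Theorem \ref{thm:stack-Poincare}), is that $H^*(\BunGXd;\bQ)$ is freely generated as a graded-commutative algebra by these Künneth components. Concretely, it is the tensor product of a polynomial algebra on the even classes and an exterior algebra on the odd classes $b_{k,j},b'_{k,j}$. To justify freeness, I would first show that the natural map from the free graded-commutative algebra on these generators to the cohomology is surjective; this follows from the Atiyah--Bott argument via $B\cG$ and the mapping space $\Mor(X,BG)$ in \eqref{eqn:BunG-BcG}. Comparing with the Poincaré series in Theorem \ref{thm:stack-Poincare} then shows that the generators match degree by degree, so the map is an isomorphism. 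One can alternatively argue rationally, using $BG_\bQ\simeq\prod K(\bQ,2d_k)$ and computing the cohomology of the mapping space directly.

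Given freeness, the Hodge structure on $H^*(\BunGXd;\bC)$, which is pure by the degeneration result of Kubrak--Prikhodko quoted above, is the tensor product of the Hodge structures spanned by the generators, because cup product respects Hodge types. The Hodge--Poincaré series is then a product of one factor per generator:
\begin{itemize}
\item $1/(1-(uv)^{d_k})$ for each $a_k$;
\item $1/(1-(uv)^{d_k-1})$ for each $f_k$ with $k>m$;
\item $(1+u^{d_k-1}v^{d_k})^g(1+u^{d_k}v^{d_k-1})^g$ for the exterior generators.
\end{itemize}
This yields \eqref{HP_series_of_BunGXd}, with the $k\leq m$ factors specializing to $(1+u)^g(1+v)^g/(1-uv)$.

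The main obstacle is rigorously establishing the Hodge types of the Künneth components. This requires that the classes $I_k(U)$ lie in the correct Hodge pieces of the stack cohomology, so the characteristic-class map must be compatible with the mixed (here pure) Hodge structure on stacks. I would handle this by working with a smooth atlas, or with the quotient presentation of the finite-type open substacks $\BunGXd$ by $\GL_N$ acting on Quot-type schemes, where Deligne's theory applies. Since $U$ is algebraic, its characteristic classes are Hodge classes there. Taking the limit over the exhausting open substacks, which stabilizes in each degree, then transfers the Hodge types to $\BunGXd$.
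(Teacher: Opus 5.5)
Your proposal is correct and is essentially the paper's proof. The classes $I_k(U)$ are algebraic of type $(d_k,d_k)$, and the Künneth decomposition against a Hodge-adapted basis $1,\omega_j,\bar\omega_j,\omega$ of $H^*(X)$ respects Hodge types; the Atiyah--Bott free presentation \eqref{eqn:algebra}, transported to the complex generators $\theta^j_k,\bar\theta^j_k$, then yields the product formula. The only difference is that you sketch justifications for two steps the paper simply cites from Atiyah--Bott and Deligne: freeness via surjectivity plus a Poincaré-series count, and Hodge compatibility via a finite-type exhaustion. For the first, the count must rest on Atiyah--Bott's independent mapping-space computation, not on Theorem~\ref{thm:stack-Poincare}, because the paper derives that theorem from \eqref{eqn:algebra}.
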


\begin{remark}
Note that the right hand side of \eqref{HP_series_of_BunGXd} does not depend on $d$, so we will denote it simply by $a_{u,v}(G_X)$. More generally, we introduce the following notation, which will be useful in Section \ref{inversion_section}. For all reductive subgroup $L\subset G$, we denote by $m(L)$ the complex dimension of $Z_L$, by $r(L)$ the rank of $L$ and, as in \eqref{group_exponents}, by $(d_i(L))_{1\leq i \leq r(L)}$ the degrees of the homogeneous polynomials that generate $H^*(BL;\C)$. Theorem \ref{thm:Hodge} then shows that, for all $\ud\in\pi_1L$, the Hodge-Poincaré series of the moduli stack of principal $L$-bundles of topological type $\ud$ on $X$ is given by the following formula:
\begin{equation}\label{HP_series_of_BunLXd}
a_{u,v}(L_X):= HP_{u,v}(\fBun_{L_X}^{\,\delta}) =  \left(\frac{ (1+u)^g(1+v)^g }{1-uv} \right)^{m(L)} \prod_{k=m(L)+1}^{r(L)}
 \frac{(1+u^{d_k(L)} v^{d_k(L)-1})^g (1+u^{d_k(L)-1} v^{d_k(L)})^g}{(1- u^{d_k(L)-1} v^{d_k(L)-1})(1-u^{d_k(L)} v^{d_k(L)})}\,.
\end{equation}
\end{remark}

When $G$ is semi-simple, or equivalently when $m=0$, Formula \eqref{HP_series_of_BunGXd} indeed coincides with \cite[Remark 4.2]{BD}. When $G=\GL_r(\bC)$, we get the following result, which appears under a different form in \cite{Earl_Kirwan}.

 \begin{corollary}\label{HP_stack_of_vector_bundles}
 If $G=\GL_r(\bC)$, then $m=1$ and $d_k=k$, so:
 $$
HP_{u,v}\big( \BunGXd\big) =  \frac{(1+u)^g(1+v)^g} {1-uv}  \prod_{k=2 }^r
 \frac{(1+u^k v^{k-1})^g (1+u^{k-1} v^k)^g}{(1- u^{k-1} v^{k-1})(1-u^k v^k)}\,.
 $$
 \end{corollary}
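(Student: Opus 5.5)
The statement is Corollary~\ref{HP_stack_of_vector_bundles}, and I would deduce it by specializing Theorem~\ref{thm:Hodge} to $G=\GL_r(\bC)$. The only inputs needed are the integers $m$ and $(d_k)_{1\leq k\leq r}$ of \eqref{group_exponents} for this group.

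First, I determine these integers. The center of $\GL_r(\bC)$ is the group of scalar matrices $\bC^*$, so $m=\dim Z_G=1$. By Example (A), $H^*(B\GL_r(\bC);\bC)\simeq\bC[c_1,\ldots,c_r]$, where $c_k$ is the $k$-th elementary symmetric polynomial in $x_1,\ldots,x_r$ and has degree $d_k=k$. The ordering $d_1=1<d_2\leq\cdots\leq d_r$ required in \eqref{group_exponents} therefore holds with exactly one exponent equal to $1$. This matches $m=1$, so the two pieces of data are consistent.

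Second, I substitute into \eqref{HP_series_of_BunGXd}. The factor $\left(\frac{(1+u)^g(1+v)^g}{1-uv}\right)^m$ becomes $\frac{(1+u)^g(1+v)^g}{1-uv}$. The product runs over $k=m+1=2,\ldots,r$, and the substitution $d_k=k$ gives the factors
$$
\frac{(1+u^kv^{k-1})^g(1+u^{k-1}v^k)^g}{(1-u^{k-1}v^{k-1})(1-u^kv^k)}.
$$
This is exactly the stated formula, and since the right-hand side of \eqref{HP_series_of_BunGXd} does not depend on $d$, it holds for every $d\in\pi_1\GL_r(\bC)\simeq\bZ$.

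There is no real obstacle, since all the substantive work sits in Theorem~\ref{thm:Hodge}. The only point to check is that $\GL_r(\bC)$ meets that theorem's hypotheses, which it does because it is connected and reductive. As an optional sanity check, setting $u=v=t$ recovers the Poincaré series of the corollary to Theorem~\ref{thm:stack-Poincare}, using $(1+t^{2k-1})^{2g}=(1+t^{2k-1})^g(1+t^{2k-1})^g$.
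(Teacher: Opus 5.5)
Your proposal is correct and matches the paper, which obtains this corollary by substituting $m=\dim Z_{\GL_r(\bC)}=1$ and $d_k=k$ (from Example (A)) into Theorem \ref{thm:Hodge}. Your $u=v=t$ consistency check against the Poincaré series of Theorem \ref{thm:stack-Poincare} is a correct extra.
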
 
 
 The proof of Theorem \ref{thm:Hodge} will be given in Section \ref{proof_thm_Hodge}.
 
 \subsubsection{K\"{u}nneth decompositions of universal classes}\label{proof_thm_Hodge}

For the proof of Theorem \ref{thm:Hodge}, we need to recall a few facts about the Hodge theory of a complex smooth projective curve and the Atiyah-Bott proof of Theorem \ref{thm:stack-Poincare}.
 
\smallskip

Let $X$ be an irreducible smooth projective curve of genus $g$ over $\bC$. In this subsection, we view it as a compact connected Riemann surface of genus $g$. The underlying real manifold has a canonical orientation, induced by the complex structure of $X$. A Torelli structure on $X$ is a choice of an integral symplectic basis  $(\alpha_j, \beta_j)_{1\leq j\leq g}$  of $(H_1(X;\bZ),\cap)$, where $\cap$ is the intersection pairing.
Let $(\alpha_i^*, \beta_j^*)_{1\leq j\leq g}$ be the dual integral basis of $H^1(X;\bZ)$, so that
$$
\int_{\alpha_i}\alpha_j^* = \int_{\beta_i}\beta_j^*=\delta_{ij}, \quad
\int_{\alpha_i}\beta_j^* = \int_{\beta_i}\alpha_j^*=0. 
$$

\noindent Let $\omega_1,\,\ldots\,,\omega_g$ be holomorphic 1-forms on $X$ such that
\begin{equation}\label{def_omega_j}
\int_{\alpha_i}\omega_j = \delta_{ij}.
\end{equation}
Then 
$$
H^{1,0}(X) =\bigoplus_{j=1}^g \bC \omega_j,\quad H^{0,1}(X) =\bigoplus_{j=1}^g \bC\bar{\omega}_j. 
$$
The period matrix of $\left(X, (\alpha_j,\beta_j)_{1\leq j\leq g}\right)$ is $\tau=(\tau_{ij})$, where
\begin{equation}\label{curveVHS}
	\tau_{ij}= \int_{\beta_i}\omega_j. 
\end{equation}
By Riemann's bilinear relations, $\tau$ is a complex $g\times g$ matrix in the Siegel upper half-space $\cH_g$, where
$$
\cH_g= \left\{ \tau=(\tau_{ij}) \in M_{g\times g}(\bC) \mid \tau_{ij}=\tau_{ji},\;  \Im(\tau)  \textup{ is positive definite } \right\}.
$$
The two bases
$$
(\omega_j, \bar{\omega}_j)_{1\leq j\leq g}, \quad (\alpha_j^*, \beta_j^*)_{1\leq j\leq g}
$$ 
of $H^1(X;\bC)$ are related as follows:
$$
\omega_i = \alpha_i^*  +\sum_{j=1}^g \tau_{ij}\beta_j^*, \quad
\bar{\omega}_i = \alpha_i^* +\sum_{j=1}^g \bar{\tau}_{ij} \beta_j^*.   
$$
\begin{eqnarray*}
\alpha_i^*  &=&  \mathrm{Re}\, \omega_i  - \sum_{j=1}^g \left( \mathrm{Re}(\tau)\,\Im(\tau)^{-1}\right)_{ij} \Im \omega_j \\
&=& \frac{1}{2} \sum_{i=1}^g \left(\delta_{ij}+ \sqrt{-1} \left( \mathrm{Re}(\tau)\,\Im(\tau)^{-1}\right)_{ij}\right)\omega_j +
 \frac{1}{2} \sum_{i=1}^g \left(\delta_{ij} - \sqrt{-1}  \left(\mathrm{Re}(\tau)\,\Im(\tau)^{-1}\right)_{ij}\right)\bar{\omega}_j 
\end{eqnarray*} and  
\begin{eqnarray*}
\beta^*_i &=& \sum_{j=1}^g  \big(\Im(\tau)^{-1}\big)_{ij}\, \Im\omega_j \\
&=&  \frac{-\sqrt{-1}}{2} \sum_{j=1}^g  \big(\Im(\tau)^{-1}\big)_{ij}\, \omega_j
+   \frac{\sqrt{-1}}{2} \sum_{j=1}^g  \big(\Im(\tau)^{-1}\big)_{ij}\,  \bar{\omega}_j\,.
\end{eqnarray*}

\smallskip 

We now recall the definition of the generators of $H^*(\BunGXd;\bQ)$ constructed by Atiyah-Bott  \cite{AB}, known as the Atiyah-Bott classes (see also \cite[Section 3.1]{Heinloth_Schmitt}). For $k=1,\,\ldots\,,r$, 
consider the K\"{u}nneth decomposition of  the characteristic class $I_k(U) \in H^{2d_k}(\BunGXd\times X;\bQ)$ of the universal principal $G$-bundle $U\lra \BunGXd\times X$:
\begin{equation}\label{eqn:Kunneth} 
I_k(U) = h _k\otimes 1 +\sum_{j=1}^g  a^j_k \otimes \alpha_j^* +\sum_{j=1}^g b^j_k \otimes \beta_j^* +  f_k \otimes \omega
\end{equation}
where $\omega\in H^2(X;\bZ)$ is the Poincar\'{e} dual of the class of a point $[x_0] \in H_0(X;\bZ)$, and 
\begin{equation}\label{degree_generators}
h_k \in H^{2d_k}(\BunGXd;\bQ),\quad
a^j_k, b^j_k \in H^{2d_k-1}(\BunGXd;\bQ),\quad
f_k \in H^{2d_k-2}(\BunGXd;\bQ). 
\end{equation} 
Atiyah and Bott showed that 
\begin{equation}\label{eqn:algebra}
H^*(\BunGXd;\bQ) =\bQ[ h_1,\,\ldots\,,h_r, f_{m+1},\,\ldots\,, f_r] \otimes_{\bQ} \Lambda_{\bQ}[ (a^j_k, b^j_k)_{1\leq j\leq g, 1\leq k\leq r}]. 
\end{equation} 
Theorem \ref{thm:stack-Poincare} then follows from \eqref{degree_generators} and \eqref{eqn:algebra}.

 \begin{proof}[Proof of Theorem \ref{thm:Hodge}]
The universal bundle $U\lra \BunGXd \times X$ is algebraic, so 
$$
I_k(U)\in H^{d_k,d_k}(\BunGXd\times X). 
$$
Let $(\omega_j,\bar{\omega}_j )_{1\leq j\leq g}$ be defined as in \eqref{def_omega_j}. Equation \eqref{eqn:Kunneth} can be rewritten as
\begin{equation}\label{eqn:Kunneth-C} 
I_k(U) = h _k\otimes 1 +\sum_{j=1}^g  \theta^j_k \otimes \omega_j  +\sum_{j=1}^g \bar{\theta}^j_k \otimes \bar{\omega}_j +  f_k \otimes \omega
\end{equation}
where
\begin{equation}\label{eqn:theta-a-b}
\theta^j_k = \frac{1}{2} \sum_{i=1}^g \left(\delta_{ij} + \sqrt{-1} \left( \mathrm{Re}(\tau)\Im(\tau)^{-1}\right)_{ij}\right) a^i_k
-\frac{\sqrt{-1}}{2}\sum_{i=1}^g\left(\Im(\tau)^{-1}\right) b^i_k 
\end{equation}
and $\bar{\theta}^j_k$ is the complex conjugate of $\theta^j_k$. 
The K\"{u}nneth decomposition is compatible with the Hodge decomposition (\cite[Proposition~8.2.10]{Deligne_Hodge_III}) and
$$
1\in H^{0,0}(X),\quad \omega_j \in H^{1,0}(X),\quad \bar{\omega}_j\in H^{0,1}(X),\quad \omega\in H^{1,1}(X).
$$
Therefore, 
\begin{equation}\label{eqn:bidegree} 
\begin{aligned}
& h_k \in H^{d_k, d_k}  \left( \BunGXd \right),\quad
\theta^j_k \in H^{d_k-1, d_k }\left( \BunGXd\right),\\
& \bar{\theta}^j_k\in H^{d_k,d_k-1}\left( \BunGXd\right),\quad
f_k \in H^{d_k-1,d_k-1}\left( \BunGXd\right).
\end{aligned} 
\end{equation} 
Equations \eqref{eqn:algebra} and \eqref{eqn:theta-a-b} imply that
\begin{equation}\label{eqn:algebra-C}
H^*(\BunGXd;\bC) =\bC[ h_1,\,\ldots\,,h_r, f_{m+1},\,\ldots\,, f_r] \otimes_{\bC} \Lambda_{\bC}[ (\theta^j_k, \bar{\theta}^j_k)_{1\leq j\leq g, 1\leq k\leq r}]. 
\end{equation} 
Theorem \ref{thm:Hodge} then follows from \eqref{eqn:bidegree} and \eqref{eqn:algebra-C}. 
 \end{proof} 

In \cite{Teleman}, Teleman observes that the Atiyah-Bott isomorphism of $\C$-algebras $$H^*\big(\BunGXd;\C\big) \simeq H^*\big(\Omega(\widetilde{G_{ss}});\C\big) \otimes H^*\big(G;\C\big)^{\otimes 2g} \otimes H^*\big(B G;\C\big)$$ refines to a multiplicative splitting of pure $\Q$-Hodge structures. He points out that the Hodge structures on the cohomology of $\Omega(\widetilde{G_{ss}})$ and $BG$ are Hodge-Tate, while that of $H^*(G;\C)^{\otimes 2g}$ is not: as the proof of Theorem \ref{thm:Hodge} shows, this reflects the fact that the (pure) Hodge structure of the base curve $X$ is not Hodge-Tate, which, in view of the Künneth decomposition \eqref{eqn:Kunneth-C}, causes a generator of $H^*(G;\C)$ of degree $2k-1$ to induce generators of degree $(k,k-1)$ and $(k-1,k)$ in $H^*(\BunGXd;\C)$ \cite[Proposition~4.4]{Teleman}.

The complex algebraic structure on $\BunGXd$ depends on the complex algebraic structure on $X$. In the course of the proof of Theorem \ref{thm:Hodge}, we also computed the variation of Hodge structures on $H^*(\BunGXd;\bC)$ in terms of the variation of Hodge structure on $H^*(X;\bC)$, which is governed by the period matrix $\tau$ of $X$ defined as in \eqref{curveVHS}.  

\begin{corollary}\label{cor:VHS}
	Let $\tau$ be the period matrix of $X$ defined in \eqref{curveVHS}. Then the variation of Hodge structure on $\BunGXd$ is given by the Hodge classes
	\begin{equation}
		\theta^j_k = \frac{1}{2} \sum_{i=1}^g \left(\delta_{ij} + \sqrt{-1} \left( \mathrm{Re}(\tau)\Im(\tau)^{-1}\right)_{ij}\right) a^i_k
		-\frac{\sqrt{-1}}{2}\sum_{i=1}^g\left(\Im(\tau)^{-1}\right) b^i_k \in H^{d_k - 1, d_k}(\BunGXd),
	\end{equation}
	for $k \in \{1,\ \ldots\ , r\}$ and $j \in \{1,\ \ldots\ , g\}$.
\end{corollary}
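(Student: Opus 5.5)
The statement to prove is Corollary~\ref{cor:VHS}. The plan is to read it off from the proof of Theorem~\ref{thm:Hodge}, making explicit how each ingredient depends on the complex structure of $X$.

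\textbf{Step 1: the topological data is rigid.} Fix the Torelli structure $(\alpha_j,\beta_j)_{1\leq j\leq g}$. The rational classes $h_k$, $a^j_k$, $b^j_k$, $f_k$ in \eqref{eqn:Kunneth} come from the Künneth decomposition of $I_k(U)$ against the integral basis $1,\alpha_j^*,\beta_j^*,\omega$. Via \eqref{eqn:BunG-BcG}, $H^*(\BunGXd;\bQ)\simeq H^*(B\cG;\bQ)$ depends only on the underlying $C^\infty$ data. These classes therefore form a flat (locally constant) frame of the local system $H^*(\BunGXd;\bQ)$ as $X$ moves in a family over the Siegel upper half-space $\cH_g$. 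The Atiyah--Bott presentation \eqref{eqn:algebra} holds uniformly in this frame.

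\textbf{Step 2: the Hodge filtration varies only through $\tau$.} Substitute into \eqref{eqn:Kunneth} the expressions of $\alpha_i^*$ and $\beta_i^*$ in terms of $\omega_j,\bar\omega_j$ recorded before the proof. Collecting the coefficient of $\omega_j$ reproduces exactly the formula \eqref{eqn:theta-a-b} for $\theta^j_k$. This is a linear-algebra check: one verifies that the coefficient of $\omega_j$ in $\sum_i a^i_k\alpha_i^*+b^i_k\beta_i^*$ equals the stated combination. I expect this index bookkeeping to be the only real obstacle. In particular, the transpose and symmetry of $\Im(\tau)^{-1}$ must be handled carefully, using $\tau=\tau^T$.

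\textbf{Step 3: identify the Hodge type.} By the compatibility of the Künneth decomposition with Hodge decompositions (\cite[Proposition~8.2.10]{Deligne_Hodge_III}), the fact that $I_k(U)$ has type $(d_k,d_k)$, and $\omega_j\in H^{1,0}(X)$, we get $\theta^j_k\in H^{d_k-1,d_k}(\BunGXd)$. We also have $h_k$ and $f_k$ of Hodge--Tate type, constant in the flat frame.

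\textbf{Step 4: conclude the variation.} By \eqref{eqn:algebra-C}, the Hodge decomposition on the whole algebra is generated multiplicatively by $h_k$, $f_k$ and the exterior generators $\theta^j_k,\bar\theta^j_k$. The only $\tau$-dependent generators are the $\theta^j_k$ and their conjugates. Hence the period map of $H^*(\BunGXd;\bC)$ is completely determined by the displayed classes $\theta^j_k(\tau)$, written in the flat frame $(a^i_k,b^i_k)$. This is precisely the assertion of Corollary~\ref{cor:VHS}. Holomorphic dependence on $\tau$ of the corresponding Hodge subspaces follows because, in the flat frame, the span of the $\theta^j_k$ coincides with the span of $\sum_i\big(a^i_k\,\delta_{ij}+\tau_{ij}\,b^i_k\big)$, and the latter is visibly holomorphic in $\tau$.
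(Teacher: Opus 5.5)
Your Steps 1--3 follow the paper's argument, which is only a pointer to \eqref{eqn:theta-a-b}. You rewrite the Künneth decomposition \eqref{eqn:Kunneth} in the basis $(\omega_j,\bar\omega_j)$ and read off the bidegrees \eqref{eqn:bidegree} from the compatibility of Künneth with Hodge decompositions. That already proves the corollary as stated. Your index bookkeeping will also show that the factor $(\Im(\tau)^{-1})$ in front of $b^i_k$ should carry the indices $ij$. The flat-frame discussion of Step 1 is extra context. Note, though, that a family of curves naturally lives over Teichmüller or Torelli space, with $\tau$ its period map into $\cH_g$; it does not live over $\cH_g$ itself.

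The last sentence of Step 4, however, is false and should be deleted or corrected. Inverting the relation gives, with $\tau=\mathrm{Re}(\tau)+\sqrt{-1}\,\Im(\tau)$,
\[
\theta^j_k=\sum_{l=1}^g\Big(b^l_k-\sum_{i=1}^g\bar\tau_{il}\,a^i_k\Big)\big((2\sqrt{-1}\,\Im(\tau))^{-1}\big)_{lj}.
\]
So the span of the $\theta^j_k$ is the span of the classes $b^j_k-\sum_i\bar\tau_{ij}a^i_k$, which depends antiholomorphically on $\tau$.

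This is forced by Hodge theory. Let $V_k$ be the sub-Hodge structure spanned by the $a^i_k,b^i_k$. The $\theta^j_k$ span $V_k^{d_k-1,d_k}=\overline{F^{d_k}V_k}$. The holomorphically varying piece is $F^{d_k}V_k=V_k^{d_k,d_k-1}$, spanned by the $\bar\theta^j_k$, i.e.\ by the classes $b^j_k-\sum_i\tau_{ij}a^i_k$.

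Your classes $a^j_k+\sum_i\tau_{ij}b^i_k$ span neither space in general; already for $g=1$, $\tau=2\sqrt{-1}$ they fail. They come from substituting $a^i_k,b^i_k$ for $\alpha_i^*,\beta_i^*$ in $\omega_j=\alpha_j^*+\sum_i\tau_{ji}\beta_i^*$. But Künneth coefficients transform contragrediently to the basis they multiply, like the cycles $\alpha_i,\beta_i$. The correct substitution therefore goes through Poincaré duality, $\alpha_i^*\mapsto b^i_k$ and $\beta_i^*\mapsto -a^i_k$, up to an overall sign convention. That substitution produces the span of the $\bar\theta^j_k$, not of the $\theta^j_k$.
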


\begin{proof}
	This is exactly Equation \eqref{eqn:theta-a-b}.
\end{proof}
 
\section{Recursive formulae for semistable strata}\label{recursive_formulae_section}

\subsection{Preliminaries on connected complex reductive Lie groups and their parabolic subgroups}
Let $G$ be a connected complex reductive Lie group. We fix once and for all a maximal torus $H\subset G$ with Lie algebra $\fh\simeq\C^r$, where $r$ is the rank of $G$, and a Borel subgroup $B\supset H$. The choice of $H$ determines a root system $R\subset \fh^\vee$, and the choice of $B$ is equivalent to that of a set of positive roots $R_+\subset R$. This in turn determines a set of simple roots $\Delta\subset R_+$. We shall denote by 
$$
\fg = \fh \oplus \bigoplus_{\alpha\in R} \fg_\alpha
$$ 
the root space decomposition of $\fg$ and by $$\fb = \fh \oplus \bigoplus_{\alpha\in R_+} \fg_\alpha$$ the Lie algebra of $B$.

\smallskip

A parabolic subgroup $P\subset G$ is a subgroup that contains a Borel subgroup. It is called \textit{standard with respect to $B$} if $P\supset B$, and standard parabolic subgroups correspond bijectively to subsets $I\subset \Delta$. Indeed, a standard parabolic subgroup $P\supset B$ is entirely determined by its Lie algebra $\fp \supset \fb$, and the latter is of the form 
$$
\fp = \fh\oplus \bigoplus_{\alpha\in\Gamma} \fg_\alpha$$ where 
$$
\Gamma = R_+ \cup \{\alpha\in R\ |\ \alpha\in\mathrm{span}_\Z(\Delta\setminus I)\}.
$$ 
Define
$$
-\Gamma := \{ -\alpha \mid \alpha \in \Gamma\} = -R_+ \cup  \{ \alpha \in R \mid \alpha \in \mathrm{span}_\Z(\Delta\setminus I)\},
$$
and let
$$
\fl :=\fh \oplus\bigoplus_{\alpha \in \Gamma\cap -\Gamma} \fg_\alpha, \quad \fu:= \bigoplus_{\alpha \in \Gamma, \alpha\notin -\Gamma} \fg_\alpha.
$$
Then
\begin{enumerate}
\item[($\fp1$)] $\fl$, $\fu$ are Lie subalgebras of $\fp$: $[\fl, \fl]\subset \fl$ and $[\fu, \fu]\subset \fu$.
\item [($\fp 2$)] $\fu$ is an ideal of $\fp$: $[\fp, \fu] \subset \fu$. 
\item [($\fp 3$)] $\fp = \fl\oplus \fu$. 
\item [($\fp 4$)] The subalgebra $\fl$ is reductive, and is called the {\em Levi factor} of $\fp$.
\item [($\fp 5$)] The subalgebra $\fu$ is nilpotent, and is called the {\em nilpotent radical} of $\fp$.
\end{enumerate} 
Moreover,  we have the following statements  (P1)-(P5) on Lie subgroups of $P$ which correspond to the above
($\fp 1$)-($\fp 5$) on Lie subalgebras of $\fp$. 
\begin{enumerate}
\item[(P1)] There is a unique closed subgroup $L$ (resp. $U$) of $P$ with Lie algebra  $\fl$ (resp. $\fu$).
\item[(P2)] $U$ is a normal subgroup of $P$, so that $P/U$ is a group. 
\item [(P3)]  The composition of the inclusion  $L\to P$ and the quotient $P\to P/U$ is a group isomorphism: $L\simeq P/U$. Equivalently, 
$P$ is a semi-direct product of $L$ and $U$:  $P = U \rtimes L$.  
\item[(P4)] The subgroup $L$ is reductive, and is called the {\em Levi factor} of $P$.
\item[(P5)] The subgroup $U$ is unipotent, and is called the {\em unipotent radical} of $P$. 
\end{enumerate} 
Let
$$
\fu_- := \bigoplus_{\alpha \in \Gamma, \alpha\notin -\Gamma} \fg_{-\alpha} = \bigoplus_{\alpha  \in -\Gamma, \alpha \notin \Gamma} \fg_\alpha
$$
Then $\fp_- := \fl \oplus \fu_-$ is the parabolic subalgebra of $\fg$ determined by the system of simple roots $-\Delta$ and the subset $-I \subset -\Delta$,
and  $\fl$ (resp. $\fu_-$) is the Levi factor (resp.  nilpotent radical) of $\fp_-$.  We have
\begin{equation} \label{eqn:luu} 
\fg = \fl \oplus \fu \oplus \fu_- = \fp \oplus \fu_-.
\end{equation} 
In particular, $\fg/\fp = \fu_-$. 

\begin{remark}
	We will see after Definition \ref{def_HN_type} that a decomposition of the form \eqref{eqn:luu} is determined by a so-called \textit{Atiyah-Bott point} $\mu \in \overline{C_0} \subset \fh_\bR$, in the following sense:
	\begin{eqnarray*}
		\fl &=& \fh \oplus \bigoplus_{\alpha \in R,  \alpha(\mu)=0} \fg_\alpha, \\
		\fu &=& \bigoplus_{\alpha \in R,  \alpha(\mu)>0} \fg_\alpha = \bigoplus_{\alpha \in R_+, \alpha(\mu)>0} \fg_\alpha, \\
		\fu_- &=&  \bigoplus_{\alpha \in R, \alpha(\mu)<0} \fg_\alpha = \sum_{\alpha \in R_+, \alpha(\mu)>0} \fg_{-\alpha} 
	\end{eqnarray*} 
	Therefore, $\fp=\fl\oplus \fu$ and $I \subset \Delta$ are also determined by $\mu$; indeed $I =\{ \alpha \in \Delta\mid \alpha(\mu)>0\}$. 
\end{remark}

Let $Z_G$ be the center of $G$, and let $(Z_G)_0$ denotes its connected component of the identity. 
Then $(Z_G)_0\simeq  (\bC^*)^m$, where $m$ is defined as in Equation \eqref{group_exponents}.
Let $\fg$, $\fh$, and $\fz_G$ denote the Lie algebras of $G$, $H$,  and $Z_G$, respectively.  We have
$$
\fh = \fz_G \oplus \fh' 
$$ 
where $\fh' = \fh \cap [\fg,\fg]\simeq \bC^{r-m}$. 
We fix a maximal compact subgroup $K\subset G$ and denote by $T$ the compact maximal torus $H\cap K$ in $K$. 
Following \cite{FM}, we define\footnote{The definition of $\fh_\bR$ in this paper agrees with that in \cite{FM} and corresponds to 
$\sqrt{-1}\fh_\bR$ in \cite{Ho_Liu_YM_AMS} (see e.g. \cite[Section 3]{Ho_Liu_YM_AMS}).} 
$$
\fg_\bR :=\sqrt{-1} \Lie(K), \: \fh_\bR :=\fh\cap \fg_\bR,\; \fh'_\bR :=\fh'\cap \fg_\bR, \:  (\fz_G)_\bR := \fz_G \cap \fg_{\bR}.
$$ 
Then $\Lie(K) =\sqrt{-1}\fg_\bR$ and $\Lie(T) =\sqrt{-1}\fh_\bR$. 
Note that $T\cong \U(1)^r$, $\fh_\bR \simeq \bR^r$, $\fh'_\bR\simeq \bR^{r-m}$, and $(\fz_G)_\bR \simeq \bR^m$.  
Let $\fh_\bR^*$ and $(\fz_G)_\bR^*$ be the dual real vector spaces of $\fh_\bR$ and $(\fz_G)_\bR$, respectively. 
Let $[G,G]$ be  {\em derived subgroup} (also known 
the {\em commutator subgroup})  of $G$.  Then $[G,G]$ is a connected semisimple complex reductive Lie group
with $\Lie([G,G])= [\fg,\fg]$, and  $D :=(Z_G)_0\cap [G,G]$ is a finite abelian group. The {\em abelianization} of $G$ is
$$
G^{\ab} := G/[G,G]\cong (Z_G)_0/D \cong (\bC^*)^m. 
$$ 
\begin{enumerate}
\item The {\em coweight lattice} is 
$$
\piH \cong \Hom(\bC^*, H) = \Hom(\U(1), T) \cong \pi_1(T) \cong \{ X\in \fh_\bR \mid \exp(2\pi\sqrt{-1}X) =e \} \subset \fh_\bR,
$$
where $e$ is the identity element of $G$, and $\Hom(\bC^*,H)$ is the cocharacter lattice of the torus $H$. We fix an isomorphism $H\cong  (\bC^*)^r$, which determines a $\bZ$-basis $\{ e_1,\ldots,e_r  \}$ of the lattice $\piH\cong \bZ^r$. We have $\pi_1 H\otimes_{\bZ} \bR =\fh_\bR$.

\item The {\em weight lattice} is the character lattice of $H$:
$$
\widehat{H} := \Hom(H,\bC^*) = \Hom(T, \U(1)).  
$$
It is the lattice in $\fh_\bR^*$ dual to the lattice $\piH \subset \fh_\bR$. Let $\{\theta_1,\ldots, \theta_r\}$ be the $\bZ$-basis
of $\widehat{H}$ dual to the $\bZ$-basis $\{e_1,\ldots,e_r\}$ of the coweight lattice. Then $\langle \theta_i, e_j\rangle =\delta_{ij}$, and  $\widehat{H}\otimes_{\bZ}\bR = \fh_\bR^*$. 

\item   The {\em coroot lattice} of $G$ is the sublattice $\Lambda$ of $\pi_1(H)$ generated by the set $\Delta^\vee$ of simple coroots, and is the kernel
of the (surjective) group homomorphism $\pi_1 H\to \pi_1 G$ induces by the inclusion $H\to G$; therefore, $\piG =\piH/\Lambda$.
We have $\Lambda\otimes_{\bZ}\bR =\fh'_\bR$.  

\item Given $\alpha\in \Delta$, let $\varpi_\alpha$ be the unique element in $\fh^*$ such that
$\langle \varpi_\alpha, \beta \rangle =0$ for $\beta\in \fz_G$ and
$\langle \varpi_\alpha, \beta^\vee\rangle =\delta_{\alpha\beta}$ for all $\beta \in \Delta$.

\item The  {\em character lattice}  of $G$ is $\widehat{G} =\Hom(G,\bC^*) = \Hom (G^\ab, \bC^*)$ which is a lattice
in $(\fz_G)_\bR^*$.   We have $\widehat{G}\otimes_{\bZ}\bR = (\fz_G)_\bR^*$. 

\item Let $\widehat{\Lambda} := (\Lambda \otimes_{\bZ} \bQ)\cap \pi_1 H$ be the saturation of $\Lambda$ in $\pi_1 H$. Then $\pi_1 [G,G] \simeq \widehat{\Lambda}/\Lambda$
is a finite abelian group,   $\pi_1(G^\ab) \simeq \piH/\widehat{\Lambda}$  is a lattice in $(\fz_G)_\bR$ which is the dual of 
the lattice $\widehat{G}$ in $(\fz_G)_\bR$.   We have $\widehat{\Lambda}\otimes_{\bZ}\bR =\fh'_\bR$ and  $\pi_1(G^\ab)\otimes_{\bZ}\bR  = (\fz_G)_\bR$. 
\end{enumerate}

By the above (3) and (5), there is a short exact sequence of multiplicative abelian groups
$$
1\to \pi_1[G,G] \to \piG \to  \pi_1(G^\ab) \to 1
$$
which can be identified with the following short exact sequence of additive groups: 
$$
0 \to \widehat{\Lambda}/\Lambda \to \piH/\Lambda \to \piH/\widehat{\Lambda} \to 0.
$$

\begin{example} \label{GLroots} $G=\GL_r(\bC)$,  $[G,G]=\SL_r(\bC)$.
$$
\piH =\{ \diag(\mu_1,\ldots,\mu_r)\mid \mu_i \in \bZ\} \subset \fh_\bR =\{ \diag(\mu_1,\ldots,\mu_r)\mid \mu_i \in \bR\}.
$$
We choose a $\bZ$-basis $e_1,\ldots, e_r$ of $\piH$ so that    $\diag(\mu_1,\ldots,\mu_r) =\sum_{j=1}^r \mu_j e_j$.  
$$
\begin{aligned}
\Delta  =   &\;  \{ \alpha_i = \theta_i-\theta_{i+1} \mid i=1,\ldots,r-1\} \subset \fh_\bR^*, \\
\Delta^\vee = &\;  \{ \alpha_i^\vee = e_i -e_{i+1} \mid i=1,\ldots, r-1\} \subset \fh_\bR, \\
\Lambda = & \; \Big\{ \diag(\mu_1,\ldots, \mu_r) =\sum_{j=1}^r \mu_j e_j  \; \big| \; \mu_j \in \bZ, \sum_{j=1}^r \mu_j =0 \Big\} =\hLambda,\\
\widehat{G} = & \;  \bZ(\theta_1+\cdots + \theta_r) \subset  (\fz_G)_\bR^* =\bR (\theta_1+\cdots \theta_r),\\
\piG^{\ab} =&\;  \bZ  \frac{ e_1+\cdots + e_r}{r} = \bZ \, \Big( \frac{1}{r} I_r \Big) \subset  (\fz_G)_\bR = \bR I_r,
\end{aligned}
$$
where $I_r$ denotes the $r\times r$ identity matrix. 
$$
\varpi_{\alpha_i} = \theta_1 +\cdots + \theta_i  -\frac{i}{r} \sum_{j=1}^r \theta_j,  \quad i=1,\ldots, r-1. 
$$
In this case $\pi_1[G,G] = \pi_1\SL_r(\bC) =\{1\}$ and $\piG\simeq \piG^\ab = \bZ$. 
\end{example}

\begin{example} \label{SOodd-roots} 
$G=\SO_{2r+1}(\bC)=[G,G]$. Let 
\begin{equation}\label{eqn:J}
J =\begin{bmatrix} 0 &-1\\ 1 & 0 \end{bmatrix}.
\end{equation} 
$$
\piH =\{ \diag(\mu_1 J,\ldots,\mu_rJ, 0 I_1)\mid \mu_i \in \bZ\} \subset \fh_\bR =\{ \diag(\mu_1 J,\ldots,\mu_r J, 0I_1)\mid \mu_i \in \bR\}.
$$
We choose a $\bZ$-basis $e_1,\ldots, e_r$ of $\piH$ so that    $\diag(\mu_1 J,\ldots,\mu_r J, 0I_1) =\sum_{j=1}^r \mu_j e_j$.  
$$
\begin{aligned}
\Delta  =   &\;  \{ \alpha_i = \theta_i-\theta_{i+1} \mid i=1,\ldots,r-1\}  \cup \{ \alpha_r =\theta_r\} \subset \fh_\bR^*, \\
\Delta^\vee = &\;  \{ \alpha_i^\vee = e_i -e_{i+1} \mid i=1,\ldots, r-1\} \cup \{ \alpha_r^\vee = 2e_r \} \subset \fh_\bR, \\
\Lambda = & \; \Big\{ \diag(\mu_1 J,\ldots, \mu_r J,0I_1) =\sum_{j=1}^r \mu_j e_j  \; \big| \; \mu_j \in \bZ, \sum_{j=1}^r \mu_j \in 2\bZ \Big\},\\
\hLambda = &\: \pi_1 H =\bigoplus_{i=1}^r \bZ e_i. 
\end{aligned}
$$
$$
\varpi_{\alpha_i} =  \theta_1 +\cdots + \theta_i ,  \hspace{0.2cm}  i=1,\ldots,r-1; \quad
\varpi_{\alpha_r}= \frac{1}{2}( \theta_1+\cdots+\theta_r) . 
$$
In this case $G^{\ab}$ is trivial and $\pi_1 G =\pi_1 [G,G] = \hLambda/\Lambda = \langle e_r\rangle  \simeq \bZ/2\bZ$. 
 \end{example}

\begin{example} \label{SOeven-roots} 
$G=\SO_{2r}(\bC)=[G,G]$. Let $J$ be as in Equation \eqref{eqn:J}. 
$$
\piH =\{ \diag(\mu_1 J,\ldots,\mu_rJ)\mid \mu_i \in \bZ\} \subset \fh_\bR =\{ \diag(\mu_1 J,\ldots,\mu_r J)\mid \mu_i \in \bR\}.
$$
We choose a $\bZ$-basis $e_1,\ldots, e_r$ of $\piH$ so that    $\diag(\mu_1 J,\ldots,\mu_r J) =\sum_{j=1}^r \mu_j e_j$.  
$$
\begin{aligned}
\Delta  =   &\;  \{ \alpha_i = \theta_i-\theta_{i+1} \mid i=1,\ldots,r-1\}  \cup \{ \alpha_r =\theta_{r-1}+ \theta_r\} \subset \fh_\bR^*, \\
\Delta^\vee = &\;  \{ \alpha_i^\vee = e_i -e_{i+1} \mid i=1,\ldots, r-1\} \cup \{ \alpha_r^\vee = e_{r-1}+ e_r \} \subset \fh_\bR, \\
\Lambda = & \; \Big\{ \diag(\mu_1 J,\ldots, \mu_r J) =\sum_{j=1}^r \mu_j e_j  \; \big| \; \mu_j \in \bZ, \sum_{j=1}^r \mu_j \in 2\bZ \Big\},\\
\hLambda =& \; \pi_1 H = \bigoplus_{i=1}^r \bZ e_i.
\end{aligned}
$$
$$
\begin{aligned}
&  \varpi_{\alpha_i} =  \theta_1 +\cdots + \theta_i ,  \hspace{0.2cm}  i=1,\ldots,r-2; \\ 
& \varpi_{\alpha_{r-1}} = \frac{1}{2}(\theta_1+\cdots +\theta_{r-1} -\theta_r),\quad
\varpi_{\alpha_r} = \frac{1}{2} (\theta_1+\cdots +\theta_{r-1} +\theta_r).
\end{aligned} 
$$
In this case $G^{\ab}$ is trivial and $\pi_1 G= \pi_1 [G,G]=\langle e_r\rangle \simeq \bZ/2\bZ$.
 \end{example}

\begin{example} \label{Sp-roots} When $G=\Sp_r(\bC)=[G,G]$.
$$
\piH =\{ \diag(\mu_1,\ldots,\mu_r, -\mu_1,\ldots, -\mu_r): \mu_j \in \bZ\} \subset \fh_\bR =\{ \diag(\mu_1,\ldots,\mu_r, -\mu_1,\ldots,-\mu_r)\mid \mu_i \in \bR\}.
$$
We choose a $\bZ$-basis $e_1,\ldots, e_r$ of $\piH$ so that    $\diag(\mu_1,\ldots,\mu_r, -\mu_1,\ldots, -\mu_r) =\sum_{j=1}^r \mu_j e_j$.  
$$
\begin{aligned}
\Delta  =   &\;  \{ \alpha_i = \theta_i-\theta_{i+1} \mid i=1,\ldots,r-1\}  \cup \{ \alpha_r = 2\theta_r\} \subset \fh_\bR^*, \\
\Delta^\vee = &\;  \{ \alpha_i^\vee = e_i -e_{i+1} \mid i=1,\ldots, r-1\} \cup \{ \alpha_r^\vee = e_r\} \subset \fh_\bR, \\
\Lambda = & \; \hLambda =\pi_1 H =\bigoplus_{i=1}^r \bZ e_i.
\end{aligned}
$$
$$
\varpi_{\alpha_i} = \theta_1 +\cdots + \theta_i,  \quad i=1,\ldots, r. 
$$
In this case  $G^\ab$ is trivial and $\pi_1 G = \pi_1[G,G]  =\{1\}$.
\end{example}

Let $P_I \subset G$ be the standard parabolic subgroup  corresponding to $I\subset \Delta$. Let
$U_I \subset P_I$ (resp. $L_I\subset P_I$) be the unipotent radical (resp. Levi factor)  of $P_I$. Then
$L_I$ is a complex reductive Lie group. The coroot lattice of $L_I$ is the sublattice $\Lambda_I$ of $\Lambda$ generated
by $\{ \beta^\vee  \mid \beta \in \Delta -I \}$.
We have the following commutative diagram. 
\begin{equation}\label{diagram_with_fund_weights}
\begin{CD}
 0     @.  0\\
 @VVV @VVV\\
0 \ \longrightarrow\  \pi_1[L_I,L_I]=\hLambda_I  /\Lambda_I \  @>{j_{L_I}^{ss}}>> \pi_1[G,G]=\hLambda/\Lambda
@>{\oplus_{\alpha\in I}\varpi_\alpha}>> \oplus_{ \alpha\in I}\bQ/\bZ \\
  @V{i_{L_I} }VV @V{i_G}VV @|\\
  \pi_1 L _I=\pi_1 H/\Lambda_I @>{j_{L_I}}>> \pi_1 G =\pi_1 H /\Lambda
@>{\oplus_{\alpha\in I}\varpi_\alpha}>> \oplus_{\alpha\in I}\bQ/\bZ \\
 @V{p_{L_I}}VV @V{p_G}VV\\
 \pi_1 L_I^\ab =\pi_1(H)/\hLambda_I  @>{j_{L_I}^{\ab}}>> \pi_1 G^\ab=\pi_1 H/\hLambda \\
 @VVV @VVV\\
 0 @. 0 \\
\end{CD}
\end{equation}

\subsection{Topology of principal bundles over Riemann surfaces}  \label{topology} 
Let $G$ be a connected complex reductive Lie group.  Observe that the finite abelian group $\pi_1[G,G]$ is the torsion subgroup of the finitely generated abelian group $\piG$, and the quotient $\pi_1(G^\ab) = \piG/\pi_1[G,G]$ is a finitely generated {\em free} abelian group.  
\begin{definition} \label{baro2} 
Given a $C^\infty$ principal $G$-bundle
$\cP$ over a connected compact Riemann surface $X$, let $\bar{o}_2(\cP) \in \pi_1(G^\ab)$  be the image of
$o_2(\cP) \in \piG$ under the projection $\piG \to \pi_1(G^\ab) =\piG/\pi_1[G,G]$. 
\end{definition}

\smallskip

Let $\phi: G_1 \to G_2$ be a homomorphism between connected complex reductive Lie groups. We have the following commutative diagrams:
$$
\xymatrix{
G_1 \ar[r]^\phi  \ar[d]^{p_1} & G_2 \ar[d]^{p_2}  \\
G_1^\ab = G_1/[G_1,G_1] \ar[r]^{\phi^\ab} & G_2^\ab = G_2/[G_2,G_2] }\qquad
\xymatrix{ 
\piG_1 \ar[r]^{\phi_*} \ar[d]^{p_{1*}}  & \piG_2 \ar[d]^{p_{2*}} \\
\pi_1(G_1^\ab) \ar[r]^{\phi_*^\ab} & \pi_1(G_2^\ab) }
$$
where all the arrows are group homomorphisms, and the vertical arrows are surjective.  Let $\cP_1$ be a principal $G_1$-bundle over a connected compact Riemann surface $X$, so that
$$
o_2(\cP_1) \in \piG_1, \quad  \bar{o}_2(\cP_1)  = p_{1*} o_2(\cP_1) \in \pi_1(G_1^\ab).
$$
Let $\cP_2 = \cP_1\times_{G_1}  G_2= (\cP_1\times G_2)/G_1$, where $G_1$ acts on $\cP_2\times G_2$ on the right by 
$$
(p, g_2)\cdot g_1 = \left(p\cdot g_1, \phi(g_1^{-1}) g_2 \right) , \quad p\in \cP_1,  g_2\in G_2, g_2\in G_1.
$$
Then $\cP_2$ is a principal $G_2$-bundle over $X$, and 
$$
o_2(\cP_2) =\phi_* o_2(\cP_1) \in \piG_2, \quad \bar{o}_2(\cP_2) = p_{2*} o_2(\cP_2)= \phi_* \bar{o}_2(\cP_1) \in \pi_1(G_2^\ab). 
$$

Recall that $\piH/\widehat{\Lambda} \subset (\fz_G)_\bR$ is the dual lattice of the character lattice $\widehat{G} \subset (\fz_G)_\bR^*$. Therefore, 
the pairing $(\fz_G)_\bR^* \times (\fz_G)_\bR \to \bR$ restricts to a perfect pairing
$\langle\ ,  \rangle: \widehat{G} \times \piH/\hLambda \to \bZ$.  In particular $\mu =\bar{o}_2(P)$ can 
be viewed as an element in 
$\Hom(\widehat{G},\bZ)$, sending $\chi\in \widehat{G}$ to  $\langle \chi,\mu\rangle \in \bZ$. We  now provide an alternative
description of  $\langle \chi,\mu\rangle \in \bZ$ which does not rely on the inclusions $\pi_1 H/\hLambda \subset (\fz_G)_\bR$
and $\widehat{G} \subset (\fz_G)_\bR^*$.  Given  any 
$G$ character $\chi \in \widehat{G}=\Hom(G,\bC^*)$, let $L_\chi$ be the holomorphic line bundle over $X$ associated to the
1-dimensional representation $\chi: G\to \bC^*=\GL_1(\bC)$. Then
$$
\langle \chi,\mu\rangle = \deg(L_\chi) =\int_X c_1(L_\chi) \in \bZ. 
$$
Note that $\chi: G\to \bC^*$ is a special case of the group homomorphism $\phi: G_1\to G_2$ in Section \ref{topology}, and
$\chi_* : \piG \to \pi_1\bC^*\simeq \bZ$ sends $o_2(E)$ to $c_1(L_\chi)$. 

\subsection{The Harder-Narasimhan stratification}\label{HN_stratif_section}

Recall that, given a smooth projective curve $X$ and a connected complex reductive group $G$, we denote by $\BunGX$ the moduli stack of principal $G$-bundles on $X$ and, for all $d\in H^2(X;\piG)\simeq\piG$, we denote by $\BunGXd$ the moduli stack of principal $G$-bundles of degree $d$ on $X$. It is a connected smooth Artin stack of dimension $(g-1)\dim G$, where $g$ is the genus of $X$. 

\smallskip

The key notion for the present section is that of \textit{semistability} of a principal $G$-bundle on $X$, which is due to Ramanathan \cite{Ramanathan} and is recalled in Definition \ref{sst_def} below. The moduli stack of semistable principal $G$-bundles of degree $d$ on $X$ will be denoted by $\BunGXdss$. It is an open substack of $\BunGXd$. As we shall recall in Definition \ref{def_HN_type} and Proposition \ref{substack_fixed_HN_type}, the notion of semistability induces the existence of a decomposition of $\BunGXd$ into locally closed smooth algebraic substacks 
\begin{equation}\label{HN_stratif}
\BunGXd = \bigsqcup_{\nu\in\LGd} \Bmu
\end{equation} indexed by the possible \textit{Harder-Narasimhan types} of principal $G$-bundles of degree $d$ on $X$, and in which $\BunGXdss$ is the unique open substack.

\smallskip 

The topological type of a principal $G$-bundle $E$ over $X$ is classified
by its degree $d = o_2(E) \in \piG$ which determines  $\umu =\bar{o}_2(E) \in  \piG^{\ab} =\piH/\widehat{\Lambda}\subset (\fz_G)_\bR$ (cf. Definition 
\ref{baro2}).  As an example, when $G=\GL_r(\C)$ and $d\in \pi_1G\simeq \Z$, one has $\umu  =\frac{d}{r}I_r\in  (\fz_{\GL_r(\bC)})_\bR = \fz_{\GL_r(\bR)} =\bR I_r$ (cf. Example \ref{GLroots}). When $G$ is semisimple,  $G=[G,G]$ and $G^\ab$ is trivial, so $\mu=0$.

\smallskip

Let $E\to X$ be a principal $G$-bundle and let $P\subset G$ be a standard parabolic subgroup. Then
$E/P\to X$ is a fiber bundle with fiber $G/P$ and $E\to E/P$ is a principal $P$-bundle.  
The principal bundle $E\to X$ admits a reduction of the structure group from $G$ to the parabolic subgroup $P$ if and only the fiber bundle
$E/P\to X$ admits a section $\sigma: X\to E/P$. In this case, let $E_P\to X$ be the pullback
of $E/P\to E$: 
\begin{equation} \label{eqn:EP} 
\xymatrix{
E_P \ar[r] \ar[d] & E\ar[d] \\
X \ar[r]^\sigma & E/P
}
\end{equation} 
Then $E_P\to X$ is a principal $P$-bundle and $E = E_P\times_P G$.  

Given a representation $\bV$ of $P$, i.e. a group homomorphism $P\to \GL(\bV)$,  let $E(\bV) = E\times_P \bV \to E/P$ be the associated vector bundle. We have the following short exact sequence of vector bundles
over $E/P$:
\begin{equation}\label{eqn:EEE}
0 \to E(\fp) \lra  E(\fg) \lra E(\fg/\fp)\to 0.
\end{equation} 
The pullback of \eqref{eqn:EEE} under $\sigma:X\lra E/P$ is the following short exact sequence of vector bundles on $X$: 
\begin{equation}\label{eqn:EPE} 
0\to \sigma^* E(\fp) = \ad E_P \lra  \sigma^* E(\fg) = \ad E \lra \sigma^* E(\fg/\fp) = \sigma^* T_{G/P} \to 0
\end{equation} 
where $T_{G/P}$ is the tangent bundle along fibers of $E/P\to X$. It follows from \eqref{eqn:EPE} that 
\begin{equation}\label{eqn:degree} 
\deg (\ad E_P) + \deg (\sigma^* T_{G/P})  = \deg (\ad E) =0
\end{equation} 
where the second equality holds because  the structure group $G$  of $E$ is reductive.

\begin{definition}[Ramanathan \cite{Ramanathan}]\label{sst_def}
Let $G$ be a complex reductive group. A holomorphic principal $G$-bundle $E$ on $X$ is called \textit{stable} (resp. \textit{semistable}) if, for all maximal proper parabolic subgroup $P\subset G$ and all reduction of structure group $\sigma: X\lra E/P$, the following two equivalent conditions hold: 
\begin{enumerate}
\item[(i)] $\deg \left(\sigma^*T_{G/P} \right) > 0$ (resp. $\geq 0$),
\item[(ii)]  $\deg (\ad\, E_P) <0$ (resp. $\leq 0$), where $E_P$ is as in \eqref{eqn:EP}. 
\end{enumerate} 
\end{definition}
The conditions  (i) and (ii) are equivalent by Equation \eqref{eqn:degree}. (i) and (ii)  appear in Definition~1.1 
and Remark~2.2 in \cite{Ramanathan}, respectively. When $E$ is the frame bundle of a holomorphic vector bundle $V\to X$,  Definition \ref{sst_def} is equivalent to
the slope-stability (resp. slope-semistability) of $V$. 

\smallskip

Let $E$ be a principal $G$-bundle on $X$, and assume that $E$ admits a reduction $E_P$ to a standard parabolic subgroup $P\subset G$.
Let $U\subset P$ be its unipotent radical, which is a normal subgroup of $P$. The Levi factor  of $P$ is a reductive subgroup $L\subset P$ such that
$L \simeq P/U$.   
The topological type of $E_L:=E_P/U$ is characterized  by  $\delta= o_2(E_L)\in \piL$, which determines
an element $\mu = \bar{o}_2(L) \in \pi_1(L^{\ab}) \subset (\fz_L)_\bR := \fz_L \cap \fh_\bR \subset \fh_\bR$. 
We obtain a group homomorphism
\begin{equation}\label{eqn:muL}
\mu_L: \pi_1L\to \fh_\bR.
\end{equation}
with $\Ker \mu_L = \pi_1 [L,L]$.

\begin{example}\label{ex_of_HN_type}
The Levi factor of a standard parabolic subgroup of $\GL_r(\bC)$ is of the form
$$
L = \{\diag (A_1,\ldots, A_k) \mid A_i \in \GL_{r_i}(\bC),  r_1+\cdots + r_k =r\} \simeq \GL_{r_1}(\bC)\times \cdots \times  \GL_{r_k}(\bC). 
$$
$$
\ud = (d_1,\ldots, d_k) \in \pi_1 L^\ab \simeq \pi_1  (\bC^*)^k \simeq \bZ^k,\quad 
\umu=\diag \left( \frac{d_1}{r_1} I_{r_1},\cdots, \frac{d_k}{r_k} I_{r_k} \right). 
$$
\end{example}

\begin{remark}\label{sst_for_products}
When $L\simeq G_1\times G_2$ with each $G_i$ reductive, a principal $L$-bundle $E$ is isomorphic to the fiber product $E_1\times_X E_2$, where $E_1:=E/G_2$ is a principal $G_1$-bundle and $E_2:=E/G_1$ is a principal $G_2$-bundle on $X$. Since parabolic subgroups of $G_1\times G_2$ are of the form $P_1\times P_2$ for $P_i\subset G_i$ parabolic, it is readily checked that the principal $G$-bundle $E$ is semistable if and only if $E_1$ is semistable as a $G_1$-bundle and $E_2$ semistable as a $G_2$-bundle (a similar statement holds for stability). Moreover, as semistablility of a principal $\GL_r(\C)$-bundle $E$ is equivalent to the slope-semistability of the associated vector bundle $V:=E(\C^r)$, we can also think of a semistable principal $\GL_{r_1}(\C)\times\GL_{r_2}(\C)$-bundle of topological type $(d_1,d_2)$ as a direct sum $V_1\oplus V_2$ of slope-semistable vector bundles, where $V_i$ has rank $r_i$ and degree $d_i$ (a similar statement again holds for stability). Note that $V_1\oplus V_2$ is never stable as a vector bundle of rank $(r_1+ r_2)$, i.e.\ as a $\GL_{r_1+r_2}(\C)$-bundle, and that it is semistable as such a vector bundle if and only if $\frac{d_1}{r_1}=\frac{d_2}{r_2}$.
\end{remark}

If we denote by $\ov{C_0}$ the closure of the fundamental Weyl chamber 
$$
C_0:=\{X\in\fh_\bR \ |\ \forall\,\alpha\in\Delta,\alpha(X)>0\},
$$ 
we may assume that $\umu\in\ov{C_0}$. In Example \eqref{ex_of_HN_type}, this means that 
$$
\frac{d_1}{r_1}\geq \frac{d_2}{r_2} \geq \cdots \geq \frac{d_k}{r_k}. 
$$
 Now, given a principal $G$-bundle $E$ on $X$, Atiyah and Bott have proved the existence of a canonical reduction to a standard parabolic subgroup $P\subset G$ and satisfying the following conditions, which we phrase in the notation similar to that in \cite{FM} and \cite{Ho_Liu_YM_AMS}.

\begin{theorem}[Atiyah-Bott \cite{AB}]\label{HN_red}
Let $E$ be a principal $G$-bundle on $X$. There exists a unique standard parabolic subgroup $P\subset G$ and a unique reduction $E_P$ of $E$ to $P$ such that:
\begin{enumerate}
\item If $L$ denotes the Levi quotient of $P$, then the associated $L$-bundle $E_L:=E_P\times_P L$ is semistable.
\item If $I_P\subset \Delta$ is the set of simple roots corresponding to $P$ and $\umu\in(\fz_L)_\R \cap\ov{C_0}\subset\fh_\bR$ is the image of $\delta:=o_2(E_L)\in\piL$ under the canonical morphism $\piL\lra\pi_1(L^{\mathrm{ab}})\subset \fh_{\bR}$, then 
$$
\forall\,\alpha\in I_P,\ \alpha(\umu)>0\,.
$$
\end{enumerate}
\end{theorem}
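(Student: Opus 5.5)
Existence comes first. We choose a reduction that extremizes a degree-type quantity, and then show the Levi bundle is semistable and the slope condition holds. Let $\cP_B$ range over the reductions of $E$ to the Borel subgroup $B$; these exist by the Drinfeld--Simpson/Harder theorem, or simply because $E/B \to X$ has sections over a curve. Each reduction $E_B$ gives a degree vector $\umu(E_B)\in\fh_\bR$, obtained as the image of $o_2(E_B)\in\pi_1H$. The set of such vectors is bounded above in the dominance order. To see this, note that $\deg(\ad E_B)\leq \ell$ for some bound $\ell$ depending only on $E$: the line subbundles $\sigma^*E(\fg_\alpha)$ of the fixed bundle $\ad E$ have degrees bounded above by a constant, by the standard boundedness of degrees of subbundles of a fixed vector bundle. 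So we may take a \emph{canonical} candidate, namely a parabolic reduction $E_P$ chosen so that the rational vector $\umu$ is maximal for the Atiyah--Bott partial order (equivalently, $\umu$ maximizes $|\umu|$ among ``convex hull'' data, as in Atiyah--Bott's polygon). Following \cite{AB} and \cite{Ho_Liu_YM_AMS}, I would define $P$ to be the largest parabolic among those realizing the maximal $\umu$, with $I_P=\{\alpha\in\Delta\mid\alpha(\umu)>0\}$.

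Next we verify conditions (1) and (2). Condition (2) is built into the choice of $I_P$ once we know $\umu$ lies in $\ov{C_0}$, which we can arrange by Weyl conjugation, since standard parabolics absorb this. For (1), suppose $E_L$ is not semistable. Then there is a maximal parabolic $Q\subset L$ and a reduction of $E_L$ to $Q$ with $\deg(\ad (E_L)_Q)>0$. The preimage of $Q$ in $P$ is a smaller parabolic $P'\subset G$, and pulling back gives a reduction $E_{P'}$ whose slope vector strictly exceeds $\umu$ in the order, using \eqref{eqn:degree} applied inside $L$. This contradicts maximality.

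Uniqueness is the main obstacle. Given two reductions $E_P$ and $E_{P'}$ satisfying (1) and (2), I would compare them through their relative position. By the Bruhat decomposition, over a dense open subset of $X$ the two sections of $E/P$ and $E/P'$ lie in a fixed $(P,P')$-double coset $PwP'$. I would then consider the induced map between the associated graded $L$- and $L'$-bundles, which yields a nonzero homomorphism between vector bundles built from $E_L$ and $E_{L'}$ through the root spaces $\fg_\alpha$ with $\alpha(\umu)>\alpha(w\umu')$. Semistability of the Levi bundles gives semistability of the associated bundles in characteristic zero (Ramanathan), and a nonzero map between semistable bundles forces an inequality of slopes. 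Applied symmetrically, this gives $\umu=w\umu'$ with $w$ in the Weyl group of $L$, hence $P=P'$, and then $E_P=E_{P'}$ because any two such sections agree generically and therefore everywhere, since $E/P$ is separated. The delicate point is tracking exactly which root-space bundles receive nonzero maps and showing that strict positivity of $\alpha(\umu)$ for $\alpha\in I_P$ rules out every $w\neq 1$. I would handle this as in Atiyah--Bott's uniqueness of the Harder--Narasimhan filtration, reducing to $\GL_n$ through a faithful representation if the direct root-theoretic argument becomes unwieldy.
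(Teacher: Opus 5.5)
The paper does not prove this statement; it cites Atiyah--Bott. The proof of Lemma~\ref{towards_formula_for_codim}(3) does record the feature of that construction that matters here: $\ad(E_P)$ is the slope-$\geq 0$ term $F_0$ of the Harder--Narasimhan filtration of the self-dual bundle $\ad E$.

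\textbf{Uniqueness has a genuine gap.} Your sentence ``applied symmetrically, this gives $\umu=w\umu'$ with $w$ in the Weyl group of $L$, hence $P=P'$'' \emph{is} the uniqueness statement, and you leave it unproved, as you acknowledge. The missing idea is that the characteristic-zero fact you already invoke makes the Bruhat bookkeeping unnecessary. That fact is: if $E_L$ is semistable, so is $E_L\times_L V$ whenever $Z(L)_0$ acts on $V$ by scalars.

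Take a reduction satisfying (1) and (2). Set $\fg_c=\bigoplus_{\alpha(\umu)=c}\fg_\alpha$ for $c\neq 0$, and $\fg_0=\fl$. Then:
\begin{itemize}
\item Each $\bigoplus_{c'\geq c}\fg_{c'}$ is $P$-stable, so these subspaces give a filtration of $\ad E$.
\item Its graded pieces are $E_L\times_L\fg_c$. Each is a direct sum of semistable bundles of slope $c$, and these slopes strictly decrease along the filtration.
\item Hence this is the Harder--Narasimhan filtration of $\ad E$.
\item Because the inequality in (2) is strict, its slope-$\geq 0$ term is exactly $E_P\times_P\fp=\ad(E_P)$.
\end{itemize}
So the subbundle $\ad(E_P)\subset\ad E$ depends only on $E$. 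Distinct standard parabolic subalgebras are not conjugate, so $P$ is determined. Since $N_G(\fp)=P$, $E_P$ is recovered as the set of frames carrying $\fp$ onto $\ad(E_P)$. Your faithful-representation fallback is the same argument with $\ad E$ replaced by $E\times_G V$, using that the stabilizer of the $\umu$-weight flag of a faithful $V$ is $P$. As written, though, it is only named, not carried out.

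\textbf{Existence is a legitimate alternative route, but needs repair.} You take a reduction whose slope is maximal for \eqref{partial_ordering_on_HN_types} and refine it if $E_L$ is unstable. This is a genuinely different, variational route; the construction the paper relies on instead produces $E_P$ directly from $F_0$. Your refinement step for (1) is correct.

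However, (2) cannot be obtained ``by Weyl conjugation.'' Translating a reduction by $\dot w$ gives a reduction to the non-standard parabolic $\dot w^{-1}P\dot w$, and re-standardizing it does not make the slope dominant. Condition (2) must come from maximality instead:
\begin{itemize}
\item Suppose $\alpha(\umu)\leq 0$ for some $\alpha\in I_P$. Extend $E_P$ to the standard parabolic $P''\supsetneq P$ with $I_{P''}=I_P\setminus\{\alpha\}$.
\item Its slope satisfies $\umu''-\umu=-\alpha(\umu)\,\omega$. Here $\omega$ is the fundamental coweight of $[L'',L'']$ attached to $\alpha$, which is a nonnegative rational combination of simple coroots.
\item So either $\umu''>\umu$, or $\umu''=\umu$ with a strictly larger parabolic. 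Both contradict your choices.
\end{itemize}

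Two further points in the existence step:
\begin{itemize}
\item $E_B\times_B\fg_\alpha$ is not a subbundle of $\ad E$, since $\fg_\alpha$ is not $B$-stable. Also, bounding $\deg\ad(E_B)$ controls only one linear functional of $\umu$. Bound each $\varpi_\alpha(\umu)$ instead, via the subbundle $\ad(E_B\times_B P_{\{\alpha\}})\subset\ad E$, whose degree is a positive multiple of $\varpi_\alpha(\umu)$.
\item Boundedness yields maximal elements, not a greatest one. So the claim ``equivalently, $\umu$ maximizes $|\umu|$'' is unjustified, though you do not need it.
\end{itemize}
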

For instance, if $\umu$ is of the form \eqref{ex_of_HN_type}, the second condition in Theorem \ref{HN_red} means that $\frac{d_1}{r_1}>\frac{d_2}{r_2}$.

\begin{definition}\label{def_HN_type}
Given a principal $G$-bundle $E$ on $X$, the unique pair $(P,E_P)$ satisfying the conditions of Theorem \ref{HN_red} is called the \textit{canonical reduction} of $E$, and the pair $(P,\umu)$ is called the \textit{Harder-Narasimhan type}, or \textit{instability type}, of $E$.
\end{definition}

Thus, the HN type (=Harder-Narasimhan type) of a $G$-bundle consists of a standard parabolic subgroup $P$ and an element $\umu\in (\fz_L)_{\R}\subset \fh_\bR$, where $L$ is the Levi factor of $P$. For instance, if $E$ is a semistable $G$-bundle of degree $d$, then $P=G$ and $\umu\in\pi_1G^{\mathrm{ab}}\subset  (\fz_G)_\bR$ is the image of $d\in\piG$ under the canonical morphism $\piG\lra\pi_1 G^{\mathrm{ab}}$. We shall denote this particular element of $\fh_\bR$ by $\umu_{ss}$. Note that, given $P$, an element $\umu\in (\fz_L)_{\R}$ must satisfy certain conditions for $\nu:=(P,\umu)$ to be the HN type of a principal $G$-bundle on $X$. These conditions make $\umu$ an \emph{Atiyah-Bott point} and are described in \cite[Lemma~2.1.2]{FM} and \cite[Lemma~3.1]{Ho_Liu_YM_AMS}: for a fixed $P$, the set of Atiyah-Bott points $\mu\in\ov{C_0}$ is discrete.

\begin{remark}\label{rel_btw_top_type_and_HN_type}
By \cite[Lemma~2.1.2]{FM}, given a principal $G$-bundle $E$ with canonical reduction $(P,E_P)$, the topological type $\ud$ of $E_L$ is entirely determined by the HN type $(P,\umu)$ and the degree $d$ of $E$. 
\end{remark}

\smallskip

For a fixed $d\in\piG$, let us denote by $\LGd$ the set of all possible HN types $\nu=(P,\umu)$ of principal $G$-bundles of degree $d$. There is a partial ordering on $\LGd$, defined by 
\begin{equation}\label{partial_ordering_on_HN_types}
(P,\umu) \leq (P',\umu')\ \mathrm{if}\ P\supset P'\ \mathrm{and}\ \umu'-\umu\in\Q_+R_+^\vee\ \mathrm{in}\ \fh_\bR,
\end{equation} where $\Q_+=\Q\cap[0;+\infty[$ and $R_+^\vee\subset\fh_\bR$ is the set of positive coroots. For instance, $(G,\umu_{ss})\leq (P,\mu)$ for all HN type $(P,\mu)$. In what follows, we will use the notation $\nu_{ss}:=(G,\umu_{ss})$.

\begin{remark}
The partial ordering \eqref{partial_ordering_on_HN_types} can also be described as follows:
\begin{equation}\label{Adams_ordering_on_HN_types}
(P,\umu) \leq (P',\umu')\ \mathrm{if}\ P\supset P'\ \mathrm{and}\ \umu\in \widehat{W\cdot\umu'}
\end{equation} where $W\simeq N(T)/T$ is the Weyl group of $G$ and $\widehat{W\cdot\umu'}$ is the convex hull of the Weyl orbit of $\umu'$ in 
$\Lie(T)=\sqrt{-1}\fh_{\bR}$. If one thinks of the closed Weyl chamber $\ov{C_0}\subset\fh_{\bR}$ as the set of $K$-conjugacy classes in $\Lie(K)=\sqrt{-1}\fg_{\bR}$, the condition $\umu\in \widehat{W\cdot\umu'}$ defines a partial ordering on this set, called the \textit{Adams ordering} in \cite{FM}. Thus, the Adams ordering on $\sqrt{-1}\fh_\bR/\Ad(K)$ induces the partial ordering \eqref{Adams_ordering_on_HN_types}, or equivalently \eqref{partial_ordering_on_HN_types}, on the set $\LGd$ of all HN types of $G$-bundles of topological type $d$, for all $d\in\piG$. 
\end{remark}

\begin{proposition}\label{substack_fixed_HN_type}
For all $\nu\in\LGd$, there exists an open substack $\Blnu \subset \BunGXd$ parameterizing principal $G$-bundles of HN type $\nu'\leq \nu$. As a consequence, there exists a locally closed substack $$\Bnu:=\Blnu \mathbin{\big\backslash} \bigcup_{\nu' < \nu} \fBun_{\leq\nu'}$$ of $\BunGXd$, parameterizing principal $G$-bundles of HN type $\nu$.
\end{proposition}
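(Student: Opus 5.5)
The plan is to show that, for each HN type $\nu=(P,\mu)\in\LGd$, the set of points of $\BunGXd$ whose HN type is $\leq\nu$ is open. The locally closed statement then follows formally. Indeed, once every $\fBun_{\leq\nu'}$ is open, the union $\bigcup_{\nu'<\nu}\fBun_{\leq\nu'}$ is open. Its complement in the open substack $\Blnu$ is therefore closed in $\Blnu$, hence locally closed in $\BunGXd$. We give it the reduced induced structure, which is also the natural structure compatible with the identification of $\Bnu$ with the stack of bundles equipped with a canonical reduction of type $\nu$. By uniqueness of the canonical reduction (Theorem \ref{HN_red}), its points are exactly the bundles of HN type $\nu$.

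For openness, I would prove that the HN type is upper semicontinuous in families. Let $S$ be a scheme of finite type and $E\to S\times X$ a principal $G$-bundle of degree $d$. It suffices to show the following: if $s_0$ specializes from $s$, i.e.\ $s_0\in\overline{\{s\}}$, then $\nu(E_s)\leq\nu(E_{s_0})$ in the ordering \eqref{partial_ordering_on_HN_types}. We must also know that only finitely many HN types occur over a quasi-compact $S$. Combined with constructibility of the HN type, this gives openness of $\{s:\nu(E_s)\leq\nu\}$.

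For semicontinuity I would follow Atiyah--Bott/Behrend. Consider, for each standard parabolic $P$ and each topological type $\delta\in\piL$ of Levi bundle, the stack of $P$-reductions of degree $\delta$. Its morphism to $\BunGX$ is represented by a relative scheme of sections of $E/P\to S\times X$, which is locally of finite type and proper over $S$, because $G/P$ is projective and the degree is fixed. Hence the locus of $s$ admitting a reduction to $P$ with given degrees is closed. The HN type can be characterized extremally: $\mu(E_s)$ is the maximum, in the Adams ordering \eqref{Adams_ordering_on_HN_types}, of the points $\mu_L(\delta)$ over all reductions, equivalently via maximizing the degrees $\langle\chi,\cdot\rangle$ of associated line bundles for dominant characters $\chi$ of $P$. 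A reduction existing at the generic point $s$ therefore specializes, by properness, to a reduction at $s_0$ of the same degree. The maximal destabilizing degrees thus can only increase at $s_0$, which gives $\nu(E_s)\leq\nu(E_{s_0})$.

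Finiteness of occurring types on quasi-compact $S$ follows from boundedness. One takes a faithful representation $G\hookrightarrow\GL_N$. Families of vector bundles over a finite-type base have bounded maximal slope, so the Atiyah--Bott points $\mu$ that appear lie in a bounded region of $\ov{C_0}$. Since these points form a discrete set for each fixed $P$, only finitely many types occur. The main obstacle I expect is making the extremal characterization of the canonical reduction precise for general reductive $G$, in particular comparing two Levi types via the Adams ordering; for that step I would rely on Behrend's/Biswas--Holla's results on canonical reductions and on \cite[Lemma~2.1.2]{FM}.
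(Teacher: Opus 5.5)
Your strategy (upper semicontinuity of the HN type in families, plus boundedness) is what the paper's sketch has in mind: Heinloth's Lecture~5 argument transposed to $G$-bundles. Your formal deduction of local closedness from openness is also fine. The gap is in the semicontinuity step.

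\textbf{The properness claim is false.} The scheme of sections of $E/P\to S\times X$ of fixed degree is of finite type but \emph{not} proper over $S$. A reduction at the generic point $s$ extends only to a rational section over $\Spec R\times X$, regular away from finitely many closed points of $X_{s_0}$. Extending it across those points on the curve $X_{s_0}$ changes the degree. So the locus of $s$ admitting a $P$-reduction of a \emph{given} degree is not closed, and reductions do not specialize with the same degree.

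For example, take points $x\neq z$ of $X$. The map $(s_z^2,s_x)\colon\mathcal{O}(-x)\to\mathcal{O}(2z-x)\oplus\mathcal{O}$, built from the canonical sections of $\mathcal{O}(2z)$ and $\mathcal{O}(x)$, has no common zero. It therefore exhibits $\mathcal{O}(2z-x)\oplus\mathcal{O}$ as a nonsplit extension of $\mathcal{O}(2z)$ by $\mathcal{O}(-x)$. Scaling the extension class to $0$ degenerates it to $\mathcal{O}(2z)\oplus\mathcal{O}(-x)$, which has no line subbundle of degree $1$.

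What is true, and what you actually need, is an inequality. Take a dominant weight $\lambda$ extending to a character of $P$, such as a multiple of $\varpi_\alpha$ with $\alpha\in I_P$. The line subbundle $E_P\times_P\C_\lambda\subset E\times_G V_\lambda$ has a limit subsheaf at $s_0$, and its saturation is the line subbundle of the limiting reduction. So $\langle\lambda,\mu_P\rangle$ can only go \emph{up}. The correct closed loci are those where $E_s$ admits a $P$-reduction with $\varpi_\alpha(\mu_P)\geq c$; closedness also uses boundedness to reduce to finitely many degrees. Proving this, via Quot schemes of associated bundles or Drinfeld's compactification of the stack of $P$-bundles, is the real content of the proposition.

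\textbf{The extremal characterization is misstated.} The Adams ordering is $W$-invariant, but reductions to non-canonical parabolics can have very non-dominant $\mu_Q$. For instance, $\mathcal{O}\oplus\mathcal{O}$ has line subbundles of every sufficiently negative degree $-e$, giving $B$-reductions with $\mu_Q=\diag(-e,e)$; these are not below $\mu_{HN}=0$ in the Adams ordering.

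The correct statement is unsymmetrized (Behrend, Biswas--Holla; it can also be deduced from Ramanan--Ramanathan applied to $E\times_G V_\lambda$). Namely, $\varpi_\alpha(\mu_Q)\leq\varpi_\alpha(\mu_{HN})$ for every reduction to a standard parabolic $Q$ and every $\alpha\in\Delta$. Equality holds for the $P_{\{\alpha\}}$-reduction induced by the canonical one whenever $\alpha(\mu_{HN})>0$; here $P_{\{\alpha\}}$ is the maximal standard parabolic with $I=\{\alpha\}$. The inverse Cartan matrix of a Levi has nonnegative entries, so for dominant $\mu$ this gives $\{s:\mu-\mu_{HN}(E_s)\in\Q_+R_+^\vee\}=\bigcap_{\alpha\in\Delta}\{s:\text{every } P_{\{\alpha\}}\text{-reduction of } E_s \text{ has } \varpi_\alpha\text{-value}\leq\varpi_\alpha(\mu)\}$. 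By the corrected closedness statement this is a finite intersection of open sets, which also removes the need for your unproved constructibility claim.

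\textbf{The condition $P\supset P'$ cannot be obtained.} Even corrected, your argument only compares the $\mu$'s and says nothing about the condition $P\supset P'$ built into \eqref{partial_ordering_on_HN_types}. That condition genuinely fails under specialization. Adding a summand $\mathcal{O}(-x)$ to the family above gives degree-$0$ $\GL_3$-bundles whose HN type jumps from $(B,\diag(1,0,-1))$ to $(P_{\{\alpha_1\}},\diag(2,-1,-1))$. Here $\mu$ increases by $\alpha_1^\vee$, but $B\not\supset P_{\{\alpha_1\}}$. So openness is true, and provable by this method, for the ordering on $\mu$ alone (the Adams ordering on dominant $\mu$), not for \eqref{partial_ordering_on_HN_types} as written.
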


\begin{proof}
The proof is an adaptation of \cite[Lecture~5]{Heinloth_lectures}, from the vector bundle case to the principal bundle case. The key point is that, given a family $T\lra\BunGXd$ of principal $G$-bundles of degree $d$ parameterized by $T$, there exists a canonical decomposition $T=\cup_\nu T_\nu$ of $T$ into locally closed sub-schemes such that, for all geometric point $x$ of $T_\nu$, the corresponding $G$-bundle over $X$ is of HN type $\nu$, and for all geometric point $x'$ of $\ov{T_\nu}$, the corresponding HN type $\nu'$ satisfies $\nu'\geq\nu$.
\end{proof}

\smallskip

\begin{remark}
As we shall see in Formula \eqref{perfection_Poincare_polynomial}, the decomposition of $\BunGXd$ into $(\Blnu)_{\nu\in \LGd}$ is perfect in the sense that the associated Gysin long exact sequence breaks up into short sequences, which makes it possible to express the Poincaré series of $\BunGXd$ in terms of the Poincar\'{e} series of the substacks $\Blnu$. For $G$ reductive, the existence of a \textit{perfect stratification} in that sense was first obtained using the differential point of view of Remark \ref{differential_viewpoint}. Indeed, given a $C^\infty$ principal $G$-bundle $\cP$ on $X$, let us denote by $\cC_\nu$ the set of $(0,1)$-connections on $\cP$ such that the associated holomorphic principal $G$-bundle $E$ on $X$ is of HN type $\nu$, and call $\cC_\nu$ a HN \textit{stratum}. Atiyah and Bott defined a partial ordering $\cC_\nu\preccurlyeq \cC_{\nu'}$ on the set of HN strata, with the property that 
$ \cC_\nu\preccurlyeq \cC_{\nu'} $ if $\cC_{\nu'}\cap\ov{\cC_\nu}\neq \emptyset$, and they proved that if $\cC_\nu\preccurlyeq \cC_{\nu'}$, then $\nu\leq \nu'$ for the Adams ordering. In \cite{FM}, Friedman and Morgan proved that the converse is also true. Thus, the Adams ordering on the set of $K$-conjugacy classes in $\Lie(T) =\sqrt{-1}\fh_\bR$ induces a partial ordering on the set of HN strata of $\BunGXd$. This partial ordering satisfies the property that  $ \ov{\cC_{\nu}} \subset \cup_{\nu'{\geq \nu}} \cC_{\nu'} $. Moreover, each $\cC_\nu$ is of finite codimension $d_\nu$ in $\cC$ and, as a matter of fact, $(\nu\leq\nu') \Rightarrow (d_\nu\leq d_{\nu'})$. While the inclusion $\ov{\cC_{\nu}} \subset \cup_{\nu'{\geq \nu}} \cC_{\nu'}$ is not an equality in general (\cite[\S 4.3]{FM}), meaning that the closure of a stratum is not technically a union of higher strata, the decomposition 
$\cC=\sqcup_\nu \cC_\nu$ is still called a \textit{stratification} of $\cC$ in the literature. Each stratum is invariant under the action of the gauge group $\cG$ of $\cP$ and, in this context (using an equivariant Gysin sequence), Formula \eqref{perfection_Poincare_polynomial} takes the form 
$$
P_t\big(B\cG;\Q\big) = P_t^{\cG}\big(\cC;\Q\big) =  \sum_{\nu\in\LGd} t^{2d_\nu}\,P_t^{\cG}\big(\cC_{\nu};\Q\big)
$$ 
where $P_t^{\cG}(-)$ denotes the Poincaré series of the $\cG$-equivariant cohomology $\Q$-algebra $H^*_{\cG}(-;\Q)$. This is usually summed up by saying that the stratification $\cC=\sqcup_{\nu\in\LGd} \cC_\nu$ is $\cG$-equivariantly perfect.
\end{remark}

\subsection{Codimension of the strata}

We now recall how to compute the codimension of $\Bnu$ in $\Blnu$, or equivalently $\BunGXd$. To do it, it suffices to compute the rank of the normal bundle to $\Bnu$. Let $E$ be a principal $G$-bundle of HN type $\nu=(P,\umu)$ and denote by $E_P$ the canonical reduction of $E$ to $P$ of Theorem \ref{HN_red}. As recalled in \eqref{infinitesimal_deformations}, the fiber at $E$ of the tangent stack to $\BunGXd$ is the quotient stack 
\begin{equation}\label{fiber_of_tgt_bundle_to_mod_stack}
[H^1(X;\ad\,E)/H^0(X;\ad\,E)]\,,
\end{equation} where $H^0(X;\ad\,E)$ acts trivially on $H^1(X;\ad\,E)$. To determine the fiber at $E$ of the normal bundle to $\Bnu$ in $\Blnu$, consider the short exact sequence of $P$-modules $$0\lra \fp \lra \fg \lra \fg/\fp \lra 0$$ and the induced short exact sequence of vector bundles 
$$
0\lra E_P\times_P\fp \lra E_P\times_P\fg \lra E_P\times_P(\fg/\fp) \lra 0\,
$$ which can also be written
\begin{equation}\label{ad_E_P_perp}
0\lra \ad(E_P) \lra \ad(E) \lra \ad(E)/\ad(E_P)\lra 0\,.
\end{equation}
The associated cohomology exact sequence then reads:
$$
\ldots\,\lra H^0\big(X; \ad(E)/\ad(E_P)\big) \lra H^1\big(X;\ad(E_P)\big) \lra H^1\big(X;\ad(E)\big) \lra H^1\big(X;\ad(E)/\ad(E_P)\big) \lra 0\,.
$$ We deduce from this and \eqref{fiber_of_tgt_bundle_to_mod_stack} that the fiber at $E$ of the normal bundle to $\Bnu$ in $\BunGXd$ is the quotient stack 
\begin{equation}\label{fiber_or_normal_bundle}
\left[H^1\big(X;\ad(E)/\ad(E_P)\big)\, \big/\, H^0\big(X;\ad(E)/\ad(E_P)\big)\right]\,
\end{equation} where $H^0(X;\ad(E)/\ad(E_P))$ acts trivially on $H^1(X;\ad(E))$.  As we shall see in Lemma \ref{towards_formula_for_codim}, one actually has $H^0(X;\ad(E)/\ad(E_P))=0$, so the fiber at $E$ of the normal bundle to $\Bnu$ is in fact just the vector space $H^1(X;\ad(E)/\ad(E_P))$. For now, we deduce from \eqref{fiber_or_normal_bundle} that the rank of the normal bundle to $\Bnu$ is equal, by Riemann-Roch, to
\begin{eqnarray*}
& & \dim H^1\big(X;\ad(E)/\ad(E_P)\big) - \dim H^0\big(X;\ad(E)/\ad(E_P)\big) \\
& = & -\deg\big(\ad(E)/\ad(E_P)\big) + (g-1)\mathrm{rk}\big(\ad(E)/\ad(E_P)\big)\\
& = & \deg\big(\ad(E_P)\big) + (g-1)\dim(\fg/\fp)
\end{eqnarray*} where the last inequality follows from the short exact sequence \eqref{ad_E_P_perp}, the fact that $\deg\,\ad(E)=0$, and the fact that $\ad(E)/\ad(E_P) \simeq E_P\times_P(\fg/\fp)$.

\begin{lemma}\label{towards_formula_for_codim}
Let $E$ be a principal $G$-bundle of HN type $\nu=(P,\mu)$ on $X$, and let $E_P$ be its canonical reduction. Then:
\begin{enumerate}
\item $\dim(\fg/\fp) = \#\,\big\{\alpha\in R_+\ |\ \alpha(\umu)>0\big\}$.
\item $\deg\,\ad(E_P) = \sum_{\alpha\in R_+\ |\ \alpha(\umu)>0}\alpha(\umu)$.
\item $H^0(X;\ad(E)/\ad(E_P))=0$.
\end{enumerate}
\end{lemma}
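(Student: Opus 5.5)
Part (1) is pure Lie theory, and I would settle it first. By the Remark following \eqref{eqn:luu}, the Atiyah-Bott point $\umu$ determines the decomposition $\fg=\fl\oplus\fu\oplus\fu_-$ with $\fu_-=\bigoplus_{\alpha\in R_+,\,\alpha(\umu)>0}\fg_{-\alpha}$, and we have $\fg/\fp\simeq\fu_-$. Since each root space is one-dimensional, $\dim(\fg/\fp)=\#\{\alpha\in R_+\mid \alpha(\umu)>0\}$. This needs only the condition $\alpha(\umu)>0$ for $\alpha\in I_P$ from Theorem \ref{HN_red}, together with $\umu\in(\fz_L)_\bR\cap\ov{C_0}$, which ensures that roots of $\fl$ vanish on $\umu$ and that the remaining positive roots are positive on $\umu$.

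For (2), I would filter $\fp$ as a $P$-module. Since $\fp=\fl\oplus\fu$ with $\fu$ an ideal, $E_P\times_P\fp$ has a filtration whose graded pieces are $E_L\times_L\fl$ and $E_L\times_L\fu$ (more precisely, the pieces of a filtration of $\fu$ by root height, on which $U$ acts trivially). The degree of $\ad(E_P)$ is therefore $\deg(\ad E_L)+\deg(E_L\times_L\fu)$. The first term vanishes because $\ad E_L$ is self-dual ($L$ reductive). For the second, I decompose $\fu$ under the center $Z_L^0$ into weight spaces. The determinant of each $L$-representation $\fg_\chi$ is a character of $L$, and its degree is computed through the pairing $\langle\chi,\umu\rangle=\deg L_\chi$ of Section \ref{topology} applied to $L$. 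Summing over the roots in $\fu$, this gives $\deg\det(E_L\times_L\fu)=\sum_{\alpha\in R_+,\,\alpha(\umu)>0}\alpha(\umu)$. The small point to check is that $\det\fu$, as a character of $L$, restricts on $H$ to $\sum_{\alpha\in\fu}\alpha$, and that pairing a character with $\bar o_2(E_L)$ agrees with evaluating the corresponding linear form at $\umu\in(\fz_L)_\bR$. The latter holds because characters of $L$ kill $\hLambda_L$.

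For (3), I use the isomorphism $\ad(E)/\ad(E_P)\simeq E_P\times_P(\fg/\fp)$ and filter $\fg/\fp\simeq\fu_-$ as a $P$-module by height, so that $U$ acts trivially on the graded pieces. Each graded piece $V_j$ is an $L$-module on which $Z_L^0$ acts through characters $-\alpha$ with $\alpha(\umu)>0$. I further split it into $Z_L^0$-isotypic pieces, which are $L$-submodules. The associated bundle $E_L\times_L V$ for such a piece is semistable, since bundles associated to semistable $L$-bundles via representations sending $Z_L^0$ to the center are semistable (Ramanathan; in characteristic zero semistability is preserved under associated representations). Its slope is $-\alpha(\umu)<0$ by the computation in (2), so it has no nonzero sections. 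By the long exact sequences for the filtration, $H^0=0$.

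The main obstacle is this semistability of associated bundles: it is an external input (Ramanathan--Ramanan), which I would cite rather than reprove.
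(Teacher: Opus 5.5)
Your proposal is correct. Part (1) is the paper's root count. Part (2) is essentially the paper's argument: the paper pairs the character $\det\fp$ of $P$ directly with $\mu$ (this character is trivial on $U$, hence a character of $L$). Your splitting off of $\ad(E_L)$ and filtering of $\fu$ is therefore harmless but unnecessary.

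Part (3) takes a genuinely different route. The paper uses the self-duality of $\ad(E)$ together with a fact taken from Atiyah--Bott's construction of the canonical reduction: $\ad(E_P)$ is the term $F_0$, the part of slopes $\geq 0$, of the Harder--Narasimhan filtration of $\ad(E)$. Every HN slope of $\ad(E)/\ad(E_P)$ is then negative. You instead filter $E_P\times_P(\fg/\fp)$ by $P$-submodules. You then use Theorem~\ref{HN_red}(1) and Ramanan--Ramanathan to show that the graded pieces $E_L\times_L V$ are semistable of slope $-\alpha(\mu)<0$.

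The paper's route is shorter, but it presupposes $F_0=\ad(E_P)$, which is not part of the characterization of the canonical reduction in Theorem~\ref{HN_red}. Yours works directly from that characterization and isolates the one external input. In fact, run your argument on all of $\fg$, with $\fg_{\geq c}$ the sum of the $\ad\,\mu$-eigenspaces of eigenvalue $\geq c$. It shows that the bundles $E_P\times_P\fg_{\geq c}$ have semistable graded pieces of strictly decreasing slopes $c$, i.e.\ they form the HN filtration of $\ad(E)$. That is exactly what the paper takes for granted, so at bottom both routes use the same characteristic-zero semistability input.

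One point to make precise: your filtrations must be by $\ad\,\mu$-eigenvalues, or by height counted only in the simple roots of $I$. Ordinary root height is not $L$-stable in general. Already for the parabolic of block type $(2,1)$ in $\GL_3(\bC)$, $\fg/\fp$ is an irreducible $L$-module containing root spaces of heights $-1$ and $-2$.
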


\begin{proof}
Let us prove these results.
\begin{enumerate}
\item Recall the root space decomposition $\fg=\fh\oplus \bigoplus_{\alpha\in R}\fg_\alpha\,.$ Let $I\subset\Delta$ be the subset of the set of simple roots that corresponds to $\fp$ in the sense that $\fp=\fh\oplus\bigoplus_{\alpha\in\Gamma}\fg_\alpha\,,$ where $\Gamma = R_+ \cup\{\alpha\in R\ |\ \alpha\in\mathrm{span}_\Z(\Delta-I)\}$. Equivalently, $\fg/\fp \simeq \bigoplus_{\alpha\notin\Gamma}\fg_\alpha$. Since the root spaces are $1$-dimensional over $\C$, one has $$\dim\fp\ =\ \dim\fh\, +\, \#\,\{\alpha\in R\ |\quad \alpha\in\Gamma\}\quad \text{and}\quad \dim\fg-\dim\fp\ =\ \#\,\{\alpha\in R\ |\ \alpha\notin\Gamma\}\,.$$ But, by definition of the HN type (Theorem \ref{HN_red}), if $E$ of HN type $(P,\mu)$, we have $\mu\in\ov{C_0}$, so for all $\alpha\in\Delta$, $\alpha(\mu)\geq 0$, and, more precisely: $$\forall\, \alpha\in I, \alpha(\mu)>0\quad \mathrm{and}\quad\forall\,\alpha\in\Delta-I, \alpha(\mu)=0,$$ where the second condition follows from the fact that $\mu\in\fz_{L_\R}$ (by definition of $\mu$). Equivalently, $\alpha\in\Gamma$ if and only if $\alpha\in R_+$ or $\alpha(\mu)=0$. So, $\alpha\notin\Gamma$ if and only if $\alpha\in R_-$ and $\alpha(\mu)<0$. Therefore:
$$
\dim\fg-\dim\fp\ =\  \#\,\{\alpha\in R_-\ |\ \alpha(\mu)<0\} \ =\ \#\,\{\alpha\in R_+\ |\ \alpha(\mu)>0\}\,.
$$
\item 
We have 
$$
\fp = \fh \oplus \bigoplus_{\alpha \in \Gamma} \fg_\alpha, 
$$
so $H$ acts on $\det \fp = \Lambda^{\dim \fp} \fp \simeq\bC$ by the character
$$
\chi:= \sum_{\alpha \in \Gamma} \alpha   \in \widehat{H} =\Hom(H,\bC^*). 
$$
The $H$-action on $\fp$ is the restriction of the adjoint representation of $P$, so $\det \fp$ is a one-dimensional representation of 
$P$, and $\chi$ is contained in the sublattice $\widehat{P}=\widehat{L}\subset \widehat{H}$. 
$$
\begin{aligned} 
\deg\, \ad(E_P) = &  \deg  \det (\ad(E_P)) = \deg (E_P \times_P \det\fp) = \deg(E_L \times_L \det \fp) \\
= &  \langle \chi, \mu\rangle =\sum_{\alpha\in \Gamma} \alpha(\mu) =\sum_{\alpha \in R_+ \ |\ \alpha(\mu)>0} \alpha(\mu).
 \end{aligned} 
 $$
\item Recall that if $V$ is a vector bundle with Harder-Narasimhan filtration $$0=V_0\subset V_1\subset\,\ldots\, \subset V_\ell =V\,,$$ (such that the $V_i/V_{i-1}$ are semistable and the slopes $\mu_i:=\mu(V_i/V_{i-1})$ satisfy $\mu_1>\,\ldots\,>\mu_\ell$), one denotes $\sup\,V :=\mu_1$ and $\inf\,V:=\mu_\ell$. Moreover, if $u:W \lra V$ is a morphism of vector bundles on $X$, then $(\sup\,V<\inf\,W) \Rightarrow (u=0)$. Indeed, a morphism between two semistable bundles is zero if the slope of the target is strictly smaller than the slope of the source, which implies the desired result by double induction on the slopes of the HN filtrations (\cite[Lemma~10.2]{AB}). So, to prove that $H^0(X;V)=0$, it suffices to show that $\sup \,V<0$, since this will imply that $\Hom(O_X,V)=0$. Let us now apply this to $V=\ad(E)/\ad(E_P)$. As $G$ is reductive, the adjoint bundle $\ad(E)$ carries a non-degenerate quadratic form $\kappa$, so it is a self-dual bundle, and its Harder-Narasimhan filtration is of the form 
$$
0\subsetneq F_{-k} \subset F_{1-k} \subset\,\ldots\,\subset F_{-1}\subset F_0 \subset F_1 \subset \,\ldots\, \subset F_k =\ad(E)
$$ 
with $F_{-j} = F_{j-1}^{\perp_\kappa}$ and $F_{1-j}/F_{-j} \simeq (F_j/F_{j-1})^\vee$. By construction of the canonical reduction in \cite{AB}, $F_0=\ad(E_P)$. In particular, the HN filtration of $\ad(E)/\ad(E_P)$ is induced by that of $\ad(E)$, from which we get that 
$\sup \ad(E)/\ad(E_P) = \mu(F_1/F_0)  = -\mu(F_0/F_{-1}) <0$.  We conclude that $H^0(X, \ad E/\ad E_P)=0$. 
\end{enumerate}
\end{proof}

From Lemma \ref{towards_formula_for_codim} and the discussion preceding it, one deduces the following result, which is due to Atiyah and Bott.

\begin{proposition}\cite[p.~592]{AB}
Let $\nu=(P,\umu)\in\LGd$ be a HN type for $G$-bundles of degree $d$. For all $E\in\Bnu$, the dimension of $H^1(X;E_P\times_P(\fg/\fp))$ depends only on $\nu$ and is given by the formula 
\begin{equation}\label{codim_of_stratum}
d_\nu:=\sum_{\alpha\in R_+\ |\ \alpha(\umu)>0} \big(\alpha(\umu)+(g-1)\big).
\end{equation} Formula \eqref{codim_of_stratum} gives the codimension of $\Bnu$ in $\Blnu$, or equivalently $\BunGXd$.
\end{proposition}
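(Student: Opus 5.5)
The plan is to assemble the proposition directly from the Riemann--Roch computation carried out just before Lemma \ref{towards_formula_for_codim}, together with the three parts of that lemma. Fix $\nu=(P,\umu)\in\LGd$ and $E\in\Bnu$ with canonical reduction $E_P$ (Theorem \ref{HN_red}). I set $V:=\ad(E)/\ad(E_P)\simeq E_P\times_P(\fg/\fp)$. This is a vector bundle of rank $\dim(\fg/\fp)$, and by the short exact sequence \eqref{ad_E_P_perp} together with $\deg\ad(E)=0$ it has $\deg V=-\deg\ad(E_P)$.

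First, by part (3) of Lemma \ref{towards_formula_for_codim}, $H^0(X;V)=0$. Riemann--Roch then gives
$$
\dim H^1(X;V) = -\deg V+(g-1)\,\mathrm{rk}\,V=\deg\ad(E_P)+(g-1)\dim(\fg/\fp).
$$
Next I substitute parts (1) and (2) of the lemma: $\dim(\fg/\fp)=\#\{\alpha\in R_+\mid\alpha(\umu)>0\}$ and $\deg\ad(E_P)=\sum_{\alpha\in R_+,\,\alpha(\umu)>0}\alpha(\umu)$. Regrouping the two sums over the same index set yields exactly \eqref{codim_of_stratum}. The right-hand side involves only $\umu$ and the root data of $P$, so it depends only on $\nu$ and not on the particular $E$ in the stratum.

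For the codimension statement, I use the description \eqref{fiber_or_normal_bundle} of the fiber of the normal bundle to $\Bnu$. Since $H^0(X;V)=0$, that fiber is the honest vector space $H^1(X;V)$, whose dimension is constant along $\Bnu$ by the above. The rank of the normal bundle is therefore $d_\nu$, which is the codimension of the smooth locally closed substack $\Bnu$ in the open substack $\Blnu$. Since $\Blnu$ is open in $\BunGXd$, this is also the codimension in $\BunGXd$.

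The only genuinely delicate point is the identification of the normal bundle with $H^1(X;\ad(E)/\ad(E_P))$. This requires knowing that the tangent complex of $\Bnu$ at $E$ is $\mathbf{R}\Gamma(X,\ad E_P)[1]$, i.e.\ that $\Bnu\simeq\fBun_{P_X}^{\,\ud,L\text{-}ss}$ locally (the canonical reduction being unique, so the forgetful map from $P$-bundles with semistable Levi quotient is a monomorphism onto the stratum). Along with the surjectivity $H^1(\ad E)\to H^1(V)$ from the long exact sequence, I take this as given from the discussion preceding the lemma.
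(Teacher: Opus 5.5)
Your proposal is correct and follows the same route as the paper. The paper deduces the proposition from the Riemann--Roch computation of the rank of the normal bundle, whose fibre is \eqref{fiber_or_normal_bundle}, combined with the three parts of Lemma \ref{towards_formula_for_codim}: vanishing of $H^0(X;\ad(E)/\ad(E_P))$, $\dim(\fg/\fp)=\#\{\alpha\in R_+\mid\alpha(\umu)>0\}$, and $\deg\ad(E_P)=\sum_{\alpha(\umu)>0}\alpha(\umu)$; like the paper, you take the identification of the normal fibre via the deformation theory of the canonical reduction as given.
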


As one sees from Formula \eqref{codim_of_stratum}, the unique open stratum of $\BunGXd$ is $\BunGXdss$.

\subsection{The Gysin exact sequence}

We refer to \cite[Corollary~2.1.3]{Behrend_trace_formula_Inventiones} for a general approach to Gysin sequences for smooth algebraic stacks. The special case that we need is the following, in which we denote by $\Bslnu$ the open substack $$\Bslnu:= \bigsqcup_{\nu'<\nu}\fBun_{\leq\nu'}.$$ The results of \cite{Behrend_trace_formula_Inventiones} indeed apply because the canonical morphisms $\Bnu\hookrightarrow \Blnu$ are closed immersions of smooth stacks.

\begin{proposition}
For all HN type $\nu\in\LGd$, consider the open substacks $\Bslnu\subset\Blnu\subset\BunGXd$, and denote by $d_\nu$ the complex codimension of $\Bnu=\Blnu\setminus\Bslnu$ in $\BunGXd$. Then the canonical morphism $\Bslnu\overset{i_\nu}{\hookrightarrow}\Blnu$ induces a long exact sequence of $\C$-vector spaces
\begin{equation}\label{Gysin_sequence}
\ldots\,\lra H^{k-2d_\nu}\big(\Bnu;\C\big) \lra H^{k}\big(\Blnu;\C\big) \overset{i_\nu^*}{\lra} H^k\big(\Bslnu;\C\big)\lra\,\ldots\ .
\end{equation}
\end{proposition}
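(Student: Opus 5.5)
The plan is to obtain the sequence \eqref{Gysin_sequence} as the long exact sequence of the pair $(\Blnu,\Bslnu)$, and then to identify the relative cohomology with a shift of $H^*(\Bnu;\C)$ by a Thom isomorphism. Concretely:
\begin{enumerate}
\item By Proposition \ref{substack_fixed_HN_type}, $\Bslnu$ is open in $\Blnu$ with closed complement $\Bnu$.
\item The morphism $\Bnu\hookrightarrow\Blnu$ is a closed immersion of smooth algebraic stacks, and its codimension is $d_\nu$ by \eqref{codim_of_stratum}.
\item The long exact sequence in cohomology with supports, which for stacks is computed on a smooth atlas or simplicial resolution as in \cite{Behrend_trace_formula_Inventiones}, reads
$$\ldots\lra H^k_{\Bnu}(\Blnu;\C)\lra H^k(\Blnu;\C)\overset{i_\nu^*}{\lra}H^k(\Bslnu;\C)\lra H^{k+1}_{\Bnu}(\Blnu;\C)\lra\ldots$$
It then suffices to establish a purity (Thom) isomorphism $H^k_{\Bnu}(\Blnu;\C)\simeq H^{k-2d_\nu}(\Bnu;\C)$.
\end{enumerate}

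For purity, I would reduce to schemes. Choose a smooth atlas $Y\to\Blnu$, or better a simplicial smooth hypercover $Y_\bullet$. The fibre product $Z_\bullet=Y_\bullet\times_{\Blnu}\Bnu$ is a closed smooth subscheme of each $Y_n$. It has complex codimension $d_\nu$, because codimension is preserved under smooth base change.

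For a closed immersion of smooth complex varieties of codimension $c$, one has the local cohomology statement $\mathcal{H}^i_Z(\underline{\C})=0$ for $i\neq 2c$, together with $\mathcal{H}^{2c}_Z(\underline{\C})\simeq\underline{\C}_Z\otimes\mathrm{or}$. The orientation sheaf is trivial because the normal bundle is complex. This gives the Thom isomorphism $H^k_Z(Y;\C)\simeq H^{k-2c}(Z;\C)$, and it is functorial under smooth pullback. The isomorphisms are therefore compatible with the face maps of the hypercover, and cohomological descent assembles them into the desired isomorphism for the stacks.

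A cross-check is available via the equivariant picture of Remark \ref{differential_viewpoint}, where $\Bnu$ corresponds to $\cC_\nu$, which has finite codimension $d_\nu$ in $\cC$. The same Gysin sequence is then the $\cG$-equivariant Thom–Gysin sequence of Atiyah–Bott. This comparison relies on \eqref{eqn:BunG-BcG}.

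I expect the main obstacle to be verifying the technical hypotheses for stacks. The stacks $\Blnu$ are only locally of finite type, so one should restrict to finite-type open pieces, or note that cohomology in each fixed degree stabilises. One also needs smoothness of $\Bnu$ itself. Smoothness of $\Bnu$ follows because $\Bnu$ is identified with the stack of semistable $L$-bundles of type $\delta$ together with a $P$-extension. Lemma \ref{towards_formula_for_codim}(3) shows that $H^0$ of the normal complex vanishes, so $\Bnu\to\fBun_{L_X}^{\delta,ss}$ is a vector-bundle-stack fibration, and this makes its normal bundle an honest rank-$d_\nu$ vector bundle. Once these points are settled, the result of \cite[Corollary~2.1.3]{Behrend_trace_formula_Inventiones} applies directly.
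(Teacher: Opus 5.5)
Your proposal is correct and follows essentially the same route as the paper, which gets the sequence from the cohomology-with-supports long exact sequence for the closed smooth substack $\Bnu\subset\Blnu$ (via \cite[Corollary~2.1.3]{Behrend_trace_formula_Inventiones}), combined with the Thom isomorphism $H^{k-2d_\nu}(\Bnu;\C)\simeq H^k_{\Bnu}(\Blnu;\C)$; your hypercover and local-purity argument simply spells out what that citation provides. One small correction: the vanishing $H^0(X;\ad(E)/\ad(E_P))=0$ is what makes the normal bundle of $\Bnu$ an honest rank-$d_\nu$ vector bundle, whereas the fibration $\Bnu\to\fBun_{L_X}^{\,\delta,ss}$ is governed by $E_P\times_P\fu$ and does not depend on that lemma.
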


The Gysin sequence \eqref{Gysin_sequence} holds with rational coefficients, too, but we write it with complex coefficients to emphasize that all $\C$-vector spaces in this sequence have a pure Hodge structure. As we shall see in \eqref{comp_Gysin_sequence_Hodge_structures}, the $\C$-linear maps in \eqref{Gysin_sequence} are compatible with these Hodge structures. 

\smallskip

For now, recall that the Atiyah-Bott strategy to prove that the long exact sequence \eqref{Gysin_sequence} breaks up into short exact sequences of $\C$-vector spaces
\begin{equation}\label{perfection}
0\lra H^{k-2d_\nu}\big(\Bnu;\C\big) \lra H^{k}\big(\Blnu;\C\big) \overset{i_\nu^*}{\lra} H^k\big(\Bslnu;\C\big)\lra 0
\end{equation} is to prove that the Euler class $e(\cN_\nu)$ of the normal bundle to $\Bnu$ is not a zero divisor in the $\C$-algebra $H^*(\Bnu;\C)$, so that taking the cup product with this class yields an injective map. Moreover, there is a commutative diagram 
\begin{equation}\label{Euler_class}
\begin{tikzcd}
H^{k-2d_\nu}\big(\Bnu;\C\big) \arrow[r] \arrow[rd, "\cdot\,\cup e(\cN_\nu)"'] & H^k\big(\Blnu;\C\big) \ar[d, "\mathrm{restr.}"] \\
& H^k\big(\Bnu;\C\big)
\end{tikzcd}
\end{equation} which proves that the Gysin exact sequence \eqref{Gysin_sequence} breaks up into short exact sequences \eqref{perfection}. Now, all $\C$-vector spaces in Diagram \eqref{Euler_class} carry a pure Hodge structure, and the complex Euler class of $\cN_\nu$ is of type $(d_\nu,d_\nu)$, so the cup product with $e(\cN_\nu)$ is compatible with the Hodge structures, in the following sense (\cite[Proposition~8.2.11]{Deligne_Hodge_III}).

\begin{lemma}\label{AB_lemma_Hodge_case}
There are short exact sequences
\begin{equation}\label{Hodge_perfection}
0\lra H^{p-d_\nu,q-d_\nu}\big(\Bnu\big) \lra H^{p,q}\big(\Blnu\big) \overset{i_\nu^*}{\lra} H^{p,q}\big(\Bslnu\big)\lra 0\ .
\end{equation}
\end{lemma}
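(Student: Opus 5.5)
The plan is to refine the Atiyah--Bott perfection argument so that it respects the Hodge decomposition. Start from the Gysin long exact sequence \eqref{Gysin_sequence} and the factorization \eqref{Euler_class}: the Gysin map followed by restriction to $\Bnu$ equals cup product with $e(\cN_\nu)$. All three spaces $H^*(\Bnu;\C)$, $H^*(\Blnu;\C)$ and $H^*(\Bslnu;\C)$ carry pure Hodge structures. This uses the Kubrak--Prikhodko degeneration result, which applies to the Harder--Narasimhan strata and to the open substacks formed as finite unions of strata, as recalled in Section \ref{Hodge_theory_for_stacks}.

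\textbf{Step 1: the maps are morphisms of Hodge structures.} The restriction maps $i_\nu^*$ and $H^*(\Blnu)\to H^*(\Bnu)$ are pullbacks along algebraic morphisms, so they are morphisms of Hodge structures of bidegree $(0,0)$. The Gysin map is a morphism of mixed Hodge structures with Tate twist $(-d_\nu)$, by Deligne \cite[Proposition~8.2.11]{Deligne_Hodge_III} applied to a smooth atlas, or to the finite-dimensional approximations that compute the cohomology in a fixed degree. Consequently it sends $H^{p-d_\nu,q-d_\nu}(\Bnu)$ into $H^{p,q}(\Blnu)$. Because all structures involved are pure, morphisms of Hodge structures are strict with respect to the Hodge decomposition. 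Hence every map in the sequence decomposes into a direct sum of maps between $(p,q)$-components, with the shift $(d_\nu,d_\nu)$ on the stratum term.

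\textbf{Step 2: split the sequence.} By the Atiyah--Bott argument, $e(\cN_\nu)$ is not a zero divisor in $H^*(\Bnu;\C)$. This comes from the identification of $H^*(\Bnu)$ with $H^*(\fBun_{L_X}^{\delta,ss})$ and the existence of a central torus in $L$ acting with nonzero weights on the fibers of $\cN_\nu$; since these are established facts we only invoke them. It follows that cup product with $e(\cN_\nu)$ is injective, so the Gysin map is injective and the long exact sequence splits into the short exact sequences \eqref{perfection}. A short exact sequence of pure Hodge structures whose maps are morphisms of bidegree $(0,0)$ after the twist remains exact on each $(p,q)$-component, since a direct sum of sequences is exact exactly when each summand is. This yields \eqref{Hodge_perfection}.

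\textbf{Main obstacle.} The delicate point is Step 1 for stacks. We must make sure that the Gysin and restriction maps on stacky cohomology are genuinely morphisms of Hodge structures with the stated twist, and that the Hodge structure on $H^*(\Blnu)$ is pure so that strictness applies. I would handle this by approximating the stacks by smooth quasi-projective varieties that compute cohomology up to a given degree (for example bounded quotient presentations $[R/\GL_N]$ with $\GL_N$ replaced by finite Grassmannian approximations), and then applying Deligne's functoriality there. The Euler class is algebraic, so it lies in $H^{d_\nu,d_\nu}$, and the twist is automatic.
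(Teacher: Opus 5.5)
Your proposal is correct and is essentially the paper's argument. The Gysin sequence is compatible with the Hodge decomposition because the Thom/Euler class of $\cN_\nu$ has type $(d_\nu,d_\nu)$, and the Atiyah--Bott fact that $e(\cN_\nu)$ is not a zero divisor makes the Gysin map injective. The only difference is the order of steps: you split the sequence in total degree first and then pass to $(p,q)$-components, whereas the paper first builds the $(p,q)$-graded Hodge--Gysin long exact sequences (from the cohomology-with-supports sequence and the Thom isomorphism) and then uses injectivity.
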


\begin{proof}
Since the Euler class $e(\cN_\nu)$ is of type $(d_\nu,d_\nu)$, we have a commutative diagram
\begin{equation}\label{mult_by_Euler_class}
\begin{tikzcd}
H^{p-d_\nu,q-d_\nu}\big(\Bnu\big) \arrow[r] \arrow[rd, "\cdot\,\cup e(\cN_\nu)"'] & H^{p,q}\big(\Blnu\big) \ar[d, "\mathrm{restr.}"] \\
& H^{p,q}\big(\Bnu\big)
\end{tikzcd}
\end{equation}
and since this Euler class is not a zero divisor in the $\C$-algebra $H^*(\Bnu;\C)$, the first morphism in the following piece of the Hodge-Gysin exact sequence
\begin{equation}\label{Hodge_Gysin}
\ldots\,\lra H^{p-d_\nu,q-d_\nu}\big(\Bnu\big) \lra H^{p,q}\big(\Blnu\big) \overset{i_\nu^*}{\lra} H^{p,q}\big(\Bslnu\big)\lra\,\ldots
\end{equation} is an injective map, which proves the result.
\end{proof}

\begin{remark}
The Hodge-Gysin exact sequence \eqref{Hodge_Gysin} is obtained as follows. For convenience in the notation, we set $X:=\Blnu$ and $Y:=\Bnu$. Note that the substack $Y$ is closed in $X$. So, by \cite[(7)~p.138]{Behrend_trace_formula_Inventiones}, if we consider a sheaf of Abelian groups $\mathscr{F}$ on $X$, there is a long exact sequence 
\begin{equation}\label{cohomology_with_supports}
\ldots\,\lra H^k_Y(X;\mathscr{F}) \lra H^k(X;\mathscr{F}) \overset{i_U^*}{\lra} H^k(U;\mathscr{F}|_U) \lra \ldots
\end{equation} where $U:=X\setminus Y$ is open in $X$, with canonical inclusion $i_U:U\hookrightarrow X$, and $H^k_Y(X;\mathscr{F})$ is the $k$-th right derived functor of $\Ga_Y(X;-)$, the functor of global sections with support in $Y$. Note that when $\mathscr{F}=\C$, we can also think of cohomology groups with coefficients in $\mathscr{F}$ as de Rham cohomology groups. Recall now that the Thom class of the normal bundle $N_{Y/X}$ induces an isomorphism $H^{k-2d}(Y;\C)\overset{\simeq}{\lra} H^k_{Y}(X;\C)$, where $d=\mathrm{rank}_{\C}(N_{Y/X}) = \mathrm{codim}_{\C}(Y/X)$. As the Thom class is of type $(d,d)$, the Thom isomorphism induces isomorphisms $$H^{p-d,q-d}(Y)\overset{\simeq}{\lra} H^{p,q}_Y(X)$$ for all $(p,q)$ such that $p+q=k$. So the long exact sequence \eqref{cohomology_with_supports} induces long exact sequences 
\begin{equation}\label{comp_Gysin_sequence_Hodge_structures}
\ldots\,\lra H^{p-d,q-d}(Y) \lra H^{p,q}(X) \overset{i_U^*}{\lra} H^{p,q}(U) \lra \ldots
\end{equation} and, when $X=\Blnu$, $Y=\Bnu$ and $d=d_\nu$, we get indeed \eqref{Hodge_Gysin} from \eqref{comp_Gysin_sequence_Hodge_structures}. In sum, the morphisms in the Gysin sequence \eqref{Gysin_sequence} are compatible with the Hodge structures on each term in the sequence, and this is how one obtains the Hodge-Gysin exact sequence \eqref{Hodge_Gysin}. Also, the commutativity of Diagram \eqref{mult_by_Euler_class} follows from the fact that the Euler class is the pullback of the Thom class by the zero section of the normal bundle.
\end{remark}

Lemma \ref{AB_lemma_Hodge_case} implies that the perfection of the Harder-Narasimhan stratification is compatible with the Hodge structure of $H^*(\BunGXd;\C)$. In the Atiyah-Bott case, the fact that the Gysin sequence \eqref{Gysin_sequence} breaks up into short exact sequences \eqref{perfection} implies that the Betti numbers of $\BunGXd$ can be computed from the Betti numbers of its HN strata: 
\begin{equation}\label{perfection_Poincare_polynomial}
P_t\big(\BunGXd;\C\big) = \sum_{\nu\in\LGd} t^{2d_\nu}\,P_t\big(\Bnu;\C\big)\,.
\end{equation} This implies that only those strata whose codimension is lower than $\frac{j}{2}$ contribute to the cohomology in degree $j$: $$\forall\, j\geq 0,\ H^j\big(\BunGXd;\C\big) = H^j\big(\cup_{\nu\,|\,d_\nu\leq \frac{j}{2}} \Bnu;\C\big)\,.$$ Indeed, if $d_\nu> \frac{j}{2}$, the short exact sequence \eqref{perfection} reduces to an isomorphism $ H^{j}(\Blnu;\C) \simeq H^j(\Bslnu;\C)$. Note that the set $\{\nu\in \LGd\,|\, d_\nu\leq\frac{j}{2}\}$ is finite (\cite[(1.18)]{AB}). 

\smallskip

In view of Lemma \ref{AB_lemma_Hodge_case}, Formula \eqref{perfection_Poincare_polynomial} can be refined in the following way.
 
\begin{theorem}\label{perfection_thm}
For all $d\in\piG$, let $\LGd$ be the set of Harder-Narasimhan types of principal $G$-bundles on $X$. Then the Hodge-Poincaré polynomial of $\BunGXd$ satisfies $$HP_{u,v}\big(\BunGXd\big)\ \ = \sum_{\nu\in\LGd} (uv)^{d_\nu}\,HP_{u,v}\big(\Bnu\big)$$ where $\Bnu\subset\BunGXd$ is the stack of $G$-bundles of HN type $\nu$ on $X$ and $d_\nu$ is given by Formula \eqref{codim_of_stratum}.
\end{theorem}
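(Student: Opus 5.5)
The plan is to derive the identity from the short exact sequences of Lemma \ref{AB_lemma_Hodge_case} by a telescoping argument over the partially ordered set $\LGd$, done one bidegree $(p,q)$ at a time. Fix $(p,q)$ with $k=p+q$. Since $H^{p,q}(\fM)$ is a direct summand of $H^{k}(\fM;\C)$, the dimension identities for the Hodge pieces will follow once everything is reduced to finitely many strata. For every $\nu\in\LGd$, Lemma \ref{AB_lemma_Hodge_case} gives
$$
\dim H^{p,q}\big(\Blnu\big) = \dim H^{p,q}\big(\Bslnu\big) + \dim H^{p-d_\nu,q-d_\nu}\big(\Bnu\big).
$$
I will use this in the form of an induction that adds one stratum at a time.

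The first step is the finiteness reduction. Only finitely many $\nu$ have $d_\nu\leq k/2$ \cite[(1.18)]{AB}, and if $d_\nu>k/2$ then $H^{p-d_\nu,q-d_\nu}(\Bnu)=0$ because $p-d_\nu$ or $q-d_\nu$ is negative. Consequently the restriction $H^{p,q}(\Blnu)\to H^{p,q}(\Bslnu)$ is an isomorphism for such $\nu$. I will choose the finite set $S_k=\{\nu : d_\nu\leq k/2\}$. It is downward closed, since $\nu'\leq\nu$ implies $d_{\nu'}\leq d_\nu$ (noted in the remark on the stratification). Then I will show that the open substack $\mathcal{U}_k:=\bigcup_{\nu\in S_k}\Bnu$ satisfies $H^{p,q}(\BunGXd)\simeq H^{p,q}(\mathcal{U}_k)$. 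This is the Hodge-refined version of the statement $H^j(\BunGXd)=H^j(\cup_{d_\nu\leq j/2}\Bnu)$ recalled after \eqref{perfection_Poincare_polynomial}. I would justify it by the same reasoning: removing strata of codimension greater than $k/2$ does not change $H^k$, using the Gysin sequence \eqref{comp_Gysin_sequence_Hodge_structures} together with a limit argument over the open substacks $\Blnu$ exhausting $\BunGXd$.

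The second step is the induction on the finite set $S_k$. I will enumerate $S_k$ as $\nu_1=\nu_{ss},\nu_2,\ldots,\nu_N$, refining the partial order to a linear one, and set $\mathcal{U}^{(i)}=\bigcup_{j\leq i}\fBun_{\nu_j}$. These are open, and $\fBun_{\nu_i}$ is closed in $\mathcal{U}^{(i)}$ of codimension $d_{\nu_i}$. The Gysin argument of Lemma \ref{AB_lemma_Hodge_case} applies verbatim to the pair $(\mathcal{U}^{(i)},\mathcal{U}^{(i-1)})$, since the Euler class of the normal bundle is still not a zero divisor and is of type $(d_{\nu_i},d_{\nu_i})$. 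Summing the resulting dimension identities gives
$$
\dim H^{p,q}(\BunGXd)=\sum_{\nu\in S_k}\dim H^{p-d_\nu,q-d_\nu}(\Bnu)=\sum_{\nu\in\LGd}\dim H^{p-d_\nu,q-d_\nu}(\Bnu),
$$
where the second equality holds because the terms with $\nu\notin S_k$ vanish. Multiplying by $u^pv^q$ and summing over $(p,q)$ gives the stated identity in $\Z[\![u,v]\!]$. For each fixed $(p,q)$ only finitely many terms contribute, so the infinite sum converges $(u,v)$-adically, as the factor $(uv)^{d_\nu}$ already suggests.

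The main obstacle is the bookkeeping in the second step. The statement of Lemma \ref{AB_lemma_Hodge_case} is phrased with $\Bslnu$, a union over all $\nu'<\nu$, rather than with an arbitrary order ideal such as $\mathcal{U}^{(i-1)}$, so I need the Gysin and Euler-class argument for the intermediate opens. I will handle this by observing that the argument only uses that $\Bnu$ is a smooth closed substack of the ambient open of codimension $d_\nu$, together with the Atiyah--Bott non-zero-divisor property of $e(\cN_\nu)$ on $H^*(\Bnu)$. Both properties are local to a neighbourhood of $\Bnu$ and independent of which other strata have been removed. Purity of the Hodge structures on all these opens is assumed, via Kubrak--Prikhodko as recalled in Section \ref{Hodge_theory_for_stacks}.
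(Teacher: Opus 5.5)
Your proposal is correct and follows the same route as the paper, which derives the theorem directly from the Hodge-refined Gysin short exact sequences of Lemma \ref{AB_lemma_Hodge_case}, together with the Atiyah--Bott finiteness of $\{\nu : d_\nu\leq k/2\}$, exactly as in the derivation of \eqref{perfection_Poincare_polynomial}. You make explicit the bookkeeping that the paper leaves implicit: telescoping one bidegree at a time along a linear refinement of the order on $\LGd$, running the Gysin/Euler-class argument for arbitrary open unions of strata rather than only for the pairs $(\Blnu,\Bslnu)$, and the truncation-and-limit step.
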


\subsection{Hodge-Poincaré series of moduli stacks of semistable bundles}

In this subsection, we obtain a recursive formula for the Hodge-Poincaré series of the moduli stack $\BunGXdss$.

\smallskip

First, we see from Theorem \ref{perfection_thm}, that 
\begin{equation}\label{formula_before_inductive_step}
HP_{u,v}\big(\BunGXdss\big)\ \ =\ \ HP_{u,v}\big(\BunGXd\big)\quad - \sum_{\nu\in\LGd\setminus\{\nu_{ss}\}} (uv)^{d_\nu}\,HP_{u,v}\big(\Bnu\big).
\end{equation} The inductive step consists in expressing $HP_{u,v}(\Bnu)$, for $\nu\neq\nu_{ss}$, in terms of the Hodge-Poincaré series of a moduli stack of \emph{semistable} bundles for a certain reductive subgroup $L\subset G$. Let $\nu=(P,\umu)\in\LGd$ be a HN type for $G$-bundles of degree $d$. As recalled in Remark \ref{rel_btw_top_type_and_HN_type}, if $E$ is a principal $G$-bundle of degree $d$ and HN type $(P,\mu)$, the principal $L$-bundle $E_L:=E_P\times _P L$, where $L$ is the Levi quotient of $P$, has a well-defined topological type $\ud_\mu\in\pi_1L$ (since $d$ is fixed throughout, we do not indicate the dependence of $\delta_\mu$ on $d$ in the notation). So, by Theorem \ref{HN_red}, there is a morphism of algebraic stacks
\begin{equation}\label{inductive_step}
\begin{array}{ccc}
\Bnu = \fBun_{(P,\,\umu)} & \lra & \BunLXmuss \\
E & \lmt & E_L:=E_P\times_P L
\end{array}
\end{equation} where $\BunLXmuss$ denotes the moduli stack of semistable $L$-bundles of topological type $\ud_\mu$ on $X$ (since $L=P/R_u(P)$, where $R_u(P)$ is the unipotent radical of $P$, one also has $E_L = E_P/R_u(P)$). It is now a consequence of the results of Atiyah and Bott that \eqref{inductive_step} is an acyclic fibration. Since Hodge structures are functorial and compatible with products (\cite{Deligne_Hodge_III}), we have $HP_{u,v}(\Bnu) = HP_{u,v}(\BunLXmuss)$. Thus, Formula \eqref{formula_before_inductive_step} becomes the following recursive formula:
\begin{equation}\label{recursive_formula_for_HP_series}
HP_{u,v}\big(\BunGXdss\big)\ \ =\ \ HP_{u,v}\big(\BunGXd\big)\quad - \sum_{\nu=(P,\,\umu)\in\LGd\setminus\{\nu_{ss}\}} (uv)^{d_\nu}\,HP_{u,v}\big(\BunLXmuss\big)
\end{equation} where:
\begin{itemize}
\item $HP_{u,v}(\BunGXd)$ is given by Theorem \ref{thm:Hodge},
\item $d_\nu$ is given by Formula \eqref{codim_of_stratum} and, 
\item for all HN type $\nu=(P,\umu)$, we let $L:=P/R_u(P)$ be the Levi quotient of the parabolic subgroup $P$, which can be seen as a reductive subgroup of $G$, and then $HP_{u,v}\big(\BunLXmuss\big)$ is given by Formula \eqref{HP_series_of_BunLXd}.
\end{itemize} Formula \eqref{recursive_formula_for_HP_series} is a refinement of the usual formula for the Poincaré series of $\BunGXdss$ in the sense that, if we set $u=v=t$ in \eqref{recursive_formula_for_HP_series}, we get indeed, as in \cite{AB}:
$$
P_{t}\big(\BunGXdss\big)\ \ =\ \ P_{t}\big(\BunGXd\big)\quad - \sum_{\nu=(P,\,\umu)\in\LGd\setminus\{\nu_{ss}\}} t^{2d_\nu}\,P_{t}\big(\BunLXmuss\big)\,.
$$

In order to apply Formula \eqref{recursive_formula_for_HP_series} to compute $HP_{u,v}\big(\BunGXdss\big)$ recursively, it suffices to have a concrete understanding of $\LGd$ for all all reductive group $G$. For classical groups, one can for instance consult \cite[\S 3.4]{Ho_Liu_YM_AMS}.

\begin{example}\label{example_rank_2_vb}
When $G=\GL_2(\C)$, we have $\piG\simeq \Z$ and, tensoring by a line bundle if necessary, we only need to consider the cases when $d=0$ and $d=1$. With respect to the standard upper triangular Borel subgroup $B=\mathbf{Tsup}_2(\C)$, the possible HN types for a $\GL_2(\C)$-bundle of degree $d$ are 
$$\nu_{ss} = \big(\GL_2(\C),d\big)\ \ \mathrm{and}\ \ \nu_k=\big(\mathbf{Tsup}_2(\C),(k,d-k)\big),\ k>\frac{d}{2}\,.$$ If $\nu\neq \nu_{ss}$, the associated Levi subgroup is the maximal torus $L=T\simeq\C^*\times\C^*$, so all $L$-bundles are semistable in this case. When $d=0$, one has $d_{\nu_k} = 2k+g-1$ for all $k\geq 1$, so Formula \ref{recursive_formula_for_HP_series} gives 
$$HP_{u,v}\big(\fBun_{\GL(2;\C)}^{\, 0,ss}\big) = \frac{(1+u)^g(1+v)^g}{1-uv}\, \frac{(1+u^2v)^g(1+uv^2)^g}{(1-uv)(1-u^2v^2)} - \frac{(uv)^{g+1}}{1-u^2v^2}\,\left(\frac{(1+u)^g\,(1+v)^g}{1-uv}\right)^2$$ while, for $d=1$, one has $d_{\nu_k} = 2k+g-2$ for all $k\geq 1$, so Formula \ref{recursive_formula_for_HP_series} gives 
\begin{equation}\label{towards_HP_polyn_of_mod_space}
HP_{u,v}\big(\fBun_{\GL(2;\C)}^{\, 1,ss}\big) = \frac{(1+u)^g(1+v)^g}{1-uv}\, \frac{(1+u^2v)^g(1+uv^2)^g}{(1-uv)(1-u^2v^2)} - \frac{(uv)^g}{1-u^2v^2}\,\left(\frac{(1+u)^g\,(1+v)^g}{1-uv}\right)^2\,.
\end{equation} As we shall see in Example \ref{example_rank_2_vb_bis}, the Formula for $d=1$ (for the moduli \emph{stack} of semistable vector bundles of rank $2$ and degree $1$) is consistent with the results of Earl and Kirwan in \cite{Earl_Kirwan} (for the moduli \emph{space} of semistable vector bundles of rank $2$ and degree $1$).
\end{example}

\section{Inversion of recursive formulae}\label{inversion_section}

\subsection{The closed formula}

Recall that, given a smooth projective curve $X$, a connected complex reductive group $G$ and an element $d\in H^2(X;\piG)\simeq\piG$, we denote by $\BunGXdss$ the moduli stack of semistable principal $G$-bundles of degree $d$ on $X$. It is a connected smooth Artin stack of dimension $(g-1)\dim G$, where $g$ is the genus of $X$. We fix a maximal torus $H\subset G$ and denote by $R$ the associated root system. We also choose a Borel subgroup $B\supset H$ and denote by $R_+\subset R$ the associated set of positive roots. This determines a set of simple roots $\Delta\subset R_+$. For all $\alpha\in R$, the corresponding coroot is denoted by $\alpha^\vee$. And for all $\alpha\in \Delta$, the associated fundamental weight is denoted by $\varpi_\alpha$. We also fix a maximal compact subgroup $K\subset G$ and we let $T$ be the compact maximal torus $H\cap K$. As in Section \ref{HN_stratif_section}, we view $\pi_1H$ as a lattice in $\sqrt{-1}\ft$, where $\ft=Lie(T)$, and we denote by $\Lambda\subset \pi_1H$ the sub-lattice generated by the simple coroots $\Delta^\vee$.
We let $\widehat{\Lambda} := (\Lambda \otimes_{\bZ} \bQ)\cap \pi_1 H$ be the saturation of $\Lambda$ in $\pi_1 H$.

\smallskip

Let us now consider the fundamental weights $\{\varpi_\alpha \mid \alpha\in \Delta\}$ of $G$. By definition, the family $\{\varpi_\alpha \mid \alpha\in \Delta\}$ is a basis of
the real vector space $\Hom(\Lambda\otimes_\Z\R;\R)$ which is dual to the basis
$\Delta^\vee=\{ \alpha^\vee \mid \alpha\in\Delta\}$ of the real vector space $\Lambda\otimes_\Z\R\subset \sqrt{-1}\ft$. Given $\alpha\in \Delta$, we extend $\varpi_\alpha:\Lambda\otimes_\Z\R \lra \bR$ to $\varpi_\alpha:\i\ft= (\Lambda\otimes_\Z\R) \oplus \i\fz_{G_\R} \lra \bR$ by zero on $\i\fz_{G_\R}$, where $\fz_{G_\R}=\mathrm{Z}(\fg)\cap\ft$. Then $\varpi_\alpha$ takes integral values on the lattice $\Lambda=\oplus_{\alpha\in \Delta} \bZ \alpha^\vee\subset \Lambda\otimes_\Z\R \subset \sqrt{-1}\ft$, and rational values on the larger lattice $\pi_1H \subset \sqrt{-1}\ft$.

\smallskip

In Section \ref{proof_of_inversion}, we will prove the following closed formula for the Hodge-Poincar\'{e} series of moduli stacks of semistable principal $G$-bundles on $X$.

\begin{theorem} \label{thm:HP}
For all $d\in\piG$, the Hodge-Poincaré series of the moduli stack $\BunGXdss$ is equal to the expansion in $\Z[\![u,v]\!]$ of the element of $\Q(u,v)$ defined by the right-hand side of the following formula:
$$
HP_{u,v}(\BunGXdss)  = 
\sum_{I\subseteq \Delta}
(-1)^{\dim Z(L^I)-\dim Z_G}
a_{u,v}(L^I_X)
\frac{(uv)^{(g-1)\dim U^I}}
{\prod_{\alpha\in I}
\big(1-(uv)^{2\varrho^I(\alpha^\vee)}\big)}\cdot
(uv)^{2\sum_{\alpha\in I}\varrho^I(\alpha^\vee)
\langle \varpi_\alpha(d)\rangle}
$$
where, for all subset $I\subset \Delta$ of the set of simple roots of $G$, we denote by:
\begin{itemize}
\item $P^I$ the parabolic subgroup of $G$ associated to $I\subset\Delta$, 
\item $L^I$ the Levi quotient of $P^I$,
\item $U^I$ the unipotent radical of $P^I$, 
\item $a_{u,v}(L^I_X)$ the Hodge-Poincaré series 
$$
a_{uv}(L^I_X) =  \left(\frac{ (1+u)^g(1+v)^g }{1-uv} \right)^{\dim Z(L^I)} \prod_{k=\dim Z(L^I)+1}^{\mathrm{rk}(L^I)}
 \frac{(1+u^{d_k(L^I)} v^{d_k(L^I)-1})^g (1+u^{d_k(L^I)-1} v^{d_k(L^I)})^g}{\big(1- (uv)^{d_k(L^I)-1}\big)\big(1-(uv)^{d_k(L^I)}\big)}\,.
$$
of the moduli stack of all principal $L^I$-bundles on $X$ of a fixed degree $\delta\in \pi_1 L_I$, as computed in \eqref{HP_series_of_BunLXd},
\item $\varrho^I\in\fh^*$ the linear form on the Cartan sub-algebra $\fh\subset \fg$ defined by 
$$
\varrho^I\quad =\quad \frac{1}{2}\sum_{\tiny \begin{array}{c}\beta\in R_+\ |\ \exists\,\alpha\in I\\
 \langle \beta,\alpha^\vee \rangle >0\end{array}} \beta
$$
\end{itemize}
and, moreover,
\begin{itemize}
\item $\varpi_\alpha(d)\in \bQ/\bZ$ is the image of $d$, as defined in Diagram \eqref{diagram_with_fund_weights}, ,
\item and $\langle x\rangle \in\bQ$ is the unique representative of the class $x\in \bQ/\bZ$ such that $0< \langle x\rangle\leq 1$.
\end{itemize}
\end{theorem}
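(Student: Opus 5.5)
The plan is to invert the recursion \eqref{recursive_formula_for_HP_series} in the manner of Laumon--Rapoport and \cite[Appendix A]{Ho_Liu_YM_AMS}, working now with power series in two variables. The key observation is that every term in \eqref{recursive_formula_for_HP_series} depends on $(u,v)$ in a restricted way. The factors $a_{u,v}(L_X)$ do not depend on $\delta$. The codimension weights $(uv)^{d_\nu}$ depend only on the product $t:=uv$. So I would regard everything as living in a $\bZ[\![t]\!]$-module $A$: for instance the $\bZ[\![u,v]\!]$-module generated by the $a_{u,v}(L^I_X)$, which is complete for the $(u,v)$-adic topology. Inside $A$ one can run the one-variable inversion formally in $t$, with coefficients in $A$.

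\textbf{Step 1: reorganize the recursion by Levi subgroups.} Group the HN types $\nu=(P^J,\mu)$ according to the standard parabolic $P^J$, where $J=\{\alpha\in\Delta:\alpha(\mu)>0\}$. For fixed $J$, the Atiyah--Bott points $\mu$ form the translate of a lattice cone described in \cite[Lemma 3.1]{Ho_Liu_YM_AMS}. Using Lemma \ref{towards_formula_for_codim}, I would rewrite the codimension $d_\nu$ as $(g-1)\dim U^J+2\varrho^J(\mu)$, since $\sum_{\alpha(\mu)>0}\alpha(\mu)=2\varrho^J(\mu)$. Then \eqref{recursive_formula_for_HP_series} takes the form of Theorem \ref{thm:recursion}:
\[
a_{u,v}(G_X)=\sum_{J\subseteq\Delta}\ \sum_{\mu}(uv)^{(g-1)\dim U^J+2\varrho^J(\mu)}\,HP_{u,v}\big(\fBun^{\delta_\mu,ss}_{L^J_X}\big).
\]
This is the same shape as the Laumon--Rapoport identity, with $t^2$ replaced by $uv$ and with the coefficients in $A$.

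\textbf{Step 2: the abstract inversion (Theorem \ref{thm:inversionII}).} I would prove that if families $a(L^J)$ and $b(L^J,\delta)$ in $A$ satisfy this recursion for every Levi subgroup, then
\[
b(G,d)=\sum_{I\subseteq\Delta}(-1)^{|I|}a(L^I)\sum_{\mu\in C_I(d)}(uv)^{(g-1)\dim U^I+2\varrho^I(\mu)},
\]
where $C_I(d)$ is the appropriate shifted lattice cone. The proof substitutes the recursion for each $L^J$ into itself and uses the combinatorial identity $\sum_{J\subseteq K\subseteq I}(-1)^{|K\setminus J|}=\delta_{IJ}$, together with the Langlands decomposition of $\fh_\bR$ into chambers (Arthur's and Langlands' combinatorial lemma, as in \cite{LR}). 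Convergence of all rearrangements in $A$ holds because only finitely many $\nu$ satisfy $d_\nu\le N$ \cite[(1.18)]{AB}, and every term has positive total $(u,v)$-degree in $uv$. Note that $|I|=\dim Z(L^I)-\dim Z_G$.

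\textbf{Step 3: summing the geometric series.} The cone $C_I(d)$ consists of the points $\sum_{\alpha\in I}x_\alpha\alpha^\vee$-type coordinates with $\varpi_\alpha(\mu)\in\varpi_\alpha(d)+\bZ$ and $\varpi_\alpha(\mu)>0$, so $\varpi_\alpha(\mu)\in\langle\varpi_\alpha(d)\rangle+\bZ_{\ge0}$. Since $\varrho^I$ restricted to $\fz_{L^I}$ is $\sum_{\alpha\in I}\varrho^I(\alpha^\vee)\varpi_\alpha$, the sum factors as a product of geometric series
\[
\prod_{\alpha\in I}\frac{(uv)^{2\varrho^I(\alpha^\vee)\langle\varpi_\alpha(d)\rangle}}{1-(uv)^{2\varrho^I(\alpha^\vee)}}.
\]
This gives Theorem \ref{thm:HP} as an identity in $\Q(u,v)$ expanded in $\bZ[\![u,v]\!]$.

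\textbf{Main obstacle.} I expect the hardest part to be Step 2. Specifically, one must check that the sign and cone bookkeeping is exact when $[G,G]$ is not simply connected. This requires showing that the map $\mu\mapsto\delta_\mu$ of Diagram \eqref{diagram_with_fund_weights} is compatible with the nested recursions for $L^J\subset L^I$, so that the residues $\varpi_\alpha(d)$ come out correctly. Here I would follow \cite[Theorem A.6]{Ho_Liu_YM_AMS} line by line and verify at each step that only $uv$-monomials are introduced. Verifying that the two-variable completeness suffices for the rearrangements is routine once this is in place.
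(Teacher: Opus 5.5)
Your proposal is correct and follows the paper's route. You rewrite the Atiyah--Bott recursion in the Laumon--Rapoport form of Theorem \ref{thm:recursion}, with $d_\nu=(g-1)\dim U^J+2\varrho^J(\mu)$. You then invert it with the Ho--Liu theorem indexed by topological types $\nu_P\in\pi_1 M_P$, which is what handles non-simply-connected $[G,G]$, and sum the $\widehat\tau$-cone as a product of geometric series as in \cite[Lemma A.5]{Ho_Liu_YM_AMS}. Letting $t$ act as $uv$ with complex codimensions is a slightly cleaner way to obtain Theorem \ref{thm:inversionII} directly from \cite[Theorem A.6]{Ho_Liu_YM_AMS} than the paper's half-exponents $(uv)^{\frac12 m}$, but the argument is otherwise the same.
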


\subsection{Proof of Theorem \ref{thm:HP}}\label{proof_of_inversion}

\subsubsection{Notation}
The following is  a correspondence between the notation in \cite{FM} and that in \cite{LR}.
\\

\begin{center}
\begin{longtable}{||c|c|c||}\hline
                           & \cite{LR}    & \cite{FM} \\ \hline
\begin{tabular}{c}
minimal parabolic \\
subgroup (Borel)
\end{tabular}               & $P_0$        &         \\ \hline
Cartan of $G$               & $M_0$        & $H$       \\ \hline
parabolic subgroup          & $P=M_P N_P$  & $P=LU$    \\ \hline
Levi subgroup               & $M_P$        & $L$       \\ \hline
unipotent radical           & $N_P$        & $U$       \\ \hline
\begin{tabular}{c}
center of the \\
Levi subgroup
\end{tabular}               & $Z_P$        & $Z(L)$    \\ \hline
\begin{tabular}{c}
connected center \\
of $M_P$
\end{tabular}               & $A_P$        & $Z(L)_0$  \\ \hline
 \begin{tabular}{c} \\ \end{tabular}
                            & $A_P'\subset M_{P,\mathrm{ab}}$ & $L/[L,L]=Z(L)_0/Z(L)_0\cap[L,L]$ \\ \hline
                            & $X_*(A_P)$   & $\pi_1(Z(L)_0)$ \\ \hline
                   & $X_*(A_P')$  & $\pi_1(H)/\hat{\Lambda}_L =\pi_1(L/[L,L])$ \\ \hline
                   & $X_*(A_{P_0}')$ & $\pi_1(H)$\\ \hline
                   & $\fa_0=\fa_{P_0}$ & $\fh_\bR$\\ \hline
                   & \begin{tabular}{c}
                   $\fa_P =\bR \otimes X_*(A_P)$\\
                   $\quad =\bR\otimes X_*(A_P')$ \end{tabular}  & $(\fz_L)_\bR$ \\ \hline
                   & \begin{tabular}{c}
                   $\fa_G =\bR \otimes X_*(A_G)$\\
                   $\quad =\bR\otimes X_*(A_G')$ \end{tabular} & $(\fz_G)_\bR \cong \fh_\bR/V^*$ \\ \hline
& $\fa^G_0 =\fa^G_{P_0}\subset \fa_0$  &  $V^* =\Lambda\otimes \bR  \subset \fh_\bR$ \\ \hline
root system           & $\Phi_0=\Phi_{P_0} \subset \fa_0^\vee$   & $R\subset \fh_\bR^*$ \\ \hline
set of positive roots & $\Phi_0^+=\Phi_{P_0}^+\subset \Phi_0$    & $R_+\subset R$ \\ \hline
set of simple roots   & $\Delta_0 =\Delta_{P_0}\subset \Phi_0^+$ & $\Delta\subset R_+$ \\ \hline
coroot lattice of $G$ & $\bigoplus_{\alpha\in \Delta_0}\bZ \alpha^\vee$ &
$\Lambda = \bigoplus_{\alpha\in \Delta}\bZ \alpha^\vee \subset \pi_1(H)$ \\ \hline
\end{longtable}\end{center}

In this section, we will closely follow the notation in \cite{LR}. We will not repeat
most of the definitions in \cite{LR}. Following \cite{LR}, if $P\subset Q\subset R$
are three parabolic subgroups of $G$, there are canonical splittings
$\fa_P=\fa^Q_P\oplus\fa^R_Q\oplus\fa_R$ and
$\fa_P^* =\fa_P^{Q*}\oplus \fa_Q^{R*} \oplus \fa_R^*$.
Given $H\in\fa_P$, we denote by $[H]^Q$, $[H]_Q^R$, and $[H]_R$ the canonical projections
of $H$ onto $\fa^Q_P$, $\fa^R_Q$, and $\fa_R$, respectively. The components of
$\beta\in \fa_P^*$ in $\fa_P^{Q*}$, $\fa_Q^{R*}$, and $\fa_R^*$ are $\beta|_{\fa^Q_P}$,
$\beta|_{\fa^R_Q}$, and $\beta|_{\fa_R}$, respectively.
Given $\alpha\in \Delta_P=\Delta_P^G\subset \fa_P^{G*}$, let $\tal$ denote
the unique element in $\Delta_0\subset \fa_{P_0}^{G*}$ such that $\tal|_{\fa_P^G}=\alpha$.
Then $\tal^\vee\in \fa_{P_0}^G$ and
$\alpha^\vee = [\tal^\vee]_P \in \fa_P^G$.
The subset $I^P$ of the set of simple roots in \cite{FM} corresponds to $\Delta_P=\Delta_P^G$
in the following way:
\begin{eqnarray*}
I^P =  \{ \tilde{\alpha}\mid \alpha\in \Delta_P \} \subset \Delta_0 \subset \fa_{P_0}^{G*}, & &  \Delta_P = \{\beta|_{\fa_P}\mid \beta\in I^P\}\subset \fa_P^{G*}\\
\Delta_{P_0}^P = \{\beta|_{\fa_{P_0}^P}\mid \beta \in \Delta_0\setminus I^P\} \subset \fa_{P_0}^{P*}, & &  
\Delta_{P}^{Q}  = \{\beta|_{\fa_{P}^Q}\mid \beta \in I^P\setminus I^Q\}\subset \fa_{P}^{Q*}
\end{eqnarray*}

We continue the table of correspondence between notations in \cite{LR} and \cite{FM}:
\\
\begin{center}
\begin{tabular}{||c|c|c||}\hline
                     & \cite{LR} & \cite{FM} \\ \hline
                     & $X_*(A_P')$ & $\pi_1(H)/\hat{\Lambda}_L$ \\ \hline
                     & $\Lambda^G_P =X_*(A_P')/\bigoplus_{\alpha \in \Delta^G_P}\bZ \alpha^\vee$
                     & $\pi_1(H) / (\hat{\Lambda}_L\oplus  \bigoplus_{\alpha\in I^P} \bZ\alpha^\vee)$ \\ \hline
\begin{tabular}{c}
Topological type \\
 of $G$-bundle
\end{tabular}
                     & $\Lambda^G_{P_0}$  & $\pi_1(H)/\Lambda =\pi_1(G)$\\ \hline
\begin{tabular}{c}
Topological type \\
 of $M_P$-bundle
\end{tabular}
                     & $\Lambda^P_{P_0}$ & $\pi_1(H)/\Lambda_L =\pi_1(L)$ \\ \hline
\end{tabular}\end{center}

Recall that we have the commutative diagram \eqref{diagram_with_fund_weights}, 
in which the $\varpi_\alpha$ are the fundamental weights and whose columns and first row are exact. Given a parabolic subgroup $P$ of $G$, the topological
type of an $M_P$-bundle is given by $\nu_P\in \Lambda^P_{P_0}\cong \pi_1(M_P)$.
The slope of an $M_P$-bundle is given by $\nu_P'\in X_*(A_P')$.
So the commutative diagram \eqref{diagram_with_fund_weights} can be rewritten as follows:
\begin{equation}\label{diagram_with_fund_weights3}
\begin{CD}
@. 0     @.  0\\
@. @VVV @VVV\\
0 @>>> \hLambda_P/\Lambda_P @>{j_{ss}}>> \hLambda/\Lambda
@>{ \oplus_{\alpha\in \Delta^G_P}\varpi_\tal }>> \oplus_{\alpha\in \Delta^G_P}\bQ/\bZ \\
@.  @VV{i_P}V @VV{i_G}V @|\\
@.  \Lambda^P_{P_0} @>{[\cdot]_G}>> \Lambda^G_{P_0}
@>{\oplus_{\alpha\in \Delta^G_P}\varpi_\tal}>> \oplus_{\alpha\in \Delta^G_P}\bQ/\bZ \\
@. @VV{p_P}V @VV{p_G}V\\
@. X_*(A_P') @>{[\cdot]_G}>>  X_*(A_G') \\
@. @VVV @VVV\\
@. 0 @. 0 \\
\end{CD}
\end{equation}
Here $\Lambda_P =\oplus_{\alpha\in \Delta_{P_0}^P} \bZ\alpha^\vee\subset
 X_*(A_{P_0}')$, and $\hLambda_P$  is the saturation of $\Lambda_P$ in $X_*(A'_{P_0})$.
Let $\nu_P'$ and $\nu_G'$ denote the projections $p_P(\nu_P)$ and $p_G(\nu_G)$, respectively. Recall that $\{\varpi_\alpha \mid \alpha\in \Delta_0\}$ is a basis of
the real vector space $\fa_{P_0}^{G*}$ which is dual to the basis
$\{ \alpha^\vee \mid \alpha\in\Delta_0\}$ of $\fa_{P_0}^G$. Given $\alpha\in \Delta_0$,
we extend $\varpi_\alpha:\fa_{P_0}^G\lra \bR$ to $\varpi_\alpha:\fa_0=\fa_{P_0}^G \oplus \fa_G \lra \bR$ by
zero on $\fa_G$. Then $\varpi_\alpha$ takes integral values on
$\oplus_{\alpha\in \Delta_0} \bZ \alpha^\vee\subset \fa_{P_0}^G \subset \fa_0$,
and takes rational values on $X_*(A_{P_0}') \subset \fa_0$. So it induces a map
$$
\varpi_\alpha: \Lambda_{P_0}^Q=X_*(A_{P_0}')\left/\bigoplus_{\alpha\in \Delta_{P_0}^Q}\bZ\alpha^\vee\right.
\lra \bQ/\bZ
$$
where $Q$ is any parabolic subgroup of $G$. More explicitly,
given $\nu_Q \in \Lambda_{P_0}^Q$, let $X\in X_*(A'_{P_0})$ be a representative
of $\nu_Q$. Then $\varpi_\alpha(\nu_Q)= \varpi_\alpha(X)+\bZ$.

\subsubsection{Inversion formulas}
Let $A$ be a fixed topological abelian group.
In \cite{LR}, Laumon and Rapoport introduced the notion of $\widehat\Gamma$-converging
functions and $\Gamma$-converging functions from $\fP$ to $A$, where
$$
\fP=\{ (P,\nu_P')\mid P\in \cP, \nu'_P \in X_*(A_P') \}.
$$
We will introduce similar notion for
functions from $\fT$ to $A$, where
$$
\fT=\{ (P,\nu_P)\mid P\in \cP, \nu_P \in \Lambda^P_{P_0} \}.
$$

\begin{definition} Let
$\fT=\{ (P,\nu_P)\mid P\in \cP, \nu_P \in \Lambda^P_{P_0} \}$,
and let $A$ be a fixed topological abelian group.
A function $a: \fT\lra A$ is
{\em $\widehat\Gamma$--converging} if for each standard parabolic subgroup
$P\subset Q$ of $G$ and each $\nu_Q\in \Lambda^Q_{P_0}$, the finite sum
$$
\sum_{\scriptstyle \nu_P\in \Lambda^P_{P_0}\atop\scriptstyle [\nu_P]_Q=\nu_Q}
\widehat\Gamma_P^Q([\nu_P']^Q,T)a(P,\nu_P)
$$
admits a limit as $T\in \fa_P^{Q+}$ goes to infinity. If this is the case,
we shall denote this limit by
$$
\sum_{\scriptstyle \nu_P\in \Lambda^P_{P_0}\atop\scriptstyle [\nu_P]_Q=\nu_Q}
\widehat\tau_P^Q([\nu_P']^Q)a(P,\nu_P)\ .
$$
A function $b: \fT \lra A$ is {\em $\Gamma$--converging} if for each
standard parabolic subgroup $P\subset Q$ of $G$ and
each $\nu_Q\in \Gamma^Q_{P_0}$, the finite sum
$$
\sum_{\scriptstyle \nu_P\in \Lambda^P_{P_0}\atop\scriptstyle [\nu_P]_Q=\nu_Q}
\Gamma_P^Q([\nu_P']^Q,T)b(P,\nu_P)
$$
admits a limit as $T\in \fa_P^{Q+}$ goes to infinity.
If this is the case, we shall denote this limit by
$$
\sum_{\scriptstyle \nu_P\in \Lambda^P_{P_0}\atop\scriptstyle [\nu_P]_Q=\nu_Q}
\tau_P^Q([\nu_P']^Q)b(P,\nu_P)\ .
$$
\end{definition}

\begin{theorem}[{\cite[Theorem A.1]{Ho_Liu_YM_AMS}}]\label{thm:inversionI}
For each $\widehat\Gamma$--converging function 
$a:\fT\rightarrow A$, there exists a
unique $\Gamma$--converging function $b:\fT\rightarrow A$ such that,
for each $(Q,\nu_Q)\in \fT$, we have
$$
a(Q,\nu_Q)=\sum_{\scriptstyle P\in \cP\atop\scriptstyle P\subset Q}
\sum_{\scriptstyle \nu_P\in \Lambda^P_{P_0}\atop\scriptstyle [\nu_P]_Q=\nu_Q}
\tau_P^Q([\nu_P']^Q)b(P,\nu_P)\ .
$$

The function $b$ is given by the following formula~:
for each $(Q,\nu_Q)\in \fT$, we have
$$
b(Q,\nu_Q)=
\sum_{\scriptstyle P\in \cP\atop
\scriptstyle P\subset Q}(-1)^{{\rm dim}(\fa_P^Q)}
\sum_{\scriptstyle \nu_P\in \Lambda^P_{P_0}\atop\scriptstyle [\nu_P]_Q=\nu_Q}
\widehat\tau_P^Q([\nu_P']^Q)a(P,\nu_P)\ .
$$
\end{theorem}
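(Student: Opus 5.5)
The plan is to follow the proof of the Laumon--Rapoport inversion theorem \cite[Theorem 2.4]{LR}, adapted to topological types $\nu_P\in\Lambda^P_{P_0}$ instead of slopes $\nu_P'\in X_*(A_P')$, as in \cite[Appendix A]{Ho_Liu_YM_AMS}. The basic tool is the pair of combinatorial identities of Langlands--Arthur type relating the characteristic functions $\tau_P^Q$ (of the open positive chamber) and $\widehat\tau_P^Q$ (of the open obtuse cone):
\[
\sum_{P\subset R\subset Q}(-1)^{\dim \fa_R^Q}\,\tau_P^R\,\widehat\tau_R^Q=\delta_{PQ},\qquad
\sum_{P\subset R\subset Q}(-1)^{\dim \fa_P^R}\,\widehat\tau_P^R\,\tau_R^Q=\delta_{PQ},
\]
together with their truncated versions involving $\Gamma_P^Q(\cdot,T)$ and $\widehat\Gamma_P^Q(\cdot,T)$. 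These finite-$T$ versions are what make sense before taking limits.

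\textbf{Step 1 (existence).} Define $b$ by the stated formula. First check that $b$ is $\Gamma$-converging. For finite $T$, substitute the formula into $\sum_{\nu_P}\Gamma_P^Q([\nu_P']^Q,T)\,b(P,\nu_P)$ and interchange the order of summation. Each inner sum over $\nu_P$ with $[\nu_P]_R=\nu_R$ is finite, because $\Lambda^P_{P_0}$ surjects onto $\Lambda^R_{P_0}$ with fibres that are torsors under a lattice, and the truncation $\Gamma$ cuts this lattice down to a bounded region. One then uses the compatibility $[\,[\nu_P]_R\,]_Q=[\nu_P]_Q$ together with the splitting $\fa_P^Q=\fa_P^R\oplus\fa_R^Q$, and applies the Arthur identity $\Gamma_P^Q(H,T)=\sum_R(\pm)\,\widehat\tau_P^R(H)\,\Gamma_R^Q(\dots)$-type decomposition. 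After this, the $\widehat\Gamma$-convergence of $a$ gives the limit. Letting $T\to\infty$ and applying the first alternating identity then collapses the double sum to $a(Q,\nu_Q)$, which proves the inversion relation.

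\textbf{Step 2 (uniqueness).} Suppose $b$ is $\Gamma$-converging and $\sum_{P\subset Q}\sum\tau_P^Q\, b=0$ for every $Q$. Apply the second identity, i.e.\ convolve with $(-1)^{\dim\fa}\widehat\tau$. This recovers $b(Q,\nu_Q)$ as an alternating combination of these vanishing sums, so $b=0$. Alternatively, argue by induction on the semisimple rank of $Q$, isolating the term $P=Q$ (where $\tau_Q^Q=1$).

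\textbf{Main obstacle.} The hard part is the justification of interchanging the limits $T\to\infty$ with the sums over intermediate parabolics $R$ and over the lattice fibres $\nu_P\mapsto\nu_R$. In \cite{LR} this is handled on $\fP$, where the fibres of $[\cdot]_Q$ are lattices in $\fa_P^Q$. Here the fibres of $\Lambda^P_{P_0}\to\Lambda^Q_{P_0}$ also contain the finite torsion $\hLambda_P/\Lambda_P$ from Diagram \eqref{diagram_with_fund_weights3}. My approach would be to reduce to the Laumon--Rapoport case by pushing forward along the finite-to-one map $p_P:\Lambda^P_{P_0}\to X_*(A_P')$, summing $a$ and $b$ over torsion fibres. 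Convergence properties and the inversion identity are preserved by this pushforward, since the characteristic functions depend only on $\nu_P'$. What then needs care is recovering $b$ itself, not just its pushforward. This can be done by fixing the torsion class, i.e.\ restricting to the coset determined by $\nu_Q$ and the class of $\nu_P$ in $\hLambda/\Lambda$. On each such coset the map to slopes is injective, so \cite[Theorem 2.4]{LR} applies coset by coset.
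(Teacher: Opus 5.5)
Your strategy, adapting \cite{LR} to topological types, is the right one. The paper gives no proof of its own; it simply quotes \cite[Theorem A.1]{Ho_Liu_YM_AMS}, which is the argument of \cite{LR} transposed from $\fP$ to $\fT$. As written, however, the existence half is not established. Step 1 defers exactly the convergence question, and the reduction you offer to settle it rests on a false premise and cites the wrong theorem.

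\textbf{The obstacle you name does not exist.} Fix $P\subset Q$.
The fibre $\{\nu_P\in\Lambda^P_{P_0} : [\nu_P]_Q=\nu_Q\}$ is a torsor under $\Lambda_Q/\Lambda_P\cong\bigoplus_{\alpha\in\Delta_P^Q}\bZ\tal^\vee$. This group is free, as you yourself say in Step 1.
Moreover, $\nu_P\mapsto[\nu_P']^Q$ is injective on the fibre, because $[\tal^\vee]_P=\alpha^\vee$ and $\{\alpha^\vee\}_{\alpha\in\Delta_P^Q}$ is a basis of $\fa_P^Q$.
Equivalently, $\hLambda_P\cap\Lambda_Q=\Lambda_P$, since simple coroots are linearly independent. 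So the torsion $\hLambda_P/\Lambda_P$ injects into $\Lambda^Q_{P_0}$ (for $Q=G$ this is the injectivity of $j_{ss}$ in \eqref{diagram_with_fund_weights3}) and never lies inside a fibre.
Consequently, every sum in the definitions of $\widehat\Gamma$- and $\Gamma$-convergence runs over a set mapped injectively onto a lattice coset of $\fa_P^Q$, exactly as on $\fP$.
The Laumon--Rapoport truncation argument uses only this, together with transitivity of the fibres, $\{\nu_R:[\nu_R]_Q=\nu_Q\}=\bigsqcup_{[\nu_P]_Q=\nu_Q}\{\nu_R:[\nu_R]_P=\nu_P\}$, and compatibility of the projections. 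It therefore runs verbatim on $\fT$.
The switch from slopes to topological types only shows up in the explicit evaluation of the $\widehat\tau$-sums (\cite[Lemma A.5]{Ho_Liu_YM_AMS} versus \cite[Lemma 2.3]{LR}). The pushforward along $p_P$ and the \emph{class of $\nu_P$ in $\hLambda/\Lambda$} are superfluous.

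\textbf{The coset-by-coset reduction works, but not as stated.} If you would rather quote \cite{LR} than rerun it, your idea succeeds for exactly the reason above, with three corrections.
First, \cite[Theorem 2.4]{LR} is the specialization to $a(P,\nu_P')=a_0(P)\,t^{m(P,\nu_P')}$ in a complete $\bZ[\![t]\!]$-module (the analogue of Theorem \ref{thm:inversionII}). It says nothing about an arbitrary $\widehat\Gamma$-converging $a$ with values in a topological group, so you need the general Laumon--Rapoport inversion theorem on $\fP$.
Second, that theorem concerns functions on all of $\fP$. So fix $(Q,\nu_Q)$, transport $a$ along the injection $\{(P,\nu_P): P\subset Q,\ [\nu_P]_Q=\nu_Q\}\hookrightarrow\fP$ for the Levi $M_Q$, and extend by zero.
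Third, you must check three things:
\begin{itemize}
\item the extension is $\widehat\Gamma$-converging: for $P\subset R\subset Q$, its nonzero terms over $\nu_R'$ are those of the $\fT$-fibre over the unique $\nu_R$ with $[\nu_R]_Q=\nu_Q$ and slope $\nu_R'$, if one exists;
\item the resulting solution vanishes off the image;
\item the values obtained from different $(Q,\nu_Q)$ agree, which holds because the formula for $b$ at a point involves only $a$ below it.
\end{itemize}

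\textbf{Step 1 does not prove existence on its own.}
\begin{itemize}
\item The identity you quote is not Arthur's. For his function $\Gamma_R^Q$ the correct one is $\widehat\tau_P^Q(H-T)=\sum_{P\subset R\subset Q}(-1)^{\dim\fa_R^Q}\widehat\tau_P^R(H)\,\Gamma_R^Q(H,T)$.
\item The $\Gamma$-convergence of $b$, whose values are themselves regularized sums, is a statement about iterated limits. No pointwise identity between characteristic functions gives it to you.
\item Your two identities are used the wrong way round. The formal check that the formula for $b$ satisfies the relation needs $\sum_R(-1)^{\dim\fa_P^R}\widehat\tau_P^R\tau_R^Q=\delta_{PQ}$; the other identity yields uniqueness.
\end{itemize}
Your inductive uniqueness argument, isolating the term $P=Q$, is fine.
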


Now we consider a special case of Theorem \ref{thm:inversionI}.
For any $P\in \cP$, fix $n_P\in \bZ_{\geq 0}$ and
$\ep^P_0 \in \fa_0^{P*}\subset \fa_0^*$ such that
for any standard parabolic subgroups $P\subset Q$ of $G$,
$$
n_P \geq n_Q,\quad
(\ep^Q_0 -\ep^P_0)\bigr|_{\fa_0^P}=0,
\quad \langle \ep^Q_P,\alpha^\vee\rangle\in \bZ_{>0}
\quad \forall \alpha\in \Delta_P^Q,
$$
where $\ep_P^Q=(\ep^Q_0-\ep^P_0)\bigr|_{\fa^Q_P}$ (here, we use $\ep_P^Q$ instead of the symbol $\delta_P^Q$  used in \cite{LR}).

Ho-Liu proved the following analogue of  \cite[Lemma 2.3]{LR}:
\begin{lemma}[{\cite[Lemma A.5]{Ho_Liu_YM_AMS}}] 
For each $(Q,\nu_Q)\in \fT$ and each standard parabolic subgroup
$P\subset Q$
of $G$, we have
$$\displaylines{
\qquad\sum_{\scriptstyle \nu_P\in \Lambda^P_{P_0}\atop
\scriptstyle [\nu_P]_Q=\nu_Q}
\widehat\tau_P^Q([\nu_P']^Q)t^{\langle\ep_P^Q,[\nu_P']^Q\rangle}
=\Bigl(\prod_{\alpha\in\Delta_P^Q}
{1\over 1-t^{\langle\ep_P^Q,\alpha^\vee\rangle}}\Bigl)
t^{\sum_{\alpha\in\Delta_P^Q}\langle\ep_P^Q,\alpha^\vee\rangle
\langle\varpi_\tal (\nu_Q)\rangle}\,,\qquad}
$$
where, for each $\mu \in \bR/\bZ$, $\langle \mu \rangle\in \bR$
is the unique representative of the class $\mu$ such that
$0<\langle \mu \rangle\leq 1$.
\end{lemma}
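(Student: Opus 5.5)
The plan is to reduce the left-hand side to a product of one-variable geometric series by choosing explicit coordinates on $\fa_P^Q$, following the proof of \cite[Lemma~2.3]{LR}. Recall that $\widehat\tau_P^Q$ is the characteristic function of the open cone
$$\{H\in\fa_P^Q \mid \varpi_\tal(H)>0 \text{ for all } \alpha\in\Delta_P^Q\}.$$
The truncation $\widehat\Gamma_P^Q(\cdot,T)$ cuts this cone down to a bounded region. So the limit $T\to\infty$ is just the $t$-adic limit of partial sums of the series below. That limit exists because every exponent $\langle\ep_P^Q,\alpha^\vee\rangle$ is a positive integer.

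The first step is to parametrize the fiber $\{\nu_P\in\Lambda^P_{P_0}\mid[\nu_P]_Q=\nu_Q\}$. We have $\Lambda^P_{P_0}=X_*(A'_{P_0})/\Lambda_P$ and $\Lambda^Q_{P_0}=X_*(A'_{P_0})/\Lambda_Q$, where
$$\Lambda_Q=\Lambda_P\oplus\bigoplus_{\alpha\in\Delta_P^Q}\bZ\,\tal^\vee$$
because simple coroots form a $\bZ$-basis of the coroot lattice. Fix a representative $X\in X_*(A'_{P_0})$ of $\nu_Q$. Then the fiber is in bijection with $\bZ^{\Delta_P^Q}$ via
$$n\mapsto \Big(X+\sum_\alpha n_\alpha\tal^\vee\Big)\bmod\Lambda_P.$$
Next, project to $\fa_P^Q$, passing through $\nu'_P$. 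Since $[\tal^\vee]^Q$ projects to $\alpha^\vee$, this gives
$$[\nu'_P]^Q=[X]^Q_P+\sum_\alpha n_\alpha\alpha^\vee.$$
The map $n\mapsto[\nu'_P]^Q$ is injective because $\{\alpha^\vee\}_{\alpha\in\Delta_P^Q}$ is a basis of $\fa_P^Q$. Hence the sum runs freely over $n\in\bZ^{\Delta_P^Q}$.

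The second step is to identify coordinates. The linear form $\varpi_\tal$ vanishes on $\fa_G$ and on $\tilde\beta^\vee$ for every $\tilde\beta\ne\tal$, in particular on all coroots of $\Delta^P_{P_0}$. Hence on $\fa_P^Q$ the forms $\{\varpi_\tal\}$ are the dual basis of $\{\alpha^\vee\}$, and $\varpi_\tal(H)=\varpi_\tal(\tilde H)$ for any lift $\tilde H$. Write
$$[\nu'_P]^Q=\sum_\alpha c_\alpha\alpha^\vee .$$
Then $c_\alpha=\varpi_\tal(X)+n_\alpha$, which lies in the class $\varpi_\tal(\nu_Q)\in\bQ/\bZ$. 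The cone condition $c_\alpha>0$ therefore says exactly that
$$c_\alpha\in\langle\varpi_\tal(\nu_Q)\rangle+\bZ_{\ge0},$$
and as $n_\alpha$ varies, $c_\alpha$ runs bijectively over this set.

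Finally, $\langle\ep_P^Q,[\nu'_P]^Q\rangle=\sum_\alpha c_\alpha\langle\ep_P^Q,\alpha^\vee\rangle$, so the sum factors over $\alpha\in\Delta_P^Q$. Each factor is
$$t^{\langle\ep^Q_P,\alpha^\vee\rangle\langle\varpi_\tal(\nu_Q)\rangle}\sum_{k\ge0}t^{k\langle\ep_P^Q,\alpha^\vee\rangle}=\frac{t^{\langle\ep^Q_P,\alpha^\vee\rangle\langle\varpi_\tal(\nu_Q)\rangle}}{1-t^{\langle\ep_P^Q,\alpha^\vee\rangle}},$$
and the product of these factors is the claimed formula.

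The main point needing care is the second step. One must check that the restrictions of the $\varpi_\tal$ to $\fa_P^Q$ really form the dual basis to the $\alpha^\vee=[\tal^\vee]_P$. One must also check that $\varpi_\tal(\nu_Q)$ is well defined modulo $\bZ$, independent of the representative $X$, which holds since $\varpi_\tal$ is integral on $\Lambda_Q$. A further check is that the exponents $c_\alpha\langle\ep,\alpha^\vee\rangle$, although possibly rational, differ within each factor only by integer multiples of $\langle\ep,\alpha^\vee\rangle$, so the series manipulations are legitimate in the completed group $A$.
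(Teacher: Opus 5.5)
Your route is the Laumon--Rapoport computation that underlies the cited \cite[Lemma A.5]{Ho_Liu_YM_AMS}: you parametrize the fibre of $\Lambda^P_{P_0}\to\Lambda^Q_{P_0}$ by $\bZ^{\Delta_P^Q}$, pass to coordinates dual to $\{\alpha^\vee\}$, and factor into geometric series. The paper itself gives no proof beyond that citation. Your first step and your final factorization are fine.

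The gap is the identity $c_\alpha=\varpi_\tal(X)+n_\alpha$ in your second step. Your representative $X$ lifts $\nu_P'=[X]_P$ (taking $n=0$), not $[\nu_P']^Q$. In $\fa_P=\fa_P^Q\oplus\fa_Q^G\oplus\fa_G$ one has $[X]_P=[X]^Q_P+[X]^G_Q+[X]_G$. You checked that $\varpi_\tal$ kills $\fa^P_{P_0}$ and $\fa_G$, and this does make its restriction to $\fa_P^Q$ the dual basis of $\{\alpha^\vee\}$. But the fundamental weight $\varpi_\tal$ of $G$ does not kill $\fa_Q^G$ in general. That space is spanned by the fundamental coweights attached to $I^Q$, on which $\varpi_\tal$ is usually nonzero. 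What you actually get is $c_\alpha=\varpi_\tal(X)-\varpi_\tal([X]^G_Q)+n_\alpha$. So when $Q\neq G$, the class of $c_\alpha$ modulo $\bZ$ need not be $\varpi_\tal(\nu_Q)$.

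This cannot be repaired within the statement as written, because with the paper's $G$-fundamental weights the formula itself fails for $Q\neq G$. Take $G=\GL_3$, $P=B$, $Q$ the parabolic with Levi $\GL_2\times\GL_1$, $\nu_Q=(d_1,d_2)$ and $X=d_1e_1+d_2e_3$.
\begin{itemize}
\item The fibre is $X+n(e_1-e_2)$, and $[\nu_P']^Q=(\tfrac{d_1}{2}+n)\alpha_1^\vee$.
\item On the other hand $\varpi_{\alpha_1}(X)=\tfrac{2d_1-d_2}{3}$. The discrepancy is exactly $\varpi_{\alpha_1}([X]^G_Q)=\tfrac{d_1-2d_2}{6}$.
\item For $(d_1,d_2)=(0,1)$ and $k=\langle\ep_P^Q,\alpha_1^\vee\rangle$, the left side equals $t^{k}/(1-t^{k})$. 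The displayed right side is $t^{2k/3}/(1-t^{k})$, since $\langle -1/3\rangle=2/3$.
\end{itemize}

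Your argument does prove, essentially verbatim, the formula with $\varpi_\tal$ replaced by the fundamental weight $\varpi^Q_\tal$ of $M_Q$. This is the basis of $\fa_{P_0}^{Q*}$ dual to $\{\tilde\beta^\vee:\tilde\beta\in\Delta_0^Q\}$, extended by $0$ on $\fa_Q$. It has the properties your proof needs:
\begin{itemize}
\item it agrees with $\varpi_\tal$ on $\fa_P^Q$, so the cone is unchanged;
\item it is integral on $\Lambda_Q$;
\item it kills all of $\fa_Q$, so one really has $c_\alpha=\varpi^Q_\tal(X)+n_\alpha$.
\end{itemize}
For $Q=G$ the two weights coincide and your proof is complete. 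This is the only case needed to deduce Theorem \ref{thm:inversionIII}. You should either restrict the statement to $Q=G$, or state and prove the $\varpi^Q_\tal$ version.
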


\begin{corollary}
For each $(Q,\nu_Q)\in \fT$ and each standard parabolic subgroup
$P\subset Q$
of $G$, we have
$$\displaylines{
\qquad\sum_{\scriptstyle \nu_P\in \Lambda^P_{P_0}\atop
\scriptstyle [\nu_P]_Q=\nu_Q}
\widehat\tau_P^Q([\nu_P']^Q)(uv)^{\frac{1}{2} \langle\ep_P^Q,[\nu_P']^Q\rangle}
=\Bigl(\prod_{\alpha\in\Delta_P^Q}
{1\over 1-(uv)^{\frac{1}{2}\langle\ep_P^Q,\alpha^\vee\rangle}}\Bigl)
(uv)^{\frac{1}{2}\sum_{\alpha\in\Delta_P^Q}\langle\ep_P^Q,\alpha^\vee\rangle
\langle\varpi_\tal (\nu_Q)\rangle}\,,\qquad}
$$
\end{corollary}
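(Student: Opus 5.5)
The corollary follows from Lemma~A.5 of \cite{Ho_Liu_YM_AMS}, recalled just above, by a change of variables. The plan is to first apply the lemma with the datum $\ep$ replaced by a rescaled datum, and then pass from one variable to two.

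First I will check what the lemma really uses about the exponents. The hypotheses on $(n_P,\ep^P_0)$ are only used so that $t^{\langle\ep_P^Q,[\nu_P']^Q\rangle}$ makes sense, and so that the sum over $\nu_P$ with $[\nu_P]_Q=\nu_Q$ is a geometric sum over a shifted lattice in $\fa_P^Q$. Its generators are the $\alpha^\vee$, $\alpha\in\Delta_P^Q$, and the shift is determined by the classes $\varpi_\tal(\nu_Q)\in\bQ/\bZ$.

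Let $s$ be a formal symbol with $s^2=uv$, and apply the lemma in the variable $t=s$. This gives an identity of convergent series in $s$ with rational exponents. In that identity, the left side involves $s^{\langle\ep_P^Q,[\nu_P']^Q\rangle}$ and the right side the factors $(1-s^{\langle\ep_P^Q,\alpha^\vee\rangle})^{-1}$.

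Next I will rewrite $s^{x}=(uv)^{x/2}$ throughout. To stay inside $\bZ[\![u,v]\!]$ (or a suitable completed module with fractional exponents), I will either note that in the application $\ep=2\varrho$, so $\frac12\langle\ep,\cdot\rangle$ lands in the right exponent set, or work in $\bZ[\![(uv)^{1/N}]\!]$ for a common denominator $N$. The map $t\mapsto (uv)^{1/2}$ is a continuous ring homomorphism from $\bZ[\![t^{1/N}]\!]$ to $\bZ[\![(uv)^{1/2N}]\!]$ for the adic topologies. It therefore commutes with the limit as $T\to\infty$ defining $\widehat\tau_P^Q$, which is the only point to justify.

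Concretely, the finite truncated sums $\sum\widehat\Gamma_P^Q([\nu_P']^Q,T)\,t^{\langle\ep_P^Q,[\nu_P']^Q\rangle}$ map to the corresponding truncated sums in $uv$. Their limit is the image of the limit, by continuity. The right-hand side maps factor by factor to the displayed product and power.

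The main obstacle is this convergence and continuity bookkeeping. One must check that the limit in $T$ is taken in the $(t)$-adic topology, and that substitution preserves it. This holds because exponents tend to $+\infty$ as $[\nu_P']^Q$ leaves compact sets in the cone, by positivity of $\langle\ep_P^Q,\alpha^\vee\rangle$.

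Alternatively, to avoid substitution, I can redo the lemma's geometric-series computation directly in $uv$. For each $\alpha\in\Delta_P^Q$, the sum over $n\geq 0$ of $(uv)^{\frac12\langle\ep,\alpha^\vee\rangle(n+\langle\varpi_\tal(\nu_Q)\rangle)}$ equals the stated factor. The product over $\alpha$ then gives the formula.
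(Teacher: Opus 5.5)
Your proposal is correct and matches the paper, which states this corollary without proof as the substitution $t=(uv)^{1/2}$ in \cite[Lemma A.5]{Ho_Liu_YM_AMS}; your continuity remark is exactly what justifies passing that substitution through the limit defining $\widehat\tau_P^Q$. One small slip in an aside: in the application the paper takes $\ep_P^G=4\varrho_P^G$ (not $2\varrho$), so the halved exponents $\frac12\langle\ep_P^G,\alpha^\vee\rangle=2\langle\varrho_P^G,\alpha^\vee\rangle$ are integers, while for a general $\ep$ your fallback of allowing fractional powers of $uv$ is the appropriate setting.
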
 

Let $A$ be a $\bZ[\![u,v]\!]$-module equipped with the $\bZ[\![u,v]\!]$-adic topology and we assume
that $A$ is a complete with this topology.
Setting $m(P,\nu_P')= n_P + \langle \ep^G_P,\nu_P'\rangle$, we have now concluded with
the following refined version of the inversion formula \cite[Theorem A.6]{Ho_Liu_YM_AMS}
(which is a slightly modified version of \cite[Theorem 2.4]{LR}) for a $\bZ[\![t ]\!]$-module
which is complete with the $\bZ[\![t]\!]$-adic topology. 
\begin{theorem}\label{thm:inversionII}
Given $a_0:\cP\lra A$, there exists a unique function $b_0:\fT\rightarrow A$
which satisfies the relation
$$
a_0(Q)=\sum_{\scriptstyle P\in \cP \atop\scriptstyle P\subset Q}
\sum_{\scriptstyle \nu_P\in \Lambda^P_{P_0}\atop\scriptstyle [\nu_P]_Q=\nu_Q}
\tau_P^Q([\nu_P']^Q)b_0(P,\nu_P)(uv)^{\frac{1}{2}(m(P,\nu_P')-m(Q,\nu_Q'))}\,,
$$
for each $(Q,\nu_Q)\in\fT$. This function is given by
$$
b_0(Q,\nu_Q)=
\sum_{\scriptstyle P\in \cP \atop
\scriptstyle P\subset Q}
(-1)^{\dim(\fa_P^Q)}a_0(P)(uv)^{\frac{1}{2}(n_P-n_Q)}
\Bigl(\prod_{\alpha\in\Delta_P^Q}
{1\over 1-(uv)^{\frac{1}{2} \langle\ep_P^Q,\alpha^\vee\rangle}}\Bigl)\cdot
(uv)^{\frac{1}{2}\sum_{\alpha\in\Delta_P^Q}\langle\ep_P^Q,\alpha^\vee\rangle
\langle\varpi_\tal (\nu_Q)\rangle}
\in A\,,
$$ 
for each $(Q,\nu_Q)\in\fT$.
\end{theorem}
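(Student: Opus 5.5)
The plan is to deduce Theorem \ref{thm:inversionII} from the general inversion result, Theorem \ref{thm:inversionI}, by a rescaling of the unknown function. Applying Lemma A.5 of \cite{Ho_Liu_YM_AMS} in its two-variable form (the Corollary above) will then produce the closed expression.

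First, I rescale. Given $a_0:\cP\to A$, define $a:\fT\to A$ by
$$a(P,\nu_P):=a_0(P)\,(uv)^{\frac12 m(P,\nu_P')}.$$
For a candidate $b_0$, set $b(P,\nu_P):=b_0(P,\nu_P)(uv)^{\frac12 m(P,\nu_P')}$. Multiplying the desired relation by $(uv)^{\frac12 m(Q,\nu_Q')}$ turns it into exactly the relation $a(Q,\nu_Q)=\sum_{P\subset Q}\sum\tau_P^Q([\nu_P']^Q)\,b(P,\nu_P)$ of Theorem \ref{thm:inversionI}. One point needs care: the exponents $\frac12 m$ may be half-integers, or rational numbers. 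I would handle this by working in the completed module obtained by adjoining a suitable root of $uv$, or else by observing that only differences $m(P,\nu_P')-m(Q,\nu_Q')$ enter the final formula. Those differences lie in $\frac1N\bZ$, and the statement is to be read with these fractional powers.

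Second, I check that $a$ is $\widehat\Gamma$-converging for the $(u,v)$-adic topology. Fix $P\subset Q$ and $\nu_Q$. Using $[\nu_P']_Q=\nu_Q'$ and the decomposition $\ep_P^G=\ep_P^Q+\ep_Q^G$ with $(\ep^Q_0-\ep^P_0)|_{\fa_0^P}=0$, I get
$$m(P,\nu_P')=n_P+\langle\ep_P^Q,[\nu_P']^Q\rangle+\langle\ep_Q^G,\nu_Q'\rangle.$$
Hence, up to a factor depending only on $(Q,\nu_Q)$, the sum $\sum\widehat\Gamma_P^Q([\nu_P']^Q,T)\,a(P,\nu_P)$ equals $a_0(P)(uv)^{\frac12 n_P}$ times the truncated sum of $(uv)^{\frac12\langle\ep_P^Q,[\nu_P']^Q\rangle}$. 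On the support of $\widehat\tau_P^Q$, the pairing $\langle\varpi_\alpha,[\nu_P']^Q\rangle$ is positive for every $\alpha\in\Delta_P^Q$. Since each $\langle\ep^Q_P,\alpha^\vee\rangle$ is a positive integer, the exponent tends to infinity off finite sets. The truncated sums are therefore Cauchy in the $(uv)$-adic topology and converge, with limit given by the Corollary. The same estimate shows that the $b$ produced below is $\Gamma$-converging.

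Third, Theorem \ref{thm:inversionI} gives a unique $b$, namely $b(Q,\nu_Q)=\sum_{P\subset Q}(-1)^{\dim\fa_P^Q}\sum\widehat\tau_P^Q\,a(P,\nu_P)$. Substituting the factorization of $m$ and evaluating the inner sum by the Corollary yields
$$a_0(P)\,(uv)^{\frac12(n_P+\langle\ep_Q^G,\nu_Q'\rangle)}\prod_{\alpha\in\Delta_P^Q}\bigl(1-(uv)^{\frac12\langle\ep_P^Q,\alpha^\vee\rangle}\bigr)^{-1}(uv)^{\frac12\sum_\alpha\langle\ep_P^Q,\alpha^\vee\rangle\langle\varpi_{\tal}(\nu_Q)\rangle}.$$
Dividing by $(uv)^{\frac12 m(Q,\nu_Q')}=(uv)^{\frac12(n_Q+\langle\ep_Q^G,\nu_Q'\rangle)}$ recovers the stated formula for $b_0$. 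Uniqueness of $b_0$ follows from uniqueness of $b$, because the rescaling is invertible.

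I expect the main obstacle to be the convergence and uniqueness bookkeeping in the two-variable setting. The key issue is justifying that the one-variable topology used in \cite{Ho_Liu_YM_AMS} can be replaced by the $(u,v)$-adic one. Every quantity depends on $u,v$ only through $uv$, and powers of $uv$ tend to zero $(u,v)$-adically, so I would first try reducing to the $\bZ[\![t]\!]$ case with $t=(uv)^{1/2N}$. This uses the $\bZ[\![t]\!]$-module structure that the complete $\bZ[\![u,v]\!]$-module $A$ acquires after adjoining the root.
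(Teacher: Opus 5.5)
Your proposal is correct and follows the same route as the paper: the paper gives no separate argument and obtains the statement, as in Laumon--Rapoport's Theorem 2.4 and Ho--Liu's Theorem A.6, by applying Theorem \ref{thm:inversionI} to $a(P,\nu_P)=a_0(P)(uv)^{\frac12 m(P,\nu_P')}$ and evaluating the $\widehat\tau$-sums with the two-variable Corollary, which is exactly your rescaling, your splitting $\ep_P^G=\ep_P^Q+\ep_Q^G$, and your final division by $(uv)^{\frac12 m(Q,\nu_Q')}$. One caveat: $m(P,\nu_P')$ can be negative as well as fractional, because $\langle\ep_P^G,\nu_P'\rangle$ is unbounded below on $\fT$, so the rescaling also requires inverting $uv$ (not only adjoining roots); the paper likewise leaves this implicit, and it does no damage because the resulting formula for $b_0$ involves only nonnegative powers of $uv$.
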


\subsubsection{Recursion}

Let $\cC(G,\nu_G)$ be the space of complex structures
on a $C^\infty$ principal $G$-bundle over a Riemann
surface of genus $g\geq 2$ with topological type $\nu_G \in \Lambda^G_{P_0} \cong\pi_1(G)$.
Let $\cC^{ss}(G,\nu_G)\subset \cC(G,\nu_G)$ be the semi-stable stratum.
Let $P_t(G,\nu_G)$ and $P_t^{ss}(G,\nu_G)$ be the $\cG$-equivariant
Poincar\'{e} series of $\cC(G,\nu_G)$ and $\cC^{ss}(G,\nu_G)$, respectively.
Let $\cC(G,P,\nu_P)\subset \cC(G,\nu_G)$ be the stratum which corresponds to
$(P,\nu_P)\in \fT$, where $[\nu_P]_G =\nu_G$. Then the real codimension $m(P,\nu_P')$
of the stratum
$\cC(G,P,\nu_P)$ is equal to
$$
 2\dim(N_P)(g-1) + 4 \langle\varrho_P^G,\nu_P'\rangle \in 2\bZ,
$$
where $N_P$ is the unipotent radical of $P$ and
$$
\varrho_P^G=\frac{1}{2}\sum_{\alpha\in \Phi_P^{G+} } \alpha\in \fa^{G*}_P
\subset \fa_P^*.
$$
Clearly $m(G,\nu_G') =0$. With the above notation, we have the following refinement of the  Atiyah-Bott recursion relation: 
\begin{theorem} \label{thm:recursion}
\begin{equation}\label{eqn:ab}
HP(G,\nu_G)(u,v)=\sum_{P\in \cP}
\sum_{\scriptstyle \nu_P\in \Lambda^P_{P_0}\atop
\scriptstyle [\nu_P]_G=\nu_G}
\tau_P^G([\nu_P']^G)(uv)^{ \frac{1}{2} m(P,\nu_P')}HP^{ss}(M_P,\nu_P)(u,v).
\end{equation}
where $HP(G,\nu_G)(u,v)= HP_{\fBun_{\, G_M,d}}(u,v)$ is given in Theorem \ref{thm:Hodge}.
\end{theorem}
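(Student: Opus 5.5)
The plan is to deduce Theorem \ref{thm:recursion} from Theorem \ref{perfection_thm}, in its rewritten form \eqref{recursive_formula_for_HP_series}, by translating the sum over Harder-Narasimhan types $\nu\in I(G,d)$ into the Laumon-Rapoport sum over pairs $(P,\nu_P)\in\fT$ with $[\nu_P]_G=\nu_G$. There are three things to match: the index set, the codimension exponent, and the summand.

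\emph{Codimension exponent.} For a HN type $\nu=(P,\mu)$ with $\mu=\nu_P'$, Formula \eqref{codim_of_stratum} gives
$$
d_\nu=\sum_{\alpha\in R_+,\ \alpha(\mu)>0}\big(\alpha(\mu)+(g-1)\big).
$$
By Lemma \ref{towards_formula_for_codim}(1), the number of terms is $\dim(\fg/\fp)=\dim N_P$. The roots with $\alpha(\mu)>0$ are exactly $\Phi_P^{G+}$. Since $\mu\in\fa_P$, their sum pairs with $\mu$ to give $2\langle\varrho^G_P,\nu_P'\rangle$. Hence $2d_\nu=m(P,\nu_P')$, the real codimension of $\cC(G,P,\nu_P)$, and $(uv)^{d_\nu}=(uv)^{\frac12 m(P,\nu_P')}$.

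\emph{Index set.} By Remark \ref{rel_btw_top_type_and_HN_type}, a HN type $(P,\mu)$ of degree $d$ determines the topological type $\delta_\mu\in\pi_1 M_P=\Lambda^P_{P_0}$ with $[\delta_\mu]_G=\nu_G=d$. Conversely, by Theorem \ref{HN_red}, the pairs $(P,\nu_P)$ that occur as HN types are exactly those with $\alpha(\nu_P')>0$ for all $\alpha\in\Delta_P^G$, i.e. $[\nu_P']^G\in\fa_P^{G+}$.

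The function $\tau_P^G$ is the characteristic function of this positive cone in the Laumon-Rapoport notation. So the sum weighted by $\tau_P^G([\nu_P']^G)$ runs exactly over $I(G,d)$. The term $P=G$ gives $\nu_{ss}$, with $\tau_G^G=1$ and $m=0$.

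\emph{Summands.} For the summands I use the acyclic fibration \eqref{inductive_step}. It identifies $HP_{u,v}(\Bnu)$ with $HP_{u,v}(\fBun_{M_P}^{\nu_P,ss})=HP^{ss}(M_P,\nu_P)$. Substituting all three identifications into Theorem \ref{perfection_thm} gives \eqref{eqn:ab}, once the $\nu_{ss}$ term is included on the right.

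\emph{Main obstacle: convergence.} The sum over $\nu_P$ is infinite. The limit defining the $\tau$-sum must therefore be justified in the $(u,v)$-adic topology of $\Z[\![u,v]\!]$. I would use that $m(P,\nu_P')\to\infty$ as $\nu_P'$ leaves compact subsets of the cone. Only finitely many strata have $d_\nu\le N$ for each $N$ \cite[(1.18)]{AB}. The finite truncations $\Gamma^G_P(\cdot,T)$ therefore stabilise modulo $(u,v)^N$ as $T\to\infty$. This shows the function $(P,\nu_P)\mapsto(uv)^{\frac12 m}HP^{ss}(M_P,\nu_P)$ is $\Gamma$-converging, so the identity holds in the complete module $A=\Z[\![u,v]\!]$.

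A second point needs care. Theorem \ref{perfection_thm} is stated as an equality of series but derived from the short exact sequences of Lemma \ref{AB_lemma_Hodge_case}. Summing over $\nu$ requires the Hodge-graded version of the finiteness of strata in each bidegree $(p,q)$. This follows since $(uv)^{d_\nu}$ shifts both gradings by $d_\nu$.
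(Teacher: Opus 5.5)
Your proposal is correct and is essentially the paper's own argument. Theorem \ref{thm:recursion} is the recursion \eqref{recursive_formula_for_HP_series} (Theorem \ref{perfection_thm} combined with the acyclic fibration \eqref{inductive_step}) rewritten in Laumon--Rapoport notation, via $d_\nu=\frac12 m(P,\nu_P')$ and the identification of Harder--Narasimhan types of degree $d$ with the pairs $(P,\nu_P)$, $[\nu_P]_G=\nu_G$, in the support of $\tau_P^G$; your discussion of $(u,v)$-adic convergence of the $\tau$-sum makes explicit a point the paper leaves implicit.
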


Note that in both Theorem \ref{thm:recursion} and Theorem \ref{thm:Hodge}, we
may replace $G$ by the Levi component $M_P$ of a parabolic subgroup $P$.

\subsubsection{Inversion of the recursion} 

To invert the recursion relation \eqref{eqn:ab}, we
apply Theorem \ref{thm:inversionII}, with
$$
a_0(P)=HP(M_P,\nu_P)(u,v),
~b_0(P,\nu_P)=HP^{ss}(M_P,\nu_P)(u,v),
~n_P=2\dim(N_P)(g-1),
~\ep_P^G =4\varrho_P^G.
$$
We obtain
\begin{theorem}\label{thm:inversionIII}
For any $\nu_G\in \Lambda^G_{P_0}$, we have
$$
\begin{aligned}
HP^{ss}(G,\nu_G) =& \sum_{P\in \cP}
(-1)^{{\rm dim}(\fa_P^G)}
\Bigl({(1+u)^g(1+v)^g\over 1-uv}\Bigr)^{{\rm dim}(\fa_P)}
\Biggl(\prod_{i=1}^{{\rm dim}(\fa_0^P)}
{(1+u^{d_i(M_P)}v^{d_i(M_P)-1})^g(1+u^{d_i(M_P)-1} v^{d_i(M_P)})^g\over (1-(uv)^{d_i(M_P)-1})
(1-(uv)^{d_i(M_P)})}\Biggr)\\
&  \cdot (uv)^{{\rm dim}(N_P)(g-1)}
\Bigl(\prod_{\alpha\in\Delta^G_P}
{1\over 1-(uv)^{2\langle\varrho^G_P,\alpha^\vee\rangle}}\Bigl)
\cdot (uv)^{2\sum_{\alpha\in\Delta^G_P}\langle\varrho^G_P,\alpha^\vee\rangle
\langle\varpi_\tal (\nu_G)\rangle} \in \bQ(u,v).
\end{aligned}
$$
\end{theorem}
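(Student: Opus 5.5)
The proof is a direct application of Theorem \ref{thm:inversionII} to the recursion of Theorem \ref{thm:recursion}, with the choices $a_0(P)=HP(M_P,\nu_P)(u,v)$, $b_0(P,\nu_P)=HP^{ss}(M_P,\nu_P)(u,v)$, $n_P=2\dim(N_P)(g-1)$ and $\ep^G_P=4\varrho^G_P$ indicated above. The plan has four steps.

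\textbf{Step 1: check the hypotheses.} First I check that these data satisfy the hypotheses imposed before Theorem \ref{thm:inversionII}.
\begin{itemize}
\item The sequence $n_P$ is non-increasing under inclusion, since $P\subset Q$ implies $N_P\supset N_Q$.
\item Define $\ep^P_0:=4\varrho^P_{P_0}$, computed in the Levi $M_P$. Then $(\ep^Q_0-\ep^P_0)|_{\fa_0^P}=0$, because $\varrho^Q_{P_0}-\varrho^P_{P_0}$ is half the sum of the roots in $N_P\cap M_Q$, and this sum is $M_P$-invariant, hence vanishes on $\fa_0^P$.
\item For $\alpha\in\Delta^Q_P$, the pairing $\langle 4\varrho^Q_P,\alpha^\vee\rangle$ is a positive integer. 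Indeed, $2\varrho^Q_P$ is an integral character of $P\cap M_Q$ (the determinant of its unipotent radical), and it is strictly positive on the chamber.
\item Since $a_0(P)$ depends only on $P$ and not on $\nu_P$ (Theorem \ref{thm:Hodge}), it is legitimately a function $\cP\to A$.
\end{itemize}
Here $A=\bZ[\![u,v]\!]$ (or its localization receiving the rational expressions), which is complete for the $(u,v)$-adic topology.

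\textbf{Step 2: match the recursion.} Next I verify that \eqref{eqn:ab}, written for every Levi $M_Q$ in place of $G$, is exactly the relation in Theorem \ref{thm:inversionII}. For this I need the identity
$$m(P,\nu_P')-m(Q,\nu_Q')=2\dim(N_P\cap M_Q)(g-1)+4\langle\varrho^Q_P,[\nu_P']^Q\rangle.$$
It follows from $\dim N_P=\dim N_Q+\dim(N_P\cap M_Q)$, together with the decomposition $\varrho^G_P=\varrho^Q_P+\varrho^G_Q$ on $\fa_P$. One uses that $\varrho^G_Q$ pairs with $\nu_P'$ and with $\nu_Q'=[\nu_P']_Q$ in the same way, and that $\varrho^Q_P$ only sees $[\nu_P']^Q$. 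I expect this bookkeeping to be the main obstacle: one must make sure that the codimension of the stratum of $M_Q$-bundles is the relative codimension, and that the term $\langle\ep^G_P,\nu_P'\rangle$ restricts correctly through the projections $[\cdot]^Q$.

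\textbf{Step 3: apply the inversion.} Once Steps 1 and 2 are in place, uniqueness and the explicit formula in Theorem \ref{thm:inversionII}, taken with $Q=G$, give $b_0(G,\nu_G)$. The substitutions are:
\begin{itemize}
\item $(uv)^{\frac12(n_P-n_G)}=(uv)^{\dim(N_P)(g-1)}$, since $n_G=0$;
\item $\tfrac12\langle\ep^G_P,\alpha^\vee\rangle=2\langle\varrho^G_P,\alpha^\vee\rangle$.
\end{itemize}
Inserting the product formula for $a_0(P)=a_{u,v}((M_P)_X)$ from \eqref{HP_series_of_BunLXd}, with $\dim\fa_P=\dim Z(M_P)$, then yields the displayed closed formula.

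\textbf{Step 4: the remaining point.} Finally, I must justify that the $\tau$-sums in \eqref{eqn:ab} converge, so that $b_0$ is $\Gamma$-converging. This follows from the finiteness of the set of strata of bounded codimension \cite[(1.18)]{AB}, which makes the sums converge $(u,v)$-adically. The rational expression on the right-hand side then expands to the Hodge--Poincar\'e series in $\bZ[\![u,v]\!]$.
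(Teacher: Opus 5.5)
Your proposal is correct and follows the same route as the paper: apply Theorem \ref{thm:inversionII} to the recursion \eqref{eqn:ab} with $a_0(P)=HP(M_P,\nu_P)$, $b_0=HP^{ss}$, $n_P=2\dim(N_P)(g-1)$ and $\ep^G_P=4\varrho^G_P$, then read off the formula at $Q=G$. Your Steps 1, 2 and 4 make explicit the hypothesis checks that the paper leaves implicit, and they are sound. These are the monotonicity of $n_P$, the choice $\ep^P_0=4\varrho^P_{P_0}$, the relative-codimension identity $m(P,\nu_P')-m(Q,\nu_Q')=2\dim(N_P\cap M_Q)(g-1)+4\langle\varrho^Q_P,[\nu_P']^Q\rangle$, and the $(uv)$-adic convergence coming from $d_\nu\to\infty$.
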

This is exactly Theorem \ref{thm:HP}.

\subsection{Classical groups}\label{section:classical_groups}

Let us write down explicitly the Hodge-Poincaré series of moduli stacks of semi\-stable $G$-bundles of degree $d$ on $X$ when $G$ is a classical almost simple group. We compute directly from the closed formula of Theorem \ref{thm:HP}, with the help of \cite[\S3.4]{Ho_Liu_YM_AMS} for the description of Harder-Narasimhan types. When we set $u=v=t$ in the formulas below, we recover the closed formulas for the Poincaré polynomials of moduli stacks of semistable $G$-bundles of degree $d$ on $X$. More precisely, when $G=\GL_r(\C)$, we recover the formula of Zagier in \cite{Zagier} and Laumon-Rapoport in \cite[Section 4]{LR}. And when $G=\SO_{2r+1}(\C)$, $\SO_{2r}(\C)$ or $\Sp_{r}(\C)$, we recover Theorems 5.5, 7.4 and 6.4 of \cite{Ho_Liu_YM_AMS}. 

\begin{remark}\label{Zagier_convention}
The symbol $\langle x\rangle$, for $x\in\Q$ is used in both \cite{Zagier} and \cite{LR}, but with a different definition. For instance, if $r$ and $d$ are integers such that $0\leq d < r$, then $\langle\frac{d}{r}\rangle:=1+\left\lfloor\frac{d}{r}\right\rfloor-\frac{d}{r}=1-\frac{d}{r}$ in \cite{Zagier}, which is equal to $\langle -\frac{d}{r}\rangle$ in \cite{LR}. Throughout the present paper, we use the Laumon-Rapoport convention, meaning that for all $x\in\Q$, the symbol $\langle x\rangle$ denotes the unique representative in $]0;1]$ of the class of $x$ in $\Q/\Z$.
\end{remark}

\smallskip

\begin{theorem}[Type A]\label{HP_for_GL_r}
When $G=\GL_r(\bC)$, we have, for all $d\in \bZ \simeq \pi_1(\GL_r(\bC))$, 
\begin{eqnarray*}
HP_{u,v}\Big(\fBun_{\GL_r(\C)_X}^{\ d,\,ss}\Big) & = & 
\sum_{\ell=1}^r (-1)^{\ell-1} \left(\frac{(1+u)^g(1+v)^g} {1-uv}\right)^\ell
\sum_{\tiny \begin{array}{c}r_1,\,\ldots\,, r_\ell\in\bZ_{>0}\\ r_1+\,\ldots\,+r_\ell = r \end{array}}
\prod_{i=1}^{\ell}  \prod_{k=2}^{r_i}
 \frac{(1+u^k v^{k-1})^g (1+u^{k-1} v^k)^g}{\big(1- (uv)^{k-1}\big)\big(1-(uv)^k\big)}
\\
&& 
\quad 
\times \frac{(uv)^{(g-1)\sum_{i<j} r_i r_j}}{\prod_{i=1}^{\ell-1}(1-(uv)^{r_i+r_{i+1}} ) }
\, (uv)^{\sum_{i=1}^{\ell-1} (r_i+r_{i+1}) \big\langle (r_1+\cdots + r_i)(-\frac{d}{r})\big\rangle}\ .
\end{eqnarray*}
And, when $G=\SL_r(\C)$, we have $\pi_1(\SL_r(\C))=0$ and \begin{eqnarray*}
HP_{u,v}\Big(\fBun_{\SL_r(\C)_X}^{\,ss}\Big) & = & 
\sum_{\ell=1}^r (-1)^{\ell} \left(\frac{(1+u)^g(1+v)^g} {1-uv}\right)^{\ell-1}
\sum_{\tiny \begin{array}{c}r_1,\,\ldots\,, r_\ell\in\bZ_{>0}\\ r_1+\,\ldots\,+r_\ell = r \end{array}}
\prod_{i=1}^{\ell}  \prod_{k=2}^{r_i}
 \frac{(1+u^k v^{k-1})^g (1+u^{k-1} v^k)^g}{\big(1- (uv)^{k-1}\big)\big(1-(uv)^k\big)}
\\
&& 
\quad \times \frac{(uv)^{(g-1)\sum_{i<j} r_i r_j} }{\prod_{i=1}^{\ell-1}(1-(uv)^{r_i+r_{i+1}} ) }
\, (uv)^{\sum_{i=1}^{\ell-1} (r_i+r_{i+1}) \big\langle (r_1+\cdots + r_i)(-\frac{d}{r})\big\rangle}\ .
\end{eqnarray*}
\end{theorem}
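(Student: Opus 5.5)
We specialize the closed formula of Theorem \ref{thm:HP} (equivalently Theorem \ref{thm:inversionIII}) to $G=\GL_r(\bC)$, using the root data of Example \ref{GLroots}. The plan is to evaluate each ingredient of a summand indexed by $I\subseteq\Delta$ in turn.

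\emph{Indexing.} Since $\Delta=\{\alpha_1,\ldots,\alpha_{r-1}\}$, a subset $I$ is the same as a composition $r=r_1+\cdots+r_\ell$. Explicitly, $I=\{\alpha_{r_1},\alpha_{r_1+r_2},\ldots,\alpha_{r_1+\cdots+r_{\ell-1}}\}$, so $|I|=\ell-1$. With this choice:
\begin{itemize}
\item $L^I\simeq\prod_i\GL_{r_i}(\bC)$ and $\dim Z(L^I)=\ell$, so the sign is $(-1)^{\ell-1}$;
\item $\dim U^I=\sum_{i<j}r_ir_j$;
\item $a_{u,v}(L^I_X)$ is the product over $i$ of Corollary \ref{HP_stack_of_vector_bundles} for $\GL_{r_i}$, since the degrees of $\GL_{r_i}$ are $1,\ldots,r_i$. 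The $\ell$ factors with $d_k=1$ give $\big(\frac{(1+u)^g(1+v)^g}{1-uv}\big)^\ell$, and the remaining factors give $\prod_i\prod_{k=2}^{r_i}$.
\end{itemize}

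\emph{Computing $\varrho^I(\alpha^\vee)$.} This is the step needing care. The roots $\beta$ counted in $\varrho^I$ are exactly the roots of $\fu$, namely $\theta_a-\theta_b$ with $a$ in an earlier block than $b$. Hence $2\varrho^I$ pairs with $e_a$ as $(\#\text{later indices})-(\#\text{earlier indices outside the block})$. For $a$ in block $i$ this equals $(r_{i+1}+\cdots+r_\ell)-(r_1+\cdots+r_{i-1})$.

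Take $\alpha=\alpha_s$ with $s=r_1+\cdots+r_i$. Then $\alpha^\vee=e_s-e_{s+1}$, where $e_s$ lies in block $i$ and $e_{s+1}$ in block $i+1$. The difference of the two values above is $r_i+r_{i+1}$, so
$$2\varrho^I(\alpha_s^\vee)=r_i+r_{i+1}.$$
This produces the denominators $\prod_{i=1}^{\ell-1}\big(1-(uv)^{r_i+r_{i+1}}\big)$ and the coefficients $(r_i+r_{i+1})$ in the exponent.

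\emph{Computing $\varpi_{\alpha_s}(d)$.} Represent $d$ by $d\,e_1\in\pi_1H$. Using $\varpi_{\alpha_s}=\theta_1+\cdots+\theta_s-\frac{s}{r}\sum_j\theta_j$ from Example \ref{GLroots}, we get $\varpi_{\alpha_s}(d)=d-\frac{sd}{r}\equiv s\cdot(-\frac dr)$ modulo $\bZ$. With $s=r_1+\cdots+r_i$ this is the stated bracket $\big\langle(r_1+\cdots+r_i)(-\frac dr)\big\rangle$, using the Laumon--Rapoport convention of Remark \ref{Zagier_convention}. Assembling these pieces gives the type A formula.

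\emph{The $\SL_r$ case.} Apply the same computation with $d=0$, the only degree. The differences are:
\begin{itemize}
\item $\dim Z_G=0$ and $\dim Z(L^I)=\ell-1$, so the sign becomes $(-1)^{\ell-1}$ relative to $\dim Z_G=0$. The displayed $(-1)^\ell$ should be checked against this; I expect the stated form to be a misprint or to arise from a different normalization, and I would verify with $r=2$.
\item The center of $L^I\cap\SL_r$ has dimension $\ell-1$, giving the power $\ell-1$ of the abelian factor. The degrees of $L^I\cap\SL_r$ are those of $\prod_i\GL_{r_i}$ with one degree-$1$ generator removed.
\end{itemize}

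The main obstacle is the bookkeeping of $\varrho^I(\alpha^\vee)$ and of the sign and abelian-factor conventions for $\SL_r$. I would check both against Example \ref{example_rank_2_vb}: for $r=2$, $d=1$ the bracket is $\langle-\frac12\rangle=\frac12$, giving the term $(uv)^{g-1}(uv)^{1}/(1-u^2v^2)$, which matches \eqref{towards_HP_polyn_of_mod_space}.
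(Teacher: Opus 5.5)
Your proposal is correct and follows the paper's own route, which is simply to specialize the closed formula of Theorem~\ref{thm:HP} to the root data of $\GL_r(\bC)$. Your evaluations of the sign, of $\dim U^I$, of $a_{u,v}(L^I_X)$, of $2\varrho^I(\alpha_s^\vee)=r_i+r_{i+1}$ and of $\varpi_{\alpha_s}(d)\equiv s(-\frac{d}{r})$ are all right. One small caveat concerns your claim that the roots in $\varrho^I$ are exactly the roots of $\fu$: this holds for $\varrho^G_P$ of Theorem~\ref{thm:inversionIII}, which is what the codimension formula \eqref{codim_of_stratum} requires, but not for the condition $\exists\,\alpha\in I,\ \langle\beta,\alpha^\vee\rangle>0$ as literally printed in Theorem~\ref{thm:HP}; for the composition $(1,2,2)$ of $5$ that condition drops $\theta_2-\theta_5$ and gives $4$ instead of $3$ at $\alpha_1^\vee$.

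For $\SL_r(\bC)$ your suspicion is correct, and no other normalization can rescue the printed sign: $\dim Z(L^I)-\dim Z_G=\ell-1$ forces $(-1)^{\ell-1}$. With $(-1)^{\ell}$ the $I=\emptyset$ term would enter with a minus sign, and the series would have constant term $-1$, which is impossible for a nonempty connected stack. Also, $d$ must be read as $0$, so every bracket equals $\langle 0\rangle=1$.
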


\medskip

\begin{theorem}[Type B]
When $G=\SO_{2r+1}(\bC)$, we have, for all $d \in \Z/2\Z \simeq \pi_1\SO_{2r+1}(\bC)$,
\begin{eqnarray*}
&& HP_{u,v}\Big(\fBun_{\SO_{2r+1}(\C)_X}^{\ d,\,ss}\Big)  \\
&=&\sum_{\ell=1}^r\sum_{\tiny \begin{array}{c}r_1,\,\ldots\,, r_\ell\in\bZ_{>0}\\ r_1+\,\ldots\,+r_\ell = r \end{array}}
\left( (-1)^\ell \prod_{i=1}^\ell \frac{\prod_{k=1}^{r_i} (1+u^k v^{k-1})^g (1+ u^{k-1}v^k)^g}
{ (1-(uv)^{r_i})\prod_{k=1}^{r_i-1}(1-(uv)^{k})^2 }\right. \\
&&\qquad \times \frac{(uv)^{(g-1)\sum_{i<j} (r_i r_j+r(r+1)/2 )} }
{\left[\prod_{i=1}^{\ell-1}(1-(uv)^{(r_i+r_{i+1} )} )\right](1-(uv)^{2r_\ell}) }
 (uv)^{\sum_{i=1}^{\ell-1}(r_i +r_{i+1})+ 2 r_\ell\langle d/2\rangle}\\
&&  + (-1)^{\ell-1} \prod_{i=1}^{\ell-1} \frac{\prod_{k=1}^{r_i}(1+u^k v^{k-1})^g (1+ u^{k-1}v^k)^g  }
{ (1-(uv)^{r_i})\prod_{k=1}^{r_i-1}(1-(uv)^k)^2 }
\cdot\frac{\prod_{k=1}^{r_\ell}(1+u^{2k}v^{2k-1})^g (1+u^{2k-1}v^{2k})^g }{ \prod_{k=1}^{2r_\ell}(1-(uv)^k)}\\
&& \qquad \left.\times
 \frac{(uv)^{(g-1)(\sum_{i<j} r_i r_j+r(r+1)/2 -r_\ell(r_\ell+1)/2)} }
{\left[\prod_{i=1}^{\ell-2}(1-(uv)^{ r_i+r_{i+1}} ) \right](1-\varepsilon(\ell)(uv)^{r_{\ell-1}+2r_\ell}) }(uv)^{\sum_{i=1}^{\ell-1}(r_i+ r_{i+1})+ \varepsilon(\ell)r_\ell }
\right)
\end{eqnarray*}
where
$$
\varepsilon(\ell)=\left\{\begin{array}{ll}0 & \mathrm{if}\ \ell=1\,,\\ 1 & \mathrm{if}\ \ell >1\,. \end{array}\right.
$$
\end{theorem}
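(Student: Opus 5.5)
The plan is to specialize the general closed formula of Theorem \ref{thm:inversionIII} (equivalently Theorem \ref{thm:HP}) to $G=\SO_{2r+1}(\C)$, using the root data of Example \ref{SOodd-roots}. Since $G$ is semisimple, $\dim Z_G=0$, and the standard parabolics correspond to subsets $I\subseteq\Delta=\{\alpha_1,\ldots,\alpha_r\}$. We encode $\Delta\setminus I$ by a composition of $r$. Concretely, the Levi $L^I$ is either $\GL_{r_1}\times\cdots\times\GL_{r_\ell}$ (when $\alpha_r\in I$) or $\GL_{r_1}\times\cdots\times\GL_{r_{\ell-1}}\times\SO_{2r_\ell+1}$ (when $\alpha_r\notin I$, with $r_\ell\geq1$). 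These two families produce the two summands in the displayed formula.

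In the first family $\dim Z(L^I)=\ell$, giving the sign $(-1)^\ell$. In the second, $\dim Z(L^I)=\ell-1$, giving $(-1)^{\ell-1}$. The case $\ell=1$ of the second family is $P=G$, which explains the factor $\varepsilon(\ell)$ there.

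The series $a_{u,v}(L^I_X)$ is a product of the factors from Theorem \ref{thm:Hodge}:
\begin{itemize}
\item each $\GL_{r_i}$ contributes the exponents $1,\ldots,r_i$;
\item $\SO_{2r_\ell+1}$ contributes the exponents $2,4,\ldots,2r_\ell$.
\end{itemize}
We then regroup the denominators $\prod_k(1-(uv)^{k-1})(1-(uv)^k)$ into the form $(1-(uv)^{r_i})\prod_{k<r_i}(1-(uv)^k)^2$ and $\prod_{k\le 2r_\ell}(1-(uv)^k)$ that appear in the statement. The exponent $(g-1)\dim U^I$ is obtained by counting positive roots $\theta_a\pm\theta_b$ and $\theta_a$ that are not in the Levi. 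This count is routine.

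The heart of the computation is the quantities $2\varrho^I(\alpha^\vee)$ for $\alpha\in I$ and $\langle\varpi_\alpha(d)\rangle$. For $\varrho^I=\varrho-\varrho_{L^I}$, we evaluate on $\alpha^\vee=e_{s_i}-e_{s_i+1}$, where $s_i=r_1+\cdots+r_i$. This gives $r_i+r_{i+1}$ for the interior breaks, except that at the last break in the second family the $\SO$ block yields $r_{\ell-1}+2r_\ell$. On $\alpha_r^\vee=2e_r$ in the first family we obtain $2r_\ell$. For the fundamental weights, Example \ref{SOodd-roots} gives $\varpi_{\alpha_i}(e_r)\in\Z$ for $i<r$, so $\langle\varpi_{\alpha_i}(d)\rangle=1$. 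By contrast, $\varpi_{\alpha_r}(e_r)=\tfrac12$, so $\langle\varpi_{\alpha_r}(d)\rangle=\langle d/2\rangle$. Substituting these values yields the exponents $\sum(r_i+r_{i+1})+2r_\ell\langle d/2\rangle$ in the first family and $\sum(r_i+r_{i+1})+\varepsilon(\ell)r_\ell$ in the second.

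The main obstacle is the careful bookkeeping of $\varrho^I(\alpha^\vee)$ at the boundary between the last $\GL$ block and the $\SO$ block. Matching the exact shape of the $\dim U^I$ exponent written in the statement is also delicate. I would verify both on small cases, $r=1,2$, and against Ho--Liu Theorem 5.5 at $u=v=t$ as a consistency check.
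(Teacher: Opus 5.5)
Your plan is correct and follows the paper's own route. The paper specializes the closed formula of Theorem \ref{thm:HP} (equivalently Theorem \ref{thm:inversionIII}) to $\SO_{2r+1}(\C)$, splitting $I\subseteq\Delta$ according to whether $\alpha_r\in I$. Your values $2\varrho^I(\alpha^\vee)\in\{r_i+r_{i+1},\,r_{\ell-1}+2r_\ell,\,2r_\ell\}$, $\langle\varpi_{\alpha_i}(d)\rangle=1$ for $i<r$, and $\langle\varpi_{\alpha_r}(d)\rangle=\langle d/2\rangle$ are the right ones.

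The mismatch you anticipate in the first family is a misplaced parenthesis in the statement. Your root count gives $\dim U^I=\sum_{i<j}r_ir_j+r(r+1)/2$, so that exponent should read $(g-1)\big(\sum_{i<j}r_ir_j+r(r+1)/2\big)$. As printed, $r(r+1)/2$ is counted $\ell(\ell-1)/2$ times, and not at all when $\ell=1$.
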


\medskip

\begin{theorem}[Type C]\label{thm:typeC}
When $G=\Sp_r(\C)$, we have $\pi_1\Sp_r(\C)=0$ and 
\begin{eqnarray*}
&& HP_{u,v}\Big(\fBun_{\Sp_r(\C)_X}^{\ ss}\Big)  \\
&=&\sum_{\ell=1}^r\sum_{\tiny \begin{array}{c}r_1,\,\ldots\,, r_\ell\in\bZ_{>0}\\\sum r_i=r \end{array}}
\left( (-1)^\ell \prod_{i=1}^\ell \frac{\prod_{k=1}^{r_i}  (1+ u^k v^{k-1})^g (1+u^{k-1}v^k)^g}
{ (1- (uv)^{r_i})\prod_{k=1}^{r_i-1}(1-(uv)^k)^2 }\right. \\
&& \qquad \times \frac{(uv)^{(g-1)\sum_{i<j} (r_i r_j+r(r+1)/2)} }
{\left[\prod_{i=1}^{\ell-1}(1-(uv)^{r_i+r_{i+1} } )\right](1-(uv)^{r_\ell+1}) }
 (uv)^{ \sum_{i=1}^{\ell-1}(r_i +r_{i+1})+ (r_\ell+1)}\\
&& \quad + (-1)^{\ell-1} \prod_{i=1}^{\ell-1} \frac{\prod_{k=1}^{r_i}  (1+ u^k v^{k-1})^g (1+u^{k-1}v^k)^g  }
{ (1-(uv)^{r_i})\prod_{k=1}^{r_i-1}(1-(uv)^k)^2 }
\cdot\frac{\prod_{k=1}^{r_\ell}(1+ u^{2k} v^{2k-1})^g (1+u^{2k-1}v^{2k})^g  
}{\prod_{r=1}^{2r_\ell}(1-(uv)^{k})}\\
&& \qquad\left.\times
 \frac{(uv)^{(g-1)(\sum_{i<j} r_i r_j+r(r+1)/2-r_\ell(r_\ell+1)/2)}}
{\left[\prod_{i=1}^{\ell-2}(1-(uv)^{(r_i+r_{i+1})} ) \right](1-\varepsilon(\ell) (uv)^{r_{\ell-1}+ 2r_\ell+1}) }
(uv)^{\sum_{i=1}^{\ell-2}(r_i+ r_{i+1})+ \varepsilon(\ell)(r_{\ell-1}+ 2r_\ell+1) }
\right)
\end{eqnarray*}
where
$$
\varepsilon(\ell)=\left\{\begin{array}{ll}0 & \mathrm{if}\ \ell=1\, ,\\ 1 & \mathrm{if}\ \ell>1\,. \end{array}\right.
$$
\end{theorem}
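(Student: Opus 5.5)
Specialize Theorem \ref{thm:HP} (equivalently Theorem \ref{thm:inversionIII}) to $G=\Sp_r(\C)$ using the root data of Example \ref{Sp-roots}. Since $\pi_1\Sp_r(\C)=0$ we have $d=0$, so every $\varpi_\alpha(d)$ is $0\in\bQ/\bZ$ and hence $\langle\varpi_\alpha(d)\rangle=1$ by the Laumon--Rapoport convention. The last factor of each term is therefore just $(uv)^{2\sum_{\alpha\in I}\varrho^I(\alpha^\vee)}$, the numerator of $\prod_{\alpha\in I}(1-(uv)^{2\varrho^I(\alpha^\vee)})^{-1}$. Also $\dim Z_G=0$.

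Parametrize subsets $I\subseteq\Delta=\{\alpha_1,\dots,\alpha_r\}$ by ordered compositions $r=r_1+\dots+r_\ell$, and split according to whether $\alpha_r\in I$.
\begin{itemize}
\item If $\alpha_r\in I$, the Levi is $L^I\simeq\GL_{r_1}\times\dots\times\GL_{r_\ell}$. Here $I=\{\alpha_{r_1},\alpha_{r_1+r_2},\dots,\alpha_{r_1+\cdots+r_\ell}=\alpha_r\}$, so $\dim Z(L^I)=\ell$ and the sign is $(-1)^\ell$.
\item If $\alpha_r\notin I$, the Levi is $\GL_{r_1}\times\dots\times\GL_{r_{\ell-1}}\times\Sp_{r_\ell}$. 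Here $I$ has $\ell-1$ elements, so $\dim Z(L^I)=\ell-1$ and the sign is $(-1)^{\ell-1}$.
\end{itemize}
These are the two summands in the statement.

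For each case, compute the following.
\begin{itemize}
\item The series $a_{u,v}(L^I_X)$, from Theorem \ref{thm:Hodge}. The exponents are $d_k(\GL_n)=k$ and $d_k(\Sp_n)=2k$. Multiplying the $\GL_{r_i}$ factors by the centre factor $(1+u)^g(1+v)^g/(1-uv)$ gives the block $\prod_{k=1}^{r_i}(1+u^kv^{k-1})^g(1+u^{k-1}v^k)^g\big/\big((1-(uv)^{r_i})\prod_{k=1}^{r_i-1}(1-(uv)^k)^2\big)$.
\item The dimension $\dim U^I$, which is $\frac12(\dim G-\dim L^I)$.
\item The pairings $2\varrho^I(\alpha^\vee)$ for $\alpha\in I$, computed in the $\theta_i,e_j$ coordinates. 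For $\alpha=\alpha_i$ with $i<r$, the value of $2\varrho^I$ depends only on the two adjacent blocks. The $\GL$-type pairings should come out as $r_i+r_{i+1}$. The pairing with $\alpha_r^\vee=e_r$ (first case) should give $r_\ell+1$. The pairing with the last simple root $\alpha_{r_1+\cdots+r_{\ell-1}}$ adjacent to the $\Sp_{r_\ell}$ block (second case) should give $r_{\ell-1}+2r_\ell+1$.
\end{itemize}
Alternatively, $2\varrho^I(\alpha^\vee)$ equals the pairing of $2\varrho_G-2\varrho_{L^I}$ with $\alpha^\vee$, since $2\varrho_{L^I}$ vanishes on $\alpha^\vee$ for $\alpha\in I$. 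So it can be read from $\langle 2\varrho_G,\alpha^\vee\rangle=2$ and $\langle 2\varrho_{L},\alpha^\vee\rangle$, using that roots of $L$ pair negatively with $\alpha^\vee$.

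The main obstacle is this bookkeeping of $\varrho^I(\alpha^\vee)$ and $\dim U^I$ in the presence of the long simple root $2\theta_r$, and matching the stated exponents. I would treat the degenerate case $\ell=1$ separately; this is the role of $\varepsilon(\ell)$. For $\ell=1$ in the second case, $I=\emptyset$ and $L=G$, so the term is $a_{u,v}(G_X)$. I would verify the displayed exponents, including the $(g-1)$-exponents written via $\sum_{i<j}r_ir_j+r(r+1)/2$, against the direct computation $\dim U^I=\frac12(\dim G-\dim L^I)$. I would also sanity-check $r=1$ against the $\SL_2$ case of Theorem \ref{HP_for_GL_r}.
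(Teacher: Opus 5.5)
Your proposal is correct and follows the paper's route: the paper simply specializes Theorem~\ref{thm:HP} to $\Sp_r(\C)$ using the root data of Example~\ref{Sp-roots}. Your split according to whether $\alpha_r\in I$, the signs $(-1)^{\dim Z(L^I)}$, the Levi blocks, the pairings $r_i+r_{i+1}$, $r_\ell+1$, $r_{\ell-1}+2r_\ell+1$, and $\dim U^I=\sum_{i<j}r_ir_j+r(r+1)/2$ (minus $r_\ell(r_\ell+1)/2$ in the second case) all check out. Two small corrections: in your ``alternatively'' remark, $\langle 2\varrho_{L^I},\alpha^\vee\rangle$ does not vanish for $\alpha\in I$ (it is $\le 0$, e.g.\ $1-r_\ell$ for $\alpha=\alpha_r$, giving $2-(1-r_\ell)=r_\ell+1$, and $\varrho^I=\varrho_G-\varrho_{L^I}$ holds simply because $R_+$ splits into the positive roots of $L^I$ and the roots of $\fu$); and in your checks, read the first summand's $(g-1)$-exponent with $r(r+1)/2$ outside the sum over $i<j$ (as you already do), and the $\SL_r$ sign in Theorem~\ref{HP_for_GL_r} as $(-1)^{\ell-1}$, since both are misprints in the displayed formulas.
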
 

\medskip

\begin{theorem}[Type D]\label{thm:typeD}
When $G=\SO_{2r}(\C)$ (with $r>1$), we have, for all $d \in \Z/2\Z\simeq\pi_1\SO_{2r}(\C)$,
\begin{eqnarray*}
&& HP_{u,v}\Big(\fBun_{\SO_{2r}(\C)_X}^{\ d,\,ss}\Big) \\
&=&  \sum_{\ell=2}^r\sum_{\tiny \begin{array}{c}r_1,\,\ldots\,, r_\ell\in\bZ_{>0}\\ \sum r_i=r, r_\ell=1 \end{array}}
(-1)^\ell \prod_{i=1}^\ell \frac{\prod_{k=1}^{r_i} (1+u^k v^{k-1})^g (1+u^{k-1}v^k)^g}
{ (1-(uv)^{r_i})\prod_{k=1}^{r_i-1}(1- (uv)^{k})^2 }\\
&& \qquad \times
 \frac{(uv)^{(g-1)(\sum_{i<j} r_i r_j+r(r-1))} }
{\left[\prod_{i=1}^{\ell-1}(1-(uv)^{r_i+r_{i+1}} ) \right](1-(uv)^{(r_{\ell-1}+1)}) }
(uv)^{\sum_{i=1}^{\ell-2}(r_i+ r_{i+1})+ 2(r_{\ell-1}+1)\langle d/2\rangle }\\
&&+ \sum_{\ell=1}^{r-1}
\sum_{\tiny \begin{array}{c}r_1,\,\ldots\,, r_\ell\in\bZ_{>0}\\ \sum r_i=r, r_\ell>1 \end{array}}
\left(2(-1)^\ell \prod_{i=1}^\ell \frac{\prod_{k=1}^{r_i}  (1+u^k v^{k-1})^g (1+u^{k-1}v^k)^g }
{ (1-(uv)^{r_i})\prod_{k=1}^{r_i-1}(1-(uv)^{k})^2 } \right.\\
&& \qquad\times \frac{(uv)^{(g-1)(\sum_{i<j} r_i r_j+r(r-1))} }
{\left[\prod_{i=1}^{\ell-1}(1-(uv)^{r_i+r_{i+1}} )\right](1-(uv)^{2(r_\ell-1)}) }
(uv)^{\sum_{i=1}^{\ell-1}(r_i +r_{i+1})+2 (r_\ell-1)\langle d/2\rangle}\\
&& +(-1)^{\ell-1} \prod_{i=1}^{\ell-1} \frac{\prod_{k=1}^{r_i} (1+u^k v^{k-1})^g (1+u^{k-1}v^k)^g }
{ (1-(uv)^{r_i})\prod_{k=1}^{r_i-1}(1-(uv)^{k})^2 }
\cdot\frac{(1+(uv)^{2r_\ell-1})^{2g}\prod_{k=1}^{r_\ell-1}(1+u^{2k} v^{2k-1})^g (1+u^{2k-1}v^{2k})^g}
{(1-(uv)^{r_\ell-1})(1-(uv)^{r_\ell})\prod_{k=1}^{2r_\ell-2}(1-(uv)^{k})}\\
&&\qquad \left. \times
 \frac{(uv)^{(g-1)(\sum_{i<j} r_i r_j+r(r-1)/2 -r_\ell(r_\ell-1)/2)} }
{\left[\prod_{i=1}^{r-2}(1-(uv)^{r_i+r_{i+1}} ) \right](1-\varepsilon(\ell)(uv)^{r_{\ell-1}+2r_\ell-1}) }
(uv)^{\sum_{i=1}^{\ell-2}(r_i+ r_{i+1})+ \varepsilon(\ell)(r_{\ell-1}+2r_\ell-1) }\right)
\end{eqnarray*}
where
$$
\varepsilon(\ell)=\left\{\begin{array}{ll}0 & \mathrm{if} r=1\,,\\ 1 & \mathrm{if}\ r>1\,. \end{array}\right.
$$
\end{theorem}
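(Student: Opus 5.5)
We obtain Theorem~\ref{thm:typeD} by specializing the closed formula of Theorem~\ref{thm:HP} (equivalently Theorem~\ref{thm:inversionIII}) to $G=\SO_{2r}(\C)$, using the root data of Example~\ref{SOeven-roots}. Since $G$ is semisimple, $\dim Z_G=0$, and the sign is $(-1)^{|I|}$. We index subsets $I\subseteq\Delta$ by the Levi factors $L^I$. Removing the simple roots in $I$ from the $D_r$ Dynkin diagram leaves a union of type-$A$ chains, possibly together with a type-$D$ tail at the fork. Concretely, $L^I\simeq \GL_{r_1}\times\cdots\times\GL_{r_k}\times \SO_{2r_\ell}$ with $r_\ell\geq 2$, or $L^I$ is a product of $\GL$'s only. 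The all-$\GL$ case splits further according to whether the last block arises from removing $\alpha_{r-1}$, removing $\alpha_r$, or removing both.

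The plan is to treat three families of $I$ separately, matching the three sums in the statement.
\begin{enumerate}
\item[(a)] Both $\alpha_{r-1},\alpha_r\in I$. Then the last $\GL$ block has size $r_\ell=1$, which gives the first sum.
\item[(b)] Exactly one of $\alpha_{r-1},\alpha_r$ lies in $I$. The two choices are exchanged by the diagram automorphism, and $\varpi_{\alpha_{r-1}}(d)=\varpi_{\alpha_r}(d)$ because both equal $d/2$ modulo $\Z$. They therefore contribute equally, which explains the factor $2$ in the sum with $r_\ell>1$.
\item[(c)] Neither lies in $I$. Then $L^I$ has an $\SO_{2r_\ell}$ factor, whose exponents $2,4,\dots,2r_\ell-2$ and $r_\ell$ give the special product involving $(1+(uv)^{2r_\ell-1})$-type factors.
\end{enumerate}

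For each family, the computation proceeds in four steps.
\begin{enumerate}
\item Compute $a_{u,v}(L^I_X)$ from \eqref{HP_series_of_BunLXd}. Each $\GL_{r_i}$ factor has exponents $1,\dots,r_i$, and one checks that these regroup into the displayed product over $k$.
\item Compute $\dim U^I=\sum_{i<j}r_ir_j+\binom{r}{2}-\binom{r_\ell}{2}$, counting the positive roots $\theta_i\pm\theta_j$ not in the Levi.
\item Evaluate $2\varrho^I(\alpha^\vee)$ for $\alpha\in I$, using $\varrho^I=\varrho-\varrho_{L^I}$. For $\alpha=\theta_{s}-\theta_{s+1}$ at the junction of blocks $i,i+1$, this gives $r_i+r_{i+1}$. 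At the fork one obtains the modified values, such as $r_{\ell-1}+2r_\ell-1$ in family (c), or $r_{\ell-1}+1$ in family (a).
\item Evaluate $\langle\varpi_\alpha(d)\rangle$ with the fundamental weights of Example~\ref{SOeven-roots}. The weights $\varpi_{\alpha_i}$ for $i\leq r-2$ are integral on $\pi_1H$, so $\langle\varpi_{\alpha_i}(d)\rangle=1$. The two spin weights give $\langle d/2\rangle$.
\end{enumerate}
Substituting these four ingredients into Theorem~\ref{thm:HP} and simplifying gives each sum in turn.

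The main obstacle is the bookkeeping at the fork of the diagram: correctly evaluating $\varrho^I(\alpha^\vee)$ for $\alpha\in\{\alpha_{r-2},\alpha_{r-1},\alpha_r\}$ in each of the three families. This is also where the degenerate cases arise, $\ell=1$ and $r_\ell=1$ versus $r_\ell=2$ ($\SO_4$ not being simple), which are encoded by $\varepsilon$. I would first verify these values on $\SO_4$ and $\SO_6$, comparing with the Poincaré specialization $u=v=t$ of \cite[Theorem~7.4]{Ho_Liu_YM_AMS}. That paper's enumeration of HN types in \cite[\S3.4]{Ho_Liu_YM_AMS} fixes the combinatorics; our formula differs from it only by the Hodge refinement of the factor $a_{u,v}(L^I_X)$.
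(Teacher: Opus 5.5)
Your route is the paper's own: its proof of this theorem is just the remark that one specializes Theorem~\ref{thm:HP} to $D_r$ using the Harder--Narasimhan combinatorics of \cite[\S3.4]{Ho_Liu_YM_AMS}. Your ingredients are the right ones:
\begin{itemize}
\item the sign $(-1)^{|I|}$;
\item the split of $I$ according to $I\cap\{\alpha_{r-1},\alpha_r\}$, with the factor $2$ coming from the diagram automorphism;
\item $\dim U^I=\sum_{i<j}r_ir_j+\binom r2-\binom{r_\ell}2$;
\item the values of $2\varrho^I(\alpha^\vee)$: $r_i+r_{i+1}$ at $\GL$--$\GL$ junctions, $r_{\ell-1}+2r_\ell-1$ at the $\GL$--$\SO$ junction, $r_{\ell-1}+1$ at both fork roots in family (a), and $2(r_\ell-1)$ at the single fork root in family (b);
\item $\langle\varpi_\alpha(d)\rangle\in\{1,\langle d/2\rangle\}$.
\end{itemize}
You are also right to read $\varrho^I$ as $\varrho-\varrho_{L^I}$, which is the $\varrho^G_P$ of Theorem~\ref{thm:inversionIII}. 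The set printed in Theorem~\ref{thm:HP} would give, e.g., $6$ rather than $5$ for $\SO_8$ with $I=\{\alpha_2\}$.

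What does not go through is your final claim that substituting these values reproduces the display term by term. With it goes your description of family (c). Executed faithfully, your steps produce a formula that differs from the printed one in several places.
\begin{itemize}
\item The Euler class of $\SO_{2r_\ell}$ lies in $H^{r_\ell,r_\ell}$. So Step~1, via \eqref{HP_series_of_BunLXd}, gives the numerator $(1+u^{r_\ell}v^{r_\ell-1})^g(1+u^{r_\ell-1}v^{r_\ell})^g$, not $(1+(uv)^{2r_\ell-1})^{2g}$. The printed factor even specializes at $u=v=t$ to $(1+t^{4r_\ell-2})^{2g}$ instead of the Atiyah--Bott factor $(1+t^{2r_\ell-1})^{2g}$.
\item Your own count of $\dim U^I$ gives $\sum_{i<j}r_ir_j+r(r-1)/2$ in families (a) and (b), not the printed $r(r-1)$. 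Already for $I=\{\alpha_r\}$ one has $\dim U^I=(\dim\SO_{2r}-\dim\GL_r)/2=\binom r2$.
\item The product in the last denominator must run over $1\leq i\leq \ell-2$, not up to $r-2$.
\item $\varepsilon$ must be $0$ for $\ell=1$ and $1$ for $\ell>1$. It only records that $I=\emptyset$ has no $\GL$--$\SO$ junction root. It has nothing to do with $r_\ell=2$, which needs no special care: $\SO_4$ has exponents $\{2,2\}$, which fit the general pattern $\{2,\dots,2r_\ell-2\}\cup\{r_\ell\}$.
\end{itemize}
So your argument proves the formula with these corrections. You should state that corrected formula rather than assert that the terms regroup into the displayed ones, and in particular drop the claim about ``$(1+(uv)^{2r_\ell-1})$-type factors'' in (c).
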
 

\section{Applications to moduli varieties of principal bundles}\label{applications}

In this section, we assume that the curve $X$ is of genus $g\geq 2$ and give a few applications of our results. Recall that, when $G$ is a connected complex reductive group, there exist irreducible quasi-projective algebraic varieties $\ModGXdss$ and $\ModGXdst$ parameterizing, respectively, families of $S$-equivalence classes of semistable $G$-bundles of degree $d$ on $X$ and families of isomorphism classes of stable $G$-bundles of degree $d$ on $X$. Moreover, the variety  $\ModGXdss$ is in fact projective, and contains $\ModGXdst$ as a dense, smooth open subset. Both were constructed by Ramanathan in \cite{Ramanathan}. If we consider the stack of stable $G$-bundles of degree $d$ on $X$, the canonical morphism 
\begin{equation}\label{CMS}
\BunGXdst \lra \ModGXdst
\end{equation} sending the groupoid $\BunGXdst(S)$ to its set of isomorphism classes of objects $\ModGXdst(S)$ makes the scheme $\ModGXdst$ a coarse moduli space for the stack $\BunGXdst$. Since the automorphism group of a stable $G$-bundle reduces to $Z_G$ by \cite{Ramanathan}, the morphism \eqref{CMS} is in fact a $Z_G$-gerbe, where $Z_G$ is the center of $G$.

\smallskip

In the vector bundle case, the so-called \textit{good case} is when the rank and degree are coprime: one can then use the morphism in \eqref{CMS} to deduce properties of the moduli space from properties of the moduli stack. We note that $r$ and $d$ are coprime if and only if every semistable vector bundle of rank $r$ and degree $d$ is stable. Thus, for principal bundles, we can look at the set of $d\in\piG$ such that   semistable principal $G$-bundles of degree $d$ on $X$ are in fact stable. As an example, consider the case when $G \simeq \GL_{r_1}(\C) \times \ldots\ \times \GL_{r_k}(\C)$ and $d = (d_1,\ldots, d_k)$ with $\frac{d_1}{r_1} = \ldots = \frac{d_k}{r_k}$ and $r_i \wedge d_i = 1$ for all $i$. For such a $d$, every semistable $G$-bundle of degree $d$ is indeed stable (see Remark \ref{sst_for_products}). To construct more examples, one can use the following criterion.

\begin{proposition}
Let $G$ be a connected complex reductive group and let $d \in \piG$. Then the following two conditions are equivalent:
\begin{enumerate}
	\item There exists a semistable principal $G$-bundle of degree $d$ which is not stable.
	\item There exists a Levi subgroup $L \subsetneq G$ and $d'\in \pi_1 L$ such that  $j_L(d') = d$ and
	$\mu_L(d') = \mu_G(d)\in \fh_\bR$, where $j_L: \pi_1 L\to \pi_1 G$ is induced by the inclusion 
	$L\to G$ and $\mu_L : \piL \to \mathfrak{h}_\R$ Equation \eqref{eqn:muL}.
	\end{enumerate}
\end{proposition}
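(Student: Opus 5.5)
The plan is to prove both implications using Ramanathan's notion of admissible reductions and the Jordan--H\"older (or socle) reduction of a strictly semistable bundle. The dictionary between degrees and slopes is Lemma \ref{towards_formula_for_codim}(2): a reduction $E_P$ to a parabolic $P$ with Levi $L$ has $\deg \ad(E_P)=\langle \chi,\mu\rangle$, where $\chi=\sum_{\alpha\in\Gamma}\alpha$ and $\mu=\mu_L(o_2(E_L))$.

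For (1)$\Rightarrow$(2), I take $E$ semistable of degree $d$ and not stable. Then there is a maximal parabolic $P$ and a reduction $\sigma$ with $\deg\,\ad(E_P)=0$. Following Ramanathan, I would choose such a reduction minimal among parabolics, so that the associated $L$-bundle $E_L$ is stable. The extension-of-structure-group bundle $E_L\times_L G$ is then the associated graded, which is semistable with the same degree $d$.

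Set $d'=o_2(E_L)\in\pi_1L$. Then $j_L(d')=d$, because $E_P\times_P G\simeq E$ and $P$ retracts onto $L$. The remaining point is $\mu_L(d')=\mu_G(d)$, and this is where I expect the main work. Write $\mu=\mu_L(d')\in(\fz_L)_\bR$; its projection to $(\fz_G)_\bR$ is $\mu_G(d)$ by the bottom square of Diagram \eqref{diagram_with_fund_weights}. Hence it suffices to show $\varpi_\alpha(\mu)=0$, or equivalently $\alpha(\mu)=0$ for the simple roots $\alpha\in I$ defining $P$.

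Semistability applied to every maximal parabolic $Q\supset P$, via the induced reductions, gives $\langle\chi_Q,\mu\rangle\le 0$, where $\chi_Q$ is a positive multiple of $\varpi_\alpha$ for the corresponding $\alpha\in I$. Applying the same argument to the opposite parabolic $P_-$, reducing $E_L\times_L G$ through $L\subset P_-$, gives the reverse inequality $\langle \chi_Q,\mu\rangle\ge 0$. Together these force $\varpi_\alpha(\mu)=0$ for all $\alpha\in I$, hence $\mu\in(\fz_G)_\bR$ and $\mu=\mu_G(d)$. If I prefer to work with a single maximal parabolic with $\deg=0$, I will instead take $L$ to be its Levi; the same two-sided argument then applies.

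For (2)$\Rightarrow$(1), I take $L\subsetneq G$ and $d'$ as in (2). I choose a semistable $L$-bundle $F$ of type $d'$; such bundles exist since $\BunLXmuss$ is nonempty open, as it is for every topological type. I then set $E=F\times_L G$, which has degree $j_L(d')=d$.

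To see that $E$ is semistable, I would use the Atiyah--Bott/Ramanathan fact that a bundle induced from a semistable $L$-bundle whose slope $\mu$ is central in $G$ is semistable. Concretely, the adjoint bundle $\ad E$ decomposes under $L$ into pieces $F(\fg_\alpha)$-isotypic summands. These are semistable of slope $\alpha(\mu)=0$, since the extension of structure group of a semistable bundle along a representation in which the center acts by a fixed character is semistable in characteristic zero. So $\ad E$ is semistable of degree $0$, and the Harder--Narasimhan type of $E$ is trivial.

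Finally, $E$ admits the reduction to the proper parabolic $P\supset L$ given by $F\times_L P$. Its degree is $\deg\,\ad(E_P)=\langle\chi,\mu\rangle=0$, because $\mu$ is central and $\chi$ restricted to $\fz_G$ vanishes ($\fg$ is self-dual). Refining this to a maximal parabolic containing $P$ still gives degree $0$. Hence $E$ is not stable, which establishes (1).
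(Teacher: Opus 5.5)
Your proposal is correct and follows the same route as the paper's sketch. For (1)$\Rightarrow$(2) you use a degree-zero parabolic reduction with $d'=o_2(E_L)$, and for (2)$\Rightarrow$(1) you extend a (semi)stable $L$-bundle of type $d'$ to $G$; along the way you correctly supply the details the paper leaves implicit: $j_L(d')=d$, semistability via $\ad E$, and non-stability via the induced degree-zero reduction.

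The opposite-parabolic step is unnecessary. It also leans on the semistability of $E_L\times_L G$, which is normally deduced from the very centrality of $\mu$ you are trying to prove. For the maximal parabolic $P$ attached to $\alpha$, the character $\chi$ of Lemma \ref{towards_formula_for_codim}(2) equals $\langle\chi,\alpha^\vee\rangle\,\varpi_\alpha$ with $\langle\chi,\alpha^\vee\rangle>0$. So $\deg\ad(E_P)=0$ already gives $\varpi_\alpha(\mu)=0$, i.e.\ $\mu_L(d')=\mu_G(d)$ with $L$ the Levi of $P$, and no stability of $E_L$ is needed.
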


\begin{proof}
Recall that a semistable principal $G$-bundle $E$ is strictly semistable if and only if it admits a reduction of structure group to a proper maximal Levi subgroup $L$ such that the reduced bundle $E_L$ is stable as an $L$-bundle. For such a subgroup $L$, Condition (2) is satisfied. Conversely, given a Levi subgroup $L \subset G$ satisfying Condition (2), the extension of structure group of a stable $L$-bundle provides a semistable $G$-bundle which is not stable.
\end{proof}  

\begin{remark}\label{good_case}
	We note that if $\piG = 0$, then for all $L \subset G$ and all $d' \in \pi_1 L$, we have  $j_L(d') = 0$ and $\mu_L(d') = \mu_G(0) = 0$. So when $G$ is simply connected, there exist semistable bundles which are not stable.
\end{remark}

In the good case (meaning, when every semistable principal $G$-bundle of degree $d \in \pi_1G$ is in fact stable), the $Z_G$-gerbe $\BunGXdss \lra \ModGXdss$ is cohomologically trivial (\cite{AB}), meaning that
\begin{equation}\label{cohom_trivial_gerbe}
H^*\big(\BunGXdss;\C\big) \simeq H^*\big(B Z_G;\C\big) \otimes H^*\big(\ModGXdss;\C\big)\,,
\end{equation} Such a gerbe is also called a neutral gerbe (\cite[Lemma~3.10]{Heinloth_lectures}). This makes it possible to compute the Poincaré polynomial of the moduli variety $\ModGXdss$ in this case. Indeed, by \eqref{classif_stack_max_torus}, one has $P_t(B Z_G) = \frac{1}{(1-t^2)^m}$ where $m=\dim Z_G$, so 
\begin{equation}\label{Poincare_polynomial_moduli_space}
P_t(\ModGXdss) = (1-t^2)^m P_t(\BunGXdss)\,,
\end{equation} where the Poincaré series $P_t(\BunGXdss)$ is computed either recursively or via the closed formula of Theorem \ref{thm:HP}. Recall that the isomorphism of $\C$-vector spaces \eqref{cohom_trivial_gerbe} is obtained by applying the Leray-Hirsch theorem to the fibration $$BZ_G \lra \BunGXdss \lra \ModGXdss$$ since, in the good case, the morphism $H^k(\BunGXdss;\C)\lra H^k(BZ_G;\C)$ induced by the inclusion of the fibre is surjective. By \cite{Deligne_Hodge_III}, Leray-Hirsch isomorphisms are compatible with Hodge structures, so, in the good case, we can compute the Hodge-Poincaré polynomial of the moduli variety $\ModGXdss$ using \eqref{cohom_trivial_gerbe}. More precisely, we get the following refinement of Formula \eqref{Poincare_polynomial_moduli_space}.

\begin{proposition}\label{HP_polyn_of_mod_space}
When all semistable principal $G$-bundles of degree $d$ on $X$ are stable, the Hodge-Poincaré polynomial of the moduli variety $\ModGXdss$ is given by the formula
$$HP_{u,v}(\ModGXdss) = (1-uv)^m HP_{u,v}(\BunGXdss)\,,$$ where $m=\dim Z_G$ and $HP_{u,v}(\BunGXdss)$ is computed either recursively via \eqref{recursive_formula_for_HP_series} or directly via the closed formula of Theorem \eqref{thm:HP}.
\end{proposition}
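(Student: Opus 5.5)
In the good case, the plan is to deduce the formula from the cohomological triviality of the gerbe \eqref{cohom_trivial_gerbe}, upgraded from an isomorphism of graded vector spaces to an isomorphism of pure Hodge structures. Since every semistable bundle of degree $d$ is stable, $\BunGXdss=\BunGXdst$. The morphism \eqref{CMS} is then a $Z_G$-gerbe over the smooth variety $\ModGXdss=\ModGXdst$, with fibre $BZ_G$. Because $Z_G$ is reductive, with identity component $(\C^*)^m$, we have $H^*(BZ_G;\C)\simeq H^*(B(Z_G)_0;\C)\simeq\C[x_1,\dots,x_m]$ with $x_i\in H^{1,1}$. This holds since the finite component group does not contribute rationally. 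By \eqref{classif_stack_max_torus}, $HP_{u,v}(BZ_G)=(1-uv)^{-m}$.

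The first step is surjectivity of the fibre restriction $H^*(\BunGXdss;\C)\to H^*(BZ_G;\C)$. This follows the Atiyah--Bott argument: the generators $x_i$ are obtained as pullbacks of the classes $h_k$, $k\le m$, from \eqref{eqn:Kunneth}, or equivalently from first Chern classes of the line bundles attached to characters of $G$ evaluated on the universal bundle at a point of $X$. These classes are algebraic, hence of type $(1,1)$. So we get algebraic lifts $\tilde x_i\in H^{1,1}(\BunGXdss)$ restricting to the $x_i$.

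The second step applies Leray--Hirsch. The map
$$H^*(\ModGXdss;\C)\otimes H^*(BZ_G;\C)\to H^*(\BunGXdss;\C),\qquad a\otimes x^I\mapsto \pi^*a\cup\tilde x^I,$$
is an isomorphism. It is built from pullback and cup product with classes of pure type $(1,1)$, and by \cite{Deligne_Hodge_III} both operations are morphisms of Hodge structures. Hence it is an isomorphism of Hodge structures, compatible with the $(p,q)$-decompositions. Here we use that $\ModGXdss$ is smooth and projective in this case, so its cohomology is pure, and that the stack's cohomology is pure by the discussion in Section \ref{Hodge_theory_for_stacks}. A bijective morphism of Hodge structures is an isomorphism, so the bigraded dimensions multiply:
$$HP_{u,v}(\BunGXdss)=HP_{u,v}(\ModGXdss)\cdot(1-uv)^{-m}.$$
Multiplying by $(1-uv)^m$ in $\Z[\![u,v]\!]$ gives the claim. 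Since $\ModGXdss$ is a projective variety, the result is a polynomial.

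The main obstacle is the first step in full reductive generality: showing that the lifts exist and are Hodge classes. This amounts to verifying that the gerbe is neutral rationally when $Z_G$ is not a torus. I would handle this by passing to the isogeny $(Z_G)_0\to Z_G$. Its effect is invisible in rational cohomology, so Chern classes of characters of $G$, restricted to $(Z_G)_0$, generate $H^2(B(Z_G)_0;\Q)$ up to a finite index that disappears rationally. The final substitution of the closed formula from Theorem \ref{thm:HP} or the recursion \eqref{recursive_formula_for_HP_series} is then immediate.
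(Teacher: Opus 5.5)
Your proposal is correct and follows essentially the same route as the paper: Leray--Hirsch for the fibration $BZ_G\to\BunGXdss\to\ModGXdss$, compatibility of the resulting isomorphism with Hodge structures via Deligne, and $HP_{u,v}(BZ_G)=(1-uv)^{-m}$. You go slightly further than the paper by actually justifying the fibre surjectivity, which the paper only asserts, using the type-$(1,1)$ classes $h_k$, $k\leq m$. One small slip: the isogeny that makes the restricted characters of $G$ span $H^2(B(Z_G)_0;\Q)$ is $(Z_G)_0\to G^{\ab}$, not $(Z_G)_0\to Z_G$. The latter is a finite-index inclusion, relevant only for $H^*(BZ_G;\Q)=H^*(B(Z_G)_0;\Q)$, and this does not affect the argument.
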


We can then deduce from Theorem \ref{HP_for_GL_r} and Proposition \ref{HP_polyn_of_mod_space} a closed formula for Hodge-Poincaré polynomials of moduli spaces of vector bundles of coprime rank and degree, and the fibres of the determinant map on those spaces. Of course, as noted in \cite{Zagier} and \cite{LR}, it is not clear from the formulas themselves that these expressions are indeed polynomials.

\begin{theorem}\label{HP_pol_mod_spaces_of_vector_bundles}
Let $r\wedge d=1$ and let $\ModXrd$ be the moduli space of semistable vector bundles of rank $r$ and degree $d$ on $X$. Let $\mathscr{L}\in\mathrm{Pic}_X^d$ be a line bundle of degree $d$ and set $\ModXrL:=\det^{-1}(\mathscr{L})$, where $\det:\ModXrd\lra\mathrm{Pic}_X^d$ is the morphism sending a vector bundle $\mathscr{E}$ to the line bundle $\det(\mathscr{E})$. Then the Hodge-Poincaré polynomials of these moduli spaces are given by the following formulas:
\begin{enumerate}
\item $HP_{u,v}(\ModXrd) = $ 
\begin{eqnarray*}
& (1+u)^g(1+v)^g & 
 \sum_{\ell=1}^r (-1)^{\ell-1} \left(\frac{(1+u)^g(1+v)^g} {1-uv}\right)^{\ell-1}
\sum_{\tiny \begin{array}{c}r_1,\,\ldots\,, r_\ell\in\bZ_{>0}\\ r_1+\,\ldots\,+r_\ell = r \end{array}}
\prod_{i=1}^{\ell}  \prod_{k=2}^{r_i}
 \frac{(1+u^k v^{k-1})^g (1+u^{k-1} v^k)^g}{\big(1- (uv)^{k-1}\big)\big(1-(uv)^k\big)}
\\
&& 
\quad 
\times \frac{(uv)^{(g-1)\sum_{i<j} r_i r_j}}{\prod_{i=1}^{\ell-1}(1-(uv)^{r_i+r_{i+1}} ) }
\, (uv)^{\sum_{i=1}^{\ell-1} (r_i+r_{i+1}) \big\langle (r_1+\cdots + r_i)(-\frac{d}{r})\big\rangle}\ .
\end{eqnarray*}
\item $HP_{u,v}(\ModXrL)= $
\begin{eqnarray*}
&& 
\sum_{\ell=1}^r (-1)^{\ell-1} \left(\frac{(1+u)^g(1+v)^g} {1-uv}\right)^{\ell-1}
\sum_{\tiny \begin{array}{c}r_1,\,\ldots\,, r_\ell\in\bZ_{>0}\\ r_1+\,\ldots\,+r_\ell = r \end{array}}
\prod_{i=1}^{\ell}  \prod_{k=2}^{r_i}
 \frac{(1+u^k v^{k-1})^g (1+u^{k-1} v^k)^g}{\big(1- (uv)^{k-1}\big)\big(1-(uv)^k\big)}
\\
&& 
\quad \times \frac{(uv)^{(g-1)\sum_{i<j} r_i r_j} }{\prod_{i=1}^{\ell-1}(1-(uv)^{r_i+r_{i+1}} ) }
\, (uv)^{\sum_{i=1}^{\ell-1} (r_i+r_{i+1}) \big\langle (r_1+\cdots + r_i)(-\frac{d}{r})\big\rangle}\ .
\end{eqnarray*}
\end{enumerate}
\end{theorem}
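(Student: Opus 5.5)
The plan is to reduce both formulas to the $\GL_r(\C)$ case of Theorem \ref{HP_for_GL_r}, using Proposition \ref{HP_polyn_of_mod_space} and a Künneth-type splitting for the determinant map.

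\textbf{Part (1).} Since $r\wedge d=1$, every semistable vector bundle of rank $r$ and degree $d$ is stable. Hence we are in the good case, with $m=\dim Z_{\GL_r(\C)}=1$. Proposition \ref{HP_polyn_of_mod_space} then gives
$$HP_{u,v}(\ModXrd)=(1-uv)\,HP_{u,v}\big(\fBun_{\GL_r(\C)_X}^{\ d,\,ss}\big).$$
Substituting the closed formula of Theorem \ref{HP_for_GL_r}, the factor $\big(\frac{(1+u)^g(1+v)^g}{1-uv}\big)^\ell$ times $(1-uv)$ equals $(1+u)^g(1+v)^g\big(\frac{(1+u)^g(1+v)^g}{1-uv}\big)^{\ell-1}$. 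This is exactly formula (1), so this part is pure bookkeeping.

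\textbf{Part (2).} I would show that
$$HP_{u,v}(\ModXrd)=HP_{u,v}(\mathrm{Pic}^d_X)\cdot HP_{u,v}(\ModXrL), \qquad HP_{u,v}(\mathrm{Pic}^d_X)=(1+u)^g(1+v)^g.$$
Dividing (1) by $(1+u)^g(1+v)^g$ then gives (2). To establish the factorization, I would use the standard finite étale cover
$$\ModXrL\times \mathrm{Pic}^0_X\lra \ModXrd,\qquad (\mathscr{E},\mathscr{M})\lmt \mathscr{E}\otimes\mathscr{M}.$$
This map is the quotient by the group $J[r]$ of $r$-torsion points of the Jacobian, acting diagonally. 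Since it is a finite quotient of smooth varieties, the rational cohomology of $\ModXrd$ is the $J[r]$-invariant part of $H^*(\ModXrL)\otimes H^*(\mathrm{Pic}^0_X)$. The pullback map is a morphism of Hodge structures, so this identification respects the Hodge decomposition.

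\textbf{Main obstacle.} The key step is showing that $J[r]$ acts trivially on $H^*(\ModXrL;\C)$; this is the Harder--Narasimhan result, recovered in Atiyah--Bott. Translation by points of the connected group $\mathrm{Pic}^0_X$ acts trivially on $H^*(\mathrm{Pic}^0_X)$, so the only issue is the action on $H^*(\ModXrL)$. Once it is trivial, the invariant part is the full tensor product, and Künneth compatibility with Hodge structures (\cite[Proposition~8.2.10]{Deligne_Hodge_III}) gives the product formula for Hodge--Poincaré polynomials.

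An alternative route is to apply the closed formula of Theorem \ref{HP_for_GL_r} for $\SL_r(\C)$ and relate the stack of fixed-determinant bundles to $\ModXrL$ through a $\mu_r$-gerbe. Either way, the torsion-action invariance is the essential input I would cite rather than reprove.
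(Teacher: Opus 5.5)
Your proof is correct and is essentially the paper's. Part (1) is the same substitution of Theorem \ref{HP_for_GL_r} into Proposition \ref{HP_polyn_of_mod_space}. For part (2), the paper cites \cite[Proposition~9.7]{AB} for the cohomological triviality of $\det:\ModXrd\lra\Pic_X^d$ and uses \cite{Deligne_Hodge_III} to get $HP_{u,v}(\ModXrL)\,HP_{u,v}(\Pic_X^d)=HP_{u,v}(\ModXrd)$; your $J[r]$-covering argument unpacks exactly that citation, with the triviality of the $J[r]$-action on $H^*(\ModXrL)$ being the Harder--Narasimhan/Atiyah--Bott input behind it.

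Your $\SL_r(\C)$ aside is unnecessary and would need care. $\SL_r(\C)$-bundles have trivial determinant, i.e.\ degree $0$, which is never coprime to $r>1$. One would therefore have to work with the twisted stack of bundles with determinant $\mathscr{L}$ rather than $\fBun_{\SL_r(\C)}$ itself.
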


\begin{proof}
Since we are assuming that $r\wedge d=1$, the formula for $HP_{u,v}(\ModXrd)$ follows directly from Theorem \ref{HP_for_GL_r} and Proposition \ref{HP_polyn_of_mod_space} . Note that we have factored out the product $(1+u)^g(1+v)^g$, which coincides with $HP_{u,v}(\mathrm{Pic}_X^d)$. To obtain the formula for $HP_{u,v}(\ModXrL)$, recall from \cite[Proposition~9.7]{AB} that the fibration $\det: \ModXrd \lra \Pic_X^d$ is cohomologically trivial. So, using \cite{Deligne_Hodge_III}, the Hodge-Poincaré series of the moduli space $\ModXrL:=\det^{-1}(\mathscr{L})$ of semistable vector bundles of rank $r$ and determinant $\mathscr{L}$ satisfies $$HP_{u,v}(\ModXrL)\, HP_{u,v}(\Pic_X^d) = HP_{u,v}(\ModXrd)\,,$$ which immediately gives the second formula.
\end{proof}

\begin{example}\label{example_rank_2_vb_bis}
Let us retake Example \ref{example_rank_2_vb} and consider the moduli variety $\mathscr{M}_X^{2,1}$ of semistable rank $2$ vector bundles of degree $1$ on $X$. From Formula \ref{towards_HP_polyn_of_mod_space} and Proposition \ref{HP_polyn_of_mod_space}, we deduce:
\begin{eqnarray*}
HP_{u,v}\big(\mathscr{M}_X^{2,1}\big) & = & (1-uv)\, HP_{u,v}\left(\fBun_{\GL(2;\C)}^{\, 1,ss}\right) \\ 
& = & (1-uv)\, \left[\frac{(1+u)^g(1+v)^g}{1-uv}\ \frac{(1+u^2v)^g(1+uv^2)^g}{(1-uv)(1-u^2v^2)} - \frac{(uv)^g}{1-u^2v^2}\,\left(\frac{(1+u)^g\,(1+v)^g}{1-uv}\right)^2\right]\,.\\
& = & \frac{(1+u)^g(1+v)^g}{(1-uv)(1-u^2v^2)}\,\left[ \big((1+u^2v)(1+uv^2)\big)^g - \big(uv(1+u)(1+v)\big)^g\right]\\
& = &(1+u)^g\,(1+v)^g\,\sum_{k=0}^{g-1}\big((1+u^2v)(1+uv^2)\big)^{g-1-k}\,\big(uv(1+u)(1+v)\big)^k\, 
\end{eqnarray*} which is indeed a polynomial.
Let us fix a line bundle $\mathscr{L}\lra X$ of degree $1$ over $X$. Using the formula just proved, the fact that $HP_{u,v}(\mathrm{Pic}^1_X) =(1+u)^g\,(1+v)^g$, and the cohomologically trivial fibration $\mathscr{M}_X^{2,1}\lra \mathrm{Pic}^1_X$, we obtain the Hodge-Poincaré polynomial of the moduli space of semistable vector bundles of rank $2$ and determinant $\mathscr{L}$, which in small rank was already computed in \cite[Corollary~5]{Earl_Kirwan}:
\begin{equation}\label{rk2_vector_bundles_with_fixed_det}
HP_{u,v}\big(\mathscr{M}_X^{2,\mathscr{L}}\big) = \sum_{k=0}^{g-1}\big((1+u^2v)(1+uv^2)\big)^{g-1-k}\,\big(uv(1+u)(1+v)\big)^k\,.
\end{equation}
\end{example}

From Theorem \ref{HP_pol_mod_spaces_of_vector_bundles}, we can also compute a specialization of the Hodge-Poincaré polynomial called the $\chi_t$-characteristic and defined by $\chi_t(-) = HP_{-1,t}(-)$. This gives a new proof of the following result due to Earl and Kirwan in \cite[Corollary~6]{Earl_Kirwan}.

\begin{corollary}
Assume $r\wedge d=1$ and let $\mathscr{L}$ be a line bundle of degree $d$ on $X$. Then $HP_{-1,t}(\ModXrd) \equiv 0$ and
$$\chi_t(\ModXrL):=HP_{-1,t}(\ModXrL) = \prod_{k=2}^r \big(1-(-t)^{k-1}\big)^{g-1} \big(1-(-t)^k\big)^{g-1}\,.$$
\end{corollary}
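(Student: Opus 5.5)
The plan is to specialize Theorem \ref{HP_pol_mod_spaces_of_vector_bundles} at $u=-1$, $v=t$. For $\ModXrd$ the claim is immediate: formula (1) carries the overall factor $(1+u)^g(1+v)^g$, which vanishes at $u=-1$. One must first check that the remaining factor has no pole at $u=-1$. Its denominators are products of terms $1-uv$, $1-(uv)^{k}$ and $1-(uv)^{r_i+r_{i+1}}$, which at $u=-1$ become $1-(-t)^k$. These are nonzero rational functions of $t$, so the substitution makes sense in $\Q(t)$, and the whole expression specializes to $0$. Since $HP_{u,v}(\ModXrd)$ is a polynomial, this gives $HP_{-1,t}(\ModXrd)\equiv 0$.

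For $\ModXrL$ I would substitute $u=-1$ into formula (2) term by term. Each summand with $\ell\geq 2$ contains the factor
$$\left(\frac{(1+u)^g(1+v)^g}{1-uv}\right)^{\ell-1},$$
which has a zero of order $g(\ell-1)$ at $u=-1$. The other factors stay finite there. The numerators $(1+u^kv^{k-1})^g(1+u^{k-1}v^k)^g$ are polynomials, and by the remark above the denominators do not vanish, as rational functions of $t$, at $u=-1$. Hence all summands with $\ell\geq 2$ vanish. Only $\ell=1$ survives, with $r_1=r$. In that summand the empty products give $1$, and the exponent $\sum_{i<j}r_ir_j$ is $0$. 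So we get
$$\chi_t(\ModXrL)=\prod_{k=2}^r\frac{(1+(-1)^kt^{k-1})^g(1+(-1)^{k-1}t^k)^g}{(1-(-t)^{k-1})(1-(-t)^k)}.$$

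The remaining step is bookkeeping of signs. We have $1+(-1)^kt^{k-1}=1-(-t)^{k-1}$, because $(-1)^k t^{k-1}=-(-t)^{k-1}$. Similarly $1+(-1)^{k-1}t^k=1-(-t)^k$. Cancelling one power of each factor against the denominator gives
$$\prod_{k=2}^r\big(1-(-t)^{k-1}\big)^{g-1}\big(1-(-t)^k\big)^{g-1},$$
as claimed.

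The only point needing care is justifying term-by-term evaluation at $u=-1$ of an identity in $\Q(u,v)$ whose summands are rational functions. This is legitimate because the specialization $u\mapsto -1$ is a ring homomorphism on the localization of $\Q[u,v]$ at the denominators that occur. None of these denominators, namely $1-(uv)^n$ with $n\geq 1$, lies in the ideal $(u+1)$. The left-hand side is a polynomial, so its specialization is the genuine $\chi_t$.
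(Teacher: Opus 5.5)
Your proof is correct and follows the paper's argument. You specialize Theorem \ref{HP_pol_mod_spaces_of_vector_bundles} at $u=-1$, use the factor $(1+u)^g$ to kill formula (1) and every $\ell\geq 2$ summand of formula (2), and simplify the surviving $\ell=1$ term to $\prod_{k=2}^r(1-(-t)^{k-1})^{g-1}(1-(-t)^k)^{g-1}$. Your check that evaluating at $u=-1$ is well defined on the localization of $\Q[u,v]$ at the denominators $1-(uv)^n$ makes explicit a step the paper leaves implicit.
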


\begin{proof}
The result follows directly from Theorem \ref{HP_pol_mod_spaces_of_vector_bundles}. For the first one, the product $(1+u)^g(1+v)^g$ is $0$ when $u=-1$. And for the second one, the only non-vanishing term in the sum is for $\ell=1$. So $r_1=r$ and the rest follows.
\end{proof}

As a consequence, one recovers the following results, the first one being due to Narasimhan-Ramanan in \cite{NaRa75} and the second to Earl-Kirwan in \cite{Earl_Kirwan}.

\begin{corollary}
Assume $r\wedge d=1$ and let $\mathscr{L}$ be a line bundle of degree $d$ on $X$. Then the Euler characteristic and signature of $\ModXrL$ are both zero.
\end{corollary}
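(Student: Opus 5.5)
The plan is to specialize the $\chi_t$-characteristic computed in the preceding corollary at $t=\pm1$. I assume $r\geq 2$ throughout: for $r=1$ the space $\ModXrL$ is a point and the statement fails. Since $r\wedge d=1$, every semistable bundle is stable, so $\ModXrL$ is a smooth projective variety. Its cohomology therefore carries a pure Hodge structure with $h^{p,q}=h^{q,p}$, and $HP_{u,v}(\ModXrL)$ is a genuine polynomial in $u,v$. This lets us evaluate it at $u,v=\pm1$.

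\emph{Euler characteristic.} By definition of the Hodge-Poincar\'e polynomial, the topological Euler characteristic is
$$e(\ModXrL)=\sum_{p,q}(-1)^{p+q}h^{p,q}=HP_{-1,-1}(\ModXrL)=\chi_{-1}(\ModXrL).$$
Substitute $t=-1$ into $\prod_{k=2}^r (1-(-t)^{k-1})^{g-1}(1-(-t)^k)^{g-1}$. The factor with $k=2$ contains $(1-1)^{g-1}$, which vanishes because $g\geq 2$. Hence $e(\ModXrL)=0$.

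\emph{Signature.} If the complex dimension $(r^2-1)(g-1)$ is odd, the signature is zero by convention. If it is even, the Hodge index theorem for compact K\"ahler manifolds gives
$$\sigma(\ModXrL)=\sum_{p,q}(-1)^q h^{p,q}.$$
Using the symmetry $h^{p,q}=h^{q,p}$, this equals $\sum_{p,q}(-1)^p h^{p,q}=HP_{-1,1}(\ModXrL)=\chi_1(\ModXrL)$. Substitute $t=1$. For $k=2$ the factor $(1-(-1)^2)^{g-1}=0^{g-1}$ appears, again zero since $g\geq2$. Hence $\sigma(\ModXrL)=0$.

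The only step needing care is to justify specializing the closed formula at $u=-1$ and $v=\pm1$. The closed formula is a priori only an identity in $\Q(u,v)$, and several summands have denominators $1-(uv)^k$ that vanish when $uv=1$. The way around this is to specialize only the product form of $\chi_t$ from the previous corollary, where the cancellation has already been carried out: it is an honest polynomial in $t$, so no poles arise. The remaining point is to identify $\sigma$ with $\chi_1$ via the Hodge index theorem, which uses that $\ModXrL$ is smooth and projective in the coprime case.
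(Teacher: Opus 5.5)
Your proof is correct and takes the same route as the paper: you evaluate the $\chi_t$-characteristic $\prod_{k=2}^r (1-(-t)^{k-1})^{g-1}(1-(-t)^k)^{g-1}$ at $t=-1$ to get the Euler characteristic, and at $t=1$, via the Hodge index theorem, to get the signature. Your extra points are sound refinements that the paper leaves implicit: the need for $r\geq 2$, the trivial odd-dimensional case, and specializing the already-simplified polynomial rather than the rational closed formula with its poles at $uv=1$.
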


\begin{proof}
The Euler characteristic of $\ModXrL$ is $$\chi(\ModXrL)=P_{-1}(\ModXrL)=HP_{-1,-1}(\ModXrL)=\chi_{-1}(\ModXrL) = 0$$ and, by the Hodge index theorem, its signature is $$\sigma(M) = HP_{-1,1}(\ModXrL) = \chi_1(\ModXrL)=0\,,$$ which concludes the proof.
\end{proof}


\def\cprime{$'$}

\end{document}